\documentclass[11pt]{amsart}
\usepackage[T1]{fontenc}
\usepackage{lmodern}
\usepackage[margin=1in]{geometry}
\usepackage{microtype}
\usepackage{tabularx}
\usepackage{xcolor}
\usepackage[inline]{enumitem}
\usepackage[authoryear,round]{natbib}
\usepackage{hyperref}
\usepackage{cxcase}
\cxtheorem{aim-problem-35}{thm:common-A}
\cxverbatim{aim-problem-35}
\cxtheorem{aim-problem-38}{thm:pot}
\cxverbatim{aim-problem-38}
\cxtheorem{aim-problem-36}{thm:aim36}
\cxverbatim{aim-problem-36}
\cxtheorem{aim-problem-37}{thm:aim37}
\cxverbatim{aim-problem-37}

\cxtheorem{macdonald-schur-convexity}{thm:macdonald}
\cxverbatim{macdonald-schur-convexity}

\cxtheorem{theta-derivative-log-concavity}{thm:theta}
\cxverbatim{theta-derivative-log-concavity}

\cxtheorem{variance-only-matrix-discrepancy}{thm:discrepancy}
\cxverbatim{variance-only-matrix-discrepancy}

\cxtheorem{mixed-norm-general-s}{thm:mixed-general}
\cxverbatim{mixed-norm-general-s}
\cxtheorem{yufei-psd}{thm:mixed}
\cxparaphrase{yufei-psd}

\cxtheorem{quantum-coupon-collector}{thm:quantum-coupon-collector}
\cxverbatim{quantum-coupon-collector}

\cxtheorem{odonnell-matrix-conjecture}{thm:odonnell-matrix-conjecture}
\cxparaphrase{odonnell-matrix-conjecture}

\cxtheorem{courtade-volume-conjecture}{thm:courtade-volume-conjecture}
\cxverbatim{courtade-volume-conjecture}

\cxtheorem{dpp-feasible-step}{thm:dpp}
\cxverbatim{dpp-feasible-step}

\cxtheorem{log-volume-distance}{thm:volume-distance}
\cxparaphrase{log-volume-distance}
\cxtheorem{lorentzian-jensen}{thm:lorentzian}
\cxparaphrase{lorentzian-jensen}

\cxtheorem{sdd-nystrom-diminishing-returns}{thm:sdd-nystrom-diminishing-returns}
\cxverbatim{sdd-nystrom-diminishing-returns}

\cxtheorem{osi-sketch-and-solve}{thm:osi-sketch-and-solve}
\cxverbatim{osi-sketch-and-solve}

\cxtheorem{hamiltonian-nepv-identity}{thm:hamiltonian-nepv-identity}
\cxverbatim{hamiltonian-nepv-identity}

\cxtheorem{qrcp-orthonormal-greedy}{thm:qrcp-orthonormal-greedy}
\cxverbatim{qrcp-orthonormal-greedy}
\usepackage{listings}
\usepackage{graphicx}
\usepackage{url}
\usepackage{multirow}

\AtBeginDocument{\let\cite\citep}
\hypersetup{colorlinks=true,linkcolor=blue!45!black,citecolor=blue!45!black,urlcolor=red!65!black}
\setlist{nosep,leftmargin=*}
\makeatletter
\patchcmd{\section}{\scshape}{\fontfamily{ppl}\bfseries\scshape\selectfont}{}{%
  \PackageWarning{main}{Could not bold the section font; amsart may have changed}}
\makeatother

\numberwithin{equation}{section}   %

\title{GPT, the Counterexample Machine}
\author[Count ex Machina]{Count ex Machina\textsuperscript{\normalfont*}}
\thanks{*\,Project name used to future proof against possible multiple contributors. The first version is authored by Suvrit Sra (\texttt{s.sra@tum.de}; TU Munich, Dept.~of Math, School of CIT.)}
\date{Draft of August 9, 2026~~(see {\fontfamily{ppl}\selectfont\itshape\href{https://github.com/suvrit/count-ex-machina}{https://github.com/suvrit/count-ex-machina}}~for the whole timeline)}
\subjclass[2020]{Primary 68T01, 00A30; Secondary 05E05, 15A42, 26D10, 60E15, 68W40}
\keywords{AI-assisted mathematics, counterexamples, real stable polynomials, Schur positivity, matrix inequalities, determinantal point processes, reproducible mathematics}

\begin{document}
\begin{abstract}
  This document reports over 15 counterexamples found using GPT Pro over the course of 12 months, for problems in combinatorics, number theory, convexity, analysis, and other areas. The counterexamples concern both (reasonably) well-known problems as well as lesser known instances. The counterexamples are accompanied by the repo \href{https://github.com/suvrit/count-ex-machina}{\fontfamily{ppl}\selectfont\itshape https://github.com/suvrit/count-ex-machina}, and we warmly welcome new contributions toward helping building a rich \emph{``open-source countex library.''}
\end{abstract}
\maketitle

% ==== tex/01-literature.tex ====
\section{Introduction}
\noindent AI assisted mathematics has literally exploded; as of writing this document, the website \href{https://aimath.robertj1.com}{[AIMath]} lists 438 as of Aug.~9, 2026 (and 506 as of Aug.~25, 2026!) proofs and disproofs of a variety of mathematical questions. Our focus in this manuscript is on counterexamples that refute a conjecture or provide a disproof of some problem, providing both a human friendly proof and a machine-verifiable certificate.

\vskip5pt
\noindent\textbf{\emph{Why counterxamples?}} The typical asymmetry between proving a claim (universal quantification) versus disproving (existence of one counterexample) makes it particularly attractive to prioritize AI-assisted search. And available AI assistants have been strikingly successful in 2026 at constructing counterexample. Ten noteworthy conjectured claimed to be disproved by AI in 2026 include:
\vskip5pt
\begin{small}
  \begin{enumerate}[label=(\roman*),itemsep=0pt]
  \item the Jacobian conjecture in 3 dimensions by an explicit Keller map~\citep{Alpoge2026}; %
  \item the Erd\H{o}s unit-distance conjecture, disproved at~\citet{OpenAIUnitDistance2026}; with an explicit exponent by \citet{Sawin2026}, and a human-verified digest by \citet{UnitDistanceRemarks2026};
  \item the Gaussian completely monotone conjecture \citep{GuSellke2026};
  \item conjectured log-convexity of Fisher information along heat flow \citep{ZouEtAl2026};
  \item Litvak's Gaussian-minima conjecture \citep{Kunisky2026};
  \item the pairwise-independent correlation gap \citep{RamachandraNatarajan2026};
  \item near-quadratic Elekes--R\'onyai expansion \citep{Liu2026};
  \item the Gaussian moments conjecture \citep{LongGaussian2026};
  \item the $xz$- and Mathieu conjectures for $SU(2)$ \citep{LongSU22026}; and
  \item conjectured Schur positivity of claw-free chromatic symmetric functions
    \citep{MatherneMorales2026}.
  \end{enumerate}
\end{small}

\vskip5pt
But refutation is not the only aspect the models have been exemplary at: the document~\citep{OpenAITenAdvances2026}  includes 10 results obtained by OpenAI's internal Astra model, with several results accompanied by machine-checkable Lean certificates.\footnote{Allegedly, as of this writing, the claimed disproof of the Connes rigidity conjecture is invalid, because the AI system silently used a group with an infinite center, which violates the conjecture's framing.}
And the noted collection \href{https://aimath.robertj1.com}{[AIMath]} lists several successful proofs. Though a more mixed set of results from problems designed to challenge AI approaches is in the  ``First Proof'' \citep{abouzaid2026proof} project, where it seems so that AI has not yet managed to make breakthroughs. Continuing the topic of conjectures and their mechanical verification, this new project seems like a wonderful resource~\citep{FormalConjecturesPaper2026}.

\vskip5pt
\noindent\emph{\textbf{\textcolor{darkblue}{Our Philosophy.}}} First, a short word on our \emph{raison d'être}. This work started with the author's effort in collecting some of the AI-assisted disproofs we found over the past 12 months. Importantly, we  wanted our exposition of the collected results to be human-readable and human-verifiable, \textbf{\emph{while honestly crediting AI for the heavy-lifting}};\footnote{For instance, it is quite unclear from numerous new (clearly AI-assisted) works appearing on the arXiv whether humans are fairly acknowledging AI's role; our repository is dedicated to collecting results that can be largely attributed to AI, though without trying to be either philosophic or pedantic about ``who did what?''}. In some cases, we share refinements that further enrich the discussion. We supplement the exposition with source code that can be used to machine-verify counterexamples. Though ultimately, it is for the joy of understanding mathematics, of proving and disproving claims, and of sharing progress with others that underscores our motivation; the machine-checkable component is there to support this ideal, beyond mere pedantic necessity.

% ==== end tex/01-literature.tex ====
% ==== tex/02-admission.tex ====
\section{Summary of the counterexamples and resolutions}
Some simple criteria for including counterexamples emerged when preparing this manuscript. We hope to use these criteria as a rough yardstick for gauging new contributions to the collection. In particular, we propose the following simple criteria:
\begin{definition}[\textbf{\textcolor{darkblue}{Admission criteria}}]
\begin{enumerate}
\item The refuted statement appeared publicly before the counterexample was sought (e.g., in a paper or preprint, a problem list, a talk, or in a public post, etc.). The public appearance condition excludes statements invented in order to be refuted.
\item The original hypotheses and quantifiers are fully preserved.  Counterexample to a strengthening or any modification of the original problem are not admitted as refutations.\footnote{It may well be that a modified statement is the more ``natural'' one, but given the automation involved, to be pedantically correct, we cannot admit any changes, unless the modified version itself satisfies condition~1.}
\item The refutation is independently verifiable, through exact arithmetic, an interval certificate, an analytic calculation, proof assistant, etc.
\end{enumerate}
\end{definition}

\noindent\textbf{Note about problems.} Most of the statements studied in the current manuscript involve problems that are likely well-known to specialists but not well-known more broadly. Our collection is open to everybody for contributions, as long as the contributions meet the minimalistic criteria listed above. Moreover, we neither enforce nor advocate any particular verifier, but we do expect every contributor to ensure verification. This manuscript is accompanied by a supporting git repo: \href{https://github.com/suvrit/count-ex-machina}{\fontfamily{ppl}\selectfont\itshape https://github.com/suvrit/count-ex-machina}, which is a living repo and will be the definitive resource on any bugfixes, improvements, additions or otherwise, and we welcome contributions. %

% ==== end tex/02-admission.tex ====

% ==== tex/ledger.tex ====

\newlength{\cxnumwd}\setlength{\cxnumwd}{2.1em}
\newlength{\cxnumsep}\setlength{\cxnumsep}{0.65em}
\setlength{\LTleft}{-\dimexpr\cxnumwd+\cxnumsep\relax}
\setlength{\LTright}{\fill}
\setcounter{cxrow}{0}   %
\begin{longtable}{@{}>{\raggedleft\arraybackslash}p{\cxnumwd}@{\hspace{\cxnumsep}}p{0.46\textwidth}p{0.50\textwidth}@{}}
  \caption{Summary of counterexamples. \ec{} denotes \emph{Exact Certificate}; \ic{} denotes \emph{Interval Certificate}.}\label{tab:ledger}\\
  \cmidrule[\heavyrulewidth]{2-3}
  & Problem statement & Certificate(s) \\
  \cmidrule{2-3}
  \endfirsthead
  \cmidrule[\heavyrulewidth]{2-3}
  & Problem statement & Certificate(s) \\
  \cmidrule{2-3}
  \endhead
  \cxn & Borcea--Br\"and\'en Problem 35: existence of Johnson polynomial representation for stable polynomials \citep{AIMStableProblems} & Explicit polynomial (deg-5) with immanantal obstruction.\ \ec; human proof Thm.~\ref{thm:common-A}.\\
  \cxn & Borcea--Br\"and\'en Problem 38: Stable polynomial Permanent-On-Top \citep{AIMStableProblems} & Exact Schur-basis expansion; four violating partitions.\ \ec; human proof Thm.~\ref{thm:pot}.\\
  \cxn & Borcea--Br\"and\'en Problem 36: Schur positivity of symmetric stable polynomials \citeyearpar{AIMStableProblems} & Determinantal polynomial in Schur-basis with $[s_{(1^5)}]q=-2972$.\ \ec; human proof Thm.~\ref{thm:aim36}.\\
  \cxn & Borcea--Br\"and\'en Problem 37: stable Schur inequality (lower endpoint) \citeyearpar{AIMStableProblems} & Schur-\emph{positive} member of the same family with $a_{(1^5)}<f^{(1^5)}a_{(5)}$.\ \ec; human proof Thm.~\ref{thm:aim37}.\\
  \cxn & \citet[][Theorem 2.1]{McSwiggenSahi2026}: Claimed log- and Schur-convexity of a ratio of Macdonald polynomials & Failure of Schur (and thus also log-) convexity (rank 2).\ \ec; human proof Thm.~\ref{thm:macdonald}.\\
  \cxn & \citep[][Conj.~2.5]{coffey2013log} Turán inequalities for Jacobi-Theta derivatives $\Phi^{(n)}$ & Rigorous interval evaluation for $n=9$.\ \ic; (reduction in Thm.~\ref{thm:theta}).\\
  \cxn & Variance-sensitive Matrix Spencer~\cite[Prob.~4.25]{Bandeira2016} & Diagonal family with discrepancy $\Theta(\sqrt{\log n})$.\ \ec; human proof Thm.~\ref{thm:discrepancy}.\\
  \cxn & Mixed-norm $\|A^TB\|_{p,q}^2\le \|A^TA\|_{p,p}\|B^TB\|_{q,q}$ for $1\le p\le q$~\citep{SahSawhneyStonerZhao2020} & Explicit analytic disproof; \ec; human proof Thm.~\ref{thm:mixed-general}.\\
  \cxn & The same question at $B=A$: $\|Z\|_{s,q}^2\le\|Z\|_{s,s}\|Z\|_{q,q}$ for completely positive $Z$ & Explicit disproof via a rank-2 $21\times 21$ PSD matrix;\ \ic; no closed-form disproof;~\ref{thm:mixed}).\\
  \cxn & ``Quantum'' coupon-collector: $\sum_{\emptyset\ne S}(-1)^{|S|-1}X_S^{-1}\stackrel{?}{\succ} 0$~\citep{Sra2017QuantumCoupon}. & Disproof via rational $3\times3$ matrices with $n=6$;\ \ec; human proof Thm.~\ref{thm:quantum-coupon-collector}.\\
  \cxn & \citet{ODonnell2015AlmostDiagonal} $\|A-D\|_1^2 \stackrel{?}{\le}C\|\lambda(A)-D\|_1$, where $A$ is PSD and $D=\Diag(A)$. & Givens-rotation family with a rational lower-bound; \ec; human proof Thm.~\ref{thm:odonnell-matrix-conjecture}.\\

  \cmidrule{2-3}\\
  & \multicolumn{2}{c}{\textsc{Additional Examples (relegated to the Appendix)}}\\[5pt]
  \cmidrule{2-3}\\

  \cxn & \citet{Courtade2017Concavity}: $V(K{+}L{+}B)^{1/d}V(B)^{1/d}+V(K)^{1/d}V(L)^{1/d}\le V(K{+}B)^{1/d}V(L{+}B)^{1/d}$. & Disproof in $\R^4$: zonotope plus two segments; \ec; human proof Thm.~\ref{thm:courtade-volume-conjecture}.\\
  \cxn & \citet{MarietSra2015}'s feasible-step for a Picard iteration for DPP likelihood ascent.  & Exact $2\times2$ feasible update violating likelihood ascent.\ \ec; human proof Thm.~\ref{thm:dpp}.\\
  \cxn & The divergence $\log\frac{V[(X+Y)/2]}{\sqrt{V(X)V(Y)}}$ is a squared distance for convex bodies~\citep{sra2013}. & Cubic realizable as a volume polynomial; violates triangle inequality.\ \ec; human proof Thm.~\ref{thm:volume-distance}.\\
  \cxn & Lorentzian Jensen--Bregman metrics: $\sqrt{\J_G}$ a semimetric for all homogeneous Lorentzian $G$. & Disproof via same cubic viewed as a Lorentzian polynomial.\ \ec; human proof Cor.~\ref{thm:lorentzian}.\\
  \cxn & An SDD matrix has a marginal Nystr\"om error reduction\citep[Prob.~4.6(b)]{AmselEtAl2026}.  & Disproof via an integer $4\times4$ matrix.\ \ec; human proof Thm.~\ref{thm:sdd-nystrom-diminishing-returns}.\\
  \cmidrule[\heavyrulewidth]{2-3}
\end{longtable}
\setlength{\LTleft}{\fill}  %
% ==== end tex/ledger.tex ====

% ==== tex/cases.tex ====

% ==== counterexamples/aim-problems/case.tex ====

\cxtitle{Borcea--Br\"and\'en AIM problems}

\cxsubtitle{Problems 35 and 38: mixed-determinant representation and Permanent-On-Top}

\begin{cxcredits}
\posedby{Borcea and Br\"and\'en \citeyearpar{AIMStableProblems}}
\foundby{GPT-5 (Pro)}{2026-01-10}
\formalizedby{S.~Sra}
\auditedby{S.~Sra}
\contributedby{S.~Sra}
\end{cxcredits}

\begin{cxcontext}
A polynomial $f\in\R[z_1,\ldots,z_n]$ is \emph{real stable} if $f(z_1,\ldots,z_n)\ne0$ whenever every $z_i$ has strictly positive imaginary part.  A homogeneous symmetric polynomial of degree $d$ expands uniquely in the basis of Schur polynomials as $p=\sum_{\lambda\vdash d}a_\lambda s_\lambda$; we say $p$ is \emph{Schur-positive} if every $a_\lambda\ge0$.  For a partition $\lambda\vdash d$ we write $f^\lambda$ for the number of standard Young tableaux of shape $\lambda$, $\lambda'$ for the conjugate partition, and, for a $d\times d$ matrix $A=(a_{ij})$,
\[
\Imm_\lambda(A) :=\sum_{\sigma\in S_d}\chi^\lambda(\sigma)\prod_{i=1}^da_{i\sigma(i)},
\]
for the immanant induced by the irreducible character $\chi^\lambda$ of $S_d$. The two extreme cases are $\Imm_{(1^d)}=\det$ and $\Imm_{(d)}=\per$.  Next, for $n\times n$ matrices $A_1,\ldots,A_m$ the \emph{mixed-determinant} is
\[
\eta(A_1,\ldots,A_m) :=\sum_{(S_1,\ldots,S_m)}\det(A_1[S_1])\cdots\det(A_m[S_m]),
\]
where the sum runs over all ordered partitions of $\{1,\ldots,n\}$ into disjoint sets $S_1,\ldots,S_m$, and $A[S]$ denotes the principal submatrix of $A$ with rows and columns in $S$.  When $A\succeq0$ the polynomial $\eta(z_1A,\ldots,z_nA)$ is symmetric, homogeneous, real stable, and has the (Schur-positive) representation
\begin{equation*}
  \eta(z_1A,\ldots,z_nA)=\sum_{\lambda\vdash n}\Imm_{\lambda'}(A)s_\lambda(z_1,\ldots,z_n).
\end{equation*}
The two problems below ask how far those properties go the other way.
\end{cxcontext}

\begin{cxsource}{aim-problem-35}
Borcea--Br\"and\'en, AIM problem list \citeyearpar{AIMStableProblems}, Problem 35
\end{cxsource}

\begin{cxstatement}{aim-problem-35}
Characterize all symmetric, real stable and homogeneous polynomial of degree $d$ in $n$ variables.  Is it in fact so that if $p$ is such a polynomial, then there is a $d\times d$ positive semidefinite $d\times d$ matrix $A$ such that
\[p(z_1,\ldots,z_n)=\eta(z_1A,\ldots,z_nA)?\]
This is not obvious even for $n=2$.
\end{cxstatement}

\begin{cxsummary}{aim-problem-35}
Borcea--Br\"and\'en Problem 35: common-matrix mixed-determinant representation \citeyearpar{AIMStableProblems}
\end{cxsummary}

\begin{cxcertificate}{aim-problem-35}
Stable degree-five polynomial plus size-five immanant POT obstruction.
\end{cxcertificate}

\begin{cxsource}{aim-problem-38}
Borcea--Br\"and\'en, AIM problem list \citeyearpar{AIMStableProblems}, Problem 38
\end{cxsource}

\begin{cxstatement}{aim-problem-38}
Lieb's Permanent-On-Top (POT) Conjecture for the symmetric group on $d$ elements [J,L] asserts that if $A$ is a $d\times d$ positive semi-definite matrix then $\Imm_\lambda(A)\le f^\lambda\per(A)$ for all $\lambda\vdash d$.  An affirmative answer to the following problem would in particular imply the validity of the POT Conjecture.

\emph{Problem 38.}  Let $p$ be as in Problem 36.  Is $a_\lambda\le f^\lambda a_{(1^d)}$ for all $\lambda\vdash d$?
\end{cxstatement}

\begin{cxsummary}{aim-problem-38}
Borcea--Br\"and\'en Problem 38: Stable Permanent-On-Top \citeyearpar{AIMStableProblems}
\end{cxsummary}

\begin{cxcertificate}{aim-problem-38}
Exact Schur expansion; four violating partitions.
\end{cxcertificate}

\begin{cxrefutation}
We answers to the above problems are negative. Let $S=x_1+\cdots+x_5$ and define the polynomial
\begin{equation}\label{eq:stable-p}
p(x_1,\ldots,x_5)=\prod_{i=1}^5\Bigl(x_i+5\nlsum_{j\ne i}x_j\Bigr)=\prod_{i=1}^5(5S-4x_i).
\end{equation}
Every factor has strictly positive coefficients, hence has positive imaginary part whenever all variables do; therefore $p$ is real stable. Moreover, $p$ is symmetric, homogeneous of degree five in five variables, and has strictly positive monomial coefficients. %

\begin{theorem}[\statusfalse: Stable Permanent-On-Top, AIM Problem 38]\label{thm:pot}
The polynomial \eqref{eq:stable-p} satisfies all hypotheses of Problem~\ref{ctx:aim-problem-38} but violates $a_\lambda\le f^\lambda a_{(1^5)}$.
\end{theorem}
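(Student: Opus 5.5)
The plan is to compute the Schur expansion of \eqref{eq:stable-p} exactly by passing through the elementary symmetric basis, and then to exhibit one partition $\lambda\vdash5$ with $a_\lambda>f^\lambda a_{(1^5)}$. The hypotheses of Problem~\ref{ctx:aim-problem-38} (those of Problem 36: symmetric, homogeneous of degree $d=5$ in $n=5$ variables, real stable, with nonnegative coefficients) were already checked in the paragraph preceding the theorem. So the whole content of the proof is the inequality.

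\emph{Step 1: reduction to elementary symmetric functions.} Put $t=5S=5e_1$. Then
\[
p=\prod_{i=1}^5(t-4x_i)=\sum_{k=0}^5(-4)^k e_k\,t^{5-k}=\sum_{k=0}^5(-4)^k5^{5-k}\,e_1^{5-k}e_k .
\]
\emph{Step 2: Schur expansion via Pieri.} Since $e_k=s_{(1^k)}$, the coefficient of $s_\lambda$ in $e_1^{5-k}e_k$ equals $f^{\lambda/(1^k)}$. This is because multiplying by $e_1$ adds one box at a time, and each product is obtained by iterating Pieri's rule. Hence
\[
a_\lambda=\sum_{k=0}^5(-4)^k5^{5-k}f^{\lambda/(1^k)},
\]
with the convention that $f^{\lambda/(1^k)}=0$ when $(1^k)\not\subseteq\lambda$.

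\emph{Step 3: the two endpoints.} For $\lambda=(1^5)$, every skew shape $(1^5)/(1^k)$ is a single column, so each $f^{(1^5)/(1^k)}=1$. The binomial theorem then gives $a_{(1^5)}=(5-4)^5=1$. For $\lambda=(5)$, only $k=0$ and $k=1$ contribute, each with $f=1$. Therefore $a_{(5)}=5^5-4\cdot5^4=625$. Since $f^{(5)}=1$, we get $a_{(5)}=625>1=f^{(5)}a_{(1^5)}$, which violates the inequality of Problem~38.

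To match the ledger's ``four violating partitions'', I would also tabulate $a_\lambda$ for the remaining five partitions of $5$. This only requires the small numbers $f^{\lambda/(1^k)}$, which can be read off from hook shapes and short skew diagrams, and the values can be cross-checked against the exact certificate. The only delicate point is the bookkeeping of the skew tableau counts in Step~2. The single violation at $\lambda=(5)$, however, already proves the theorem and needs none of that bookkeeping.
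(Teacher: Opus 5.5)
\textbf{There is a genuine gap, and it sits in your Step 3.} Steps 1 and 2 are correct. The formula $a_\lambda=\sum_{k=0}^5(-4)^k5^{5-k}f^{\lambda/(1^k)}$ is right, and so is $f^{(1^5)/(1^k)}=1$ for every $k$.

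The evaluation of the resulting sum is wrong. The sum $\sum_{k=0}^5(-4)^k5^{5-k}$ has no binomial coefficients. The binomial theorem would need factors $\binom5k$, which arise only if you specialize $e_k$ at $(1,\ldots,1)$; they do not appear in the coefficient of $s_{(1^5)}$. What you have is a geometric sum:
\[
\sum_{k=0}^5(-4)^k5^{5-k}=\frac{5^6-4^6}{9}=3125-2500+2000-1600+1280-1024=1281.
\]
So $a_{(1^5)}=1281$, in agreement with the paper's expansion. At $\lambda=(5)$ you then get $a_{(5)}=625\le1281=f^{(5)}a_{(1^5)}$, so the inequality \emph{holds} there and your claimed violation is false. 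The same is true at $(4,1)$, where $4500\le4\cdot1281=5124$.

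A consistency check would have caught this. With $a_{(1^5)}=1$, every $\lambda\ne(1^5)$ would violate the inequality, not the four partitions recorded in the ledger.

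\textbf{Repair.} The fix is exactly the bookkeeping you set aside. Since $f^{\lambda/(1)}=f^\lambda$, the $k=0$ and $k=1$ terms combine to $625f^\lambda$. For $\lambda=(3,2)$ you have $f^{(3,2)}=5$, $f^{(3,2)/(1,1)}=2$, and no terms with $k\ge3$. This gives
\[
a_{(3,2)}=625\cdot5+2000\cdot2=7125>6405=5\cdot1281,
\]
which is the violation the paper exhibits.

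With that correction, your route is a genuinely different derivation from the paper's. You expand in the $e$-basis and apply Pieri; the paper expands in monomials and inverts the Kostka matrix. Your version is cleaner and reproduces the paper's full Schur expansion. For example, $a_{(3,1,1)}=625\cdot6+2000\cdot3-1600\cdot1=8150$ and $a_{(2,1,1,1)}=625\cdot4+2000\cdot3-1600\cdot2+1280=6580$. But as written, the proof rests on a witness that does not violate the inequality.
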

\begin{proof}
A short explicit calculation shows that $p$ has the Schur expansion
\begin{align*}
p={}&625s_{(5)}+4500s_{(4,1)}+7125s_{(3,2)}+8150s_{(3,1,1)}+7525s_{(2,2,1)}+6580s_{(2,1,1,1)}+1281s_{(1^5)}.
\end{align*}
For $\lambda=(3,2)$, the hook-length formula gives $f^{(3,2)}=5$, whereas
\[
7125>5\cdot1281=6405.
\]
The same upper endpoint fails for $(3,1,1)$, $(2,2,1)$, and $(2,1,1,1)$.  The accompanying source package reconstructs the monomial expansion and inverts the Kostka matrix over $\mathbb Z$.
\end{proof}

The polynomial also has a diagonal positive-definite mixed-determinant representation.  For $j=1,\ldots,5$, let $D_j$ denote $5I$ except for the $j$-th entry being just 1.
Then the multivariate Johnson polynomial satisfies
\[
\eta(x_1D_1,\ldots,x_5D_5)=p(x_1,\ldots,x_5).
\]

\begin{theorem}[\statusfalse: common-matrix representation, AIM Problem 35]\label{thm:common-A}
There is no $5\times5$ Hermitian positive semidefinite matrix $A$ such that $p(x_1,\ldots,x_5)=\eta(x_1A,\ldots,x_5A)$.
\end{theorem}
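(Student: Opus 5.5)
Suppose, for contradiction, that $A\succeq0$ is a $5\times5$ Hermitian matrix with $p(x_1,\ldots,x_5)=\eta(x_1A,\ldots,x_5A)$. I will compare Schur coefficients on both sides and then invoke a known case of the permanent-on-top inequality for immanants.

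The first step is to use the Schur-positive representation from the context,
\[
\eta(z_1A,\ldots,z_5A)=\sum_{\lambda\vdash5}\Imm_{\lambda'}(A)\,s_\lambda(z_1,\ldots,z_5).
\]
Schur expansions are unique, so matching coefficients with the explicit expansion in the proof of Theorem~\ref{thm:pot} gives $\Imm_{\lambda'}(A)=a_\lambda$ for every $\lambda\vdash5$. In particular:
\begin{itemize}
\item $\per(A)=\Imm_{(5)}(A)=a_{(1^5)}=1281$;
\item $\det(A)=a_{(5)}=625$;
\item for $\lambda=(3,2)$, whose conjugate is $\lambda'=(2,2,1)$, we get $\Imm_{(2,2,1)}(A)=a_{(3,2)}=7125$;
\item similarly, $\Imm_{(3,2)}(A)=a_{(2,2,1)}=7525$ and $\Imm_{(2,1,1,1)}(A)=a_{(4,1)}=4500$.
\end{itemize}
The Hermitian case needs a short remark, because the quoted expansion is stated for real PSD $A$. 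The mixed-determinant identity and the character expansion are purely algebraic in the entries of $A$, so they hold verbatim for Hermitian $A$.

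The second step is to produce the obstruction. The Permanent-On-Top inequality $\Imm_\mu(A)\le f^\mu\per(A)$ is false in general, so I cannot invoke it for an arbitrary $\mu$. Instead I will use cases that are proved, or that can be checked at size $5$:
\begin{itemize}
\item Normalised immanant inequalities of Schur type, $\Imm_\mu(A)/f^\mu\ge\det(A)$, which hold for all $\mu$ by Schur's theorem. These only give lower bounds.
\item Upper bounds via Lieb's theorem for $\mu$ a hook, or the known $d\le 5$ cases. Pate and others verified POT-type bounds $\Imm_\mu(A)\le f^\mu\per(A)$ for small $d$ and for hook and two-row shapes. Shchesnovich's counterexamples to POT appear only at larger sizes, so I expect that for $d=5$ the inequality holds for all $\mu$.
\end{itemize}
With $\mu=(2,2,1)$ and $f^{(2,2,1)}=5$, the bound would require $7125\le 5\cdot1281=6405$, which fails. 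This contradiction finishes the proof.

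The main obstacle is citing a rigorous, currently accepted proof of $\Imm_{(2,2,1)}(A)\le 5\per(A)$ for $5\times5$ PSD $A$, or of some weaker immanant inequality that still breaks. If I cannot locate such a reference, I would fall back on the hook shape $\mu=(2,1,1,1)$, since POT is known for hooks (Lieb/Pate). That gives the bound $4500\le 4\cdot1281=5124$, which holds, so the hook case is not violated and I would need another constraint. A second option is the direct monomial structure. The coefficient of $x_1x_2x_3x_4x_5$ in $\eta(x_iA)$ is $\per(A)$ times... (an honest variant: it equals $\sum_\sigma\prod a_{i\sigma(i)}$ weighted by cycle-type counts). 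The coefficients of $x_1^5$ and $x_1^4x_2$ force $\det A$ and sums of principal minors, namely $\det A=625$ and $\sum_{|S|=4}\det A[S]\cdot A_{ii}$ patterns. These pin down the diagonal and moduli of the off-diagonal entries; a Hadamard/Fischer-type inequality between the resulting values would then contradict PSD-ness. I would try $\mu=(2,2,1)$ first, since it exactly matches the statement's ``size-five immanant POT obstruction'' certificate.
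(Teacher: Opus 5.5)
Your reduction is the paper's own: the identity $a_\lambda=\Imm_{\lambda'}(A)$ is a formal identity in the entries of $A$, so it holds for Hermitian $A$ too. It gives $\per A=a_{(1^5)}=1281$ and $\Imm_{(2,2,1)}(A)=a_{(3,2)}=7125>5\per A$.

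The gap is the step that produces the contradiction. You leave $\Imm_{(2,2,1)}(A)\le5\per A$ as an expectation, and the reasons you give for it are wrong. The inequality $\Imm_\mu(A)\le f^\mu\per A$ is Lieb's permanental dominance conjecture, which the problem list calls POT. It is \emph{not} known to be false in general. Shchesnovich's counterexample is to Soules's stronger eigenvalue form, namely that $\per A$ is the top eigenvalue of the Schur power matrix. So the size of his example tells you nothing about the immanant inequality at $d=5$. The paper closes this step by citing that permanental dominance is known for all $d\le13$ \cite{Wanless2022}. With that citation your argument is complete, and the same result also rules out $\mu=(3,2),(3,1,1),(4,1)$.

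Your fallbacks do not close the gap as written, but each can be repaired into a proof that uses only classical inequalities.

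\emph{Hook fallback.} You tested the wrong hook. $\mu=(2,1,1,1)$ pairs with $a_{(4,1)}=4500$, which satisfies the bound. But $\mu=(4,1)=(2,1,1,1)'$ pairs with $a_{(2,1,1,1)}=6580>4\cdot1281=5124$. This shape needs nothing deep. Since $\chi^{(4,1)}(\sigma)=\operatorname{fix}(\sigma)-1$, one has $\Imm_{(4,1)}(A)=\sum_ia_{ii}\per A^{(i)}-\per A$, where $A^{(i)}$ is the principal submatrix on $[5]\setminus\{i\}$. Lieb's inequality $\per A\ge a_{ii}\per A^{(i)}$ for positive semidefinite $A$ then gives $\Imm_{(4,1)}(A)\le4\per A$, a contradiction.

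\emph{Monomial fallback.} This works once the coefficient is computed correctly. Only all-singleton ordered partitions contribute to $x_1\cdots x_5$, so its coefficient in $\eta(x_1A,\ldots,x_5A)$ is $120\prod_ia_{ii}$. In $p$ the same coefficient is $\per(5J-4I)=168376$, where $J$ is the all-ones $5\times5$ matrix. Thus $\prod_ia_{ii}=168376/120>1281=\per A$, contradicting Marcus's permanental Hadamard inequality $\per A\ge\prod_ia_{ii}$. The determinantal Hadamard or Fischer inequalities give nothing here, since $625\le168376/120$.

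Either repair makes the theorem independent of the $d\le13$ result the paper relies on.
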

\begin{proof}
Since $\eta(x_1A,\ldots,x_5A)$ is Schur positive with immanants as coefficients, the representation $p(x)=\eta(x)$ would identify the Schur coefficients (up to normalization). Since Lieb's permanental dominance inequality is known to hold in size five (indeed in size $\le 13$~\cite{Wanless2022} it is known), it would force
$a_\lambda\le f^\lambda a_{(1^5)}$ for every $\lambda\vdash5$. But this contradictions Theorem~\ref{thm:pot}, hence there cannot be any PSD $A$ for which $p(x)=\eta(x_1A,\ldots,x_5A)$.
\end{proof}

\begin{remark}[Scope]
The example is Schur-positive, and its lower endpoint inequalities hold, so it refutes neither Problem~\ref{ctx:aim-problem-36} nor Problem~\ref{ctx:aim-problem-37}; those require the positive-definite pencils of Theorems~\ref{thm:aim36} and~\ref{thm:aim37} below.  The full independent audit is included as \path{counterexamples/aim-problems/artifacts/}.
\end{remark}
\end{cxrefutation}

\cxsubtitle{Problem 36: stable Schur positivity}

\begin{cxcredits}[aim-problem-36]
\foundby{GPT-5.6 (Pro)}{2026-07-30}
\end{cxcredits}

\begin{cxrefutation}
  We keep the notation of the previous section. %
Problem~\ref{ctx:aim-problem-36} asks whether stability forces a Schur-positive representation. %
\end{cxrefutation}

\begin{cxsource}{aim-problem-36}
Borcea--Br\"and\'en, AIM problem list \citeyearpar{AIMStableProblems}, Problem 36
\end{cxsource}

\begin{cxstatement}{aim-problem-36}
Let $p$ be a symmetric, real stable and homogeneous polynomial of degree $d$ in $n$ variables with $d\le n$ and suppose that $p$ has at least one positive (and then all non-negative) coefficients in the monomial basis.  Is $p$ Schur-positive?  That is, when $p(z_1,\ldots,z_n)$ is expanded in the Schur-basis
\[p(z_1,\ldots,z_n)=\sum_{\lambda\vdash d}a_\lambda s_\lambda(z_1,\ldots,z_n),\]
is $a_\lambda\ge0$ for all $\lambda\vdash d$?
\end{cxstatement}

\begin{cxsummary}{aim-problem-36}
Borcea--Br\"and\'en Problem 36: stable Schur positivity \citeyearpar{AIMStableProblems}
\end{cxsummary}

\begin{cxcertificate}{aim-problem-36}
Positive-definite $5\times5$ det-pencil; exact Schur expansion with $[s_{(1^5)}]q=-2972$.
\end{cxcertificate}

\begin{cxrefutation}
The answer to Problem~\ref{ctx:aim-problem-36} is no already at $d=n=5$.  Let
\begingroup
\scriptsize
\[
J_1=
\begin{pmatrix}
38&19&19&2&36\\
19&38&2&19&36\\
19&2&38&19&36\\
2&19&19&38&36\\
36&36&36&36&72
\end{pmatrix},\quad
J_2=
\begin{pmatrix}
38&19&19&17&21\\
19&38&17&19&21\\
19&17&38&34&36\\
17&19&34&38&36\\
21&21&36&36&42
\end{pmatrix},\quad
J_3=
\begin{pmatrix}
38&34&19&17&36\\
34&38&17&19&36\\
19&17&38&19&21\\
17&19&19&38&21\\
36&36&21&21&42
\end{pmatrix},
\]
\[
J_4=
\begin{pmatrix}
38&4&19&2&21\\
4&8&2&4&6\\
19&2&38&4&21\\
2&4&4&8&6\\
21&6&21&6&42
\end{pmatrix},\qquad
J_5=
\begin{pmatrix}
8&4&4&2&6\\
4&38&2&19&21\\
4&2&8&4&6\\
2&19&4&38&21\\
6&21&6&21&42
\end{pmatrix},
\]
\endgroup
and define the determinantal polynomial
\begin{equation}\label{eq:aim36-q}
q(x_1,\ldots,x_5)=\frac1{648}\det\Bigl(\nlsum_{i=1}^5x_iJ_i\Bigr).
\end{equation}

\begin{theorem}[\statusfalse: stable Schur positivity, AIM Problem 36]\label{thm:aim36}
The polynomial \eqref{eq:aim36-q} is symmetric, homogeneous of degree five in five variables, real stable, and all $126$ of its ordinary monomial coefficients are strictly positive, yet it is \emph{\textbf{not}} Schur positive, since
\[
[s_{(1^5)}]\,q=-2972<0.
\]
\end{theorem}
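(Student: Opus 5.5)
The plan is to verify the four claimed properties separately, with stability handled structurally and the remaining three by exact integer arithmetic.

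\emph{Real stability.} Each $J_i$ is a real symmetric matrix. I would show that each $J_i$ is positive definite by exhibiting its leading principal minors, or a rational $LDL^{T}$ factorization, in exact arithmetic. The standard Borcea--Br\"and\'en lemma then applies: if $J_1,\dots,J_5\succeq 0$ and $\sum_i J_i\succ0$, then $\det(\sum_i x_iJ_i)$ is real stable. The proof is short. If $\Im x_i>0$ for all $i$ and $v\neq0$ lies in the kernel of $\sum_i x_iJ_i$, then
\[
\Im\bigl(v^*(\textstyle\sum_i x_iJ_i)v\bigr)=\textstyle\sum_i \Im(x_i)\,v^*J_iv>0,
\]
which is a contradiction. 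Multiplying by the positive constant $1/648$ preserves stability, and homogeneity of degree five is immediate.

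\emph{Symmetry.} The hard structural point is that $q$ is symmetric even though the individual pencil matrices are not permuted copies of one another in any obvious way. My first attempt would be to find, for each transposition $(i\,j)$, a $5\times 5$ matrix $P$ with $|\det P|=1$ (for instance a signed permutation, or some integral matrix) such that $P^{T}J_kP=J_{\tau(k)}$ for all $k$. The visible block patterns support this: swapping indices $1\leftrightarrow2$ together with $3\leftrightarrow4$ appears to relate $J_4$ and $J_5$, and the analogous swaps relate $J_2$ and $J_3$. If no such matrix exists for some generators of $S_5$, I would fall back on brute force. That means expanding $\det(\sum_i x_iJ_i)$ exactly, which gives 126 monomials with integer coefficients, and checking that the coefficient of $x^\alpha$ depends only on the sorted exponent vector $\alpha$. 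This computation also settles the next property.

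\emph{Positive monomial coefficients and the Schur coefficient.} From the exact expansion, I would read off the monomial-symmetric coefficients $c_\mu$ for the seven partitions $\mu\vdash5$, giving $q=\sum_\mu c_\mu m_\mu$. I would check that all $c_\mu>0$ and that each is divisible by $648$, which is consistent with the normalization. Each coefficient can be computed as a mixed discriminant: for $\alpha$ with $\alpha!=\prod_i\alpha_i!$, the coefficient of $x^\alpha$ is $D(J_1^{\alpha_1},\dots,J_5^{\alpha_5})/\alpha!$, so only seven mixed discriminants are needed. I would then invert the unitriangular Kostka matrix via $m_\mu=\sum_\lambda (K^{-1})_{\mu\lambda}s_\lambda$. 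A shortcut is available for the single coefficient we need: $[s_{(1^5)}]q=\langle q,s_{(1^5)}\rangle$ can be computed as the coefficient of $x_1x_2x_3x_4x_5$ in $a_\delta\, q$ divided by $a_\delta$, that is, by an alternant extraction. Either route is routine exact arithmetic, ending in $-2972$.

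The main obstacle I expect is symmetry. If no clean family of congruences exists, the argument reduces to a fully computational certificate rather than a human-readable proof.
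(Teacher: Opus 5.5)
Your plan is correct and is essentially the paper's proof: positive definiteness of the $J_i$ (the paper checks leading principal minors and applies Sylvester's criterion) gives real stability by exactly your imaginary-part argument, and the exact monomial expansion followed by Kostka inversion gives symmetry, positivity of all $126$ coefficients, and $[s_{(1^5)}]q=-2972$; the symmetry congruences you hoped for do exist for every $\sigma\in\mathfrak S_5$, coming from the action on $V=\ker B\cong S^{(3,2)}$ with $J_i=2G(I+5P_i)$ (Appendix~\ref{sec:aim36pencil}), though they cannot all be signed permutations, since a signed-permutation congruence preserves the multiset of diagonal entries and $J_1$ has a diagonal entry $72$ while $J_2$ does not. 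One slip in your optional shortcut: bialternant extraction gives $[s_{(1^5)}]q$ as the coefficient of $x_1^5x_2^4x_3^3x_4^2x_5$ in $a_\delta\,q$ (with $a_\delta$ the Vandermonde); it is not the coefficient of $x_1x_2x_3x_4x_5$, which has the wrong degree.
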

\begin{proof}
The leading principal minors of $J_i$ are
\[
\begin{array}{c|rrrrr}
 &\Delta_1&\Delta_2&\Delta_3&\Delta_4&\Delta_5\\ \hline
J_1&38&1083&28728&202176&1119744\\
J_2&38&1083&28728&202176&1119744\\
J_3&38&288&8208&202176&1119744\\
J_4&38&288&8208&46656&1119744\\
J_5&8&288&1728&46656&1119744,
\end{array}
\]
all positive, so Sylvester's criterion gives $J_i\succ0$; hence $q$ is real stable, and homogeneity of degree 5 is immediate. %
Exact expansion of the determinant in the monomial basis gives
\begin{equation*}
  \begin{split}
    q={}&1728m_{(5)}+19440m_{(4,1)}+66555m_{(3,2)}+140910m_{(3,1,1)}\\
    &+250440m_{(2,2,1)}+547405m_{(2,1,1,1)}+1182860m_{(1^5)}.
  \end{split}
\end{equation*}
Inverting the Kostka matrix over $\mathbb Z$ in the partition order $(5),(4,1),(3,2),(3,1,1),(2,2,1),(2,1,1,1),(1^5)$ translates $q$ into the Schur basis, yielding
\begin{align*}
q={}&1728s_{(5)}+17712s_{(4,1)}+47115s_{(3,2)}+56643s_{(3,1,1)}+62415s_{(2,2,1)}+56437s_{(2,1,1,1)}-2972s_{(1^5)}.
\end{align*}
The last coefficient alone also follows from the single integer identity
\begin{align*}
[s_{(1^5)}]q={}&1728-2(19440)-2(66555)+3(140910)+3(250440)-4(547405)+1182860=-2972.
\end{align*}
The accompanying source package recomputes the Schur expansion in a second, independent way, and every step uses rational arithmetic only.
\end{proof}

\begin{remark}[Origin of the pencil]
The matrices are not a numerical accident.  Let $V=\ker B\cong S^{(3,2)}$, where $B$ maps the permutation module on $2$-subsets of $[5]$ to $\mathbb R^5$ by $e_{ab}\mapsto e_a+e_b$, let $P_i$ be the rank-three projection attached to the point stabilizer of $i$, and let $G$ be the Gram matrix of the chosen basis of $V$.  Then $J_i=2G(I+5P_i)$, and the whole one-parameter family $\det(\sum_ix_i(I+tP_i))$ has a closed Schur expansion whose $s_{(1^5)}$ coefficient turns negative for $t$ large; $t=5$ clears denominators.  \emph{Appendix~\ref{sec:aim36pencil} develops this construction in full.}
\end{remark}

\end{cxrefutation}

\cxsubtitle{Problem 37: Schur-style inequality for stable polynomials}
\begin{cxcredits}[aim-problem-37]
\foundby{GPT-5.6 (Pro)}{2026-07-30}
\foundby{bugfixed by Opus 5}{2026-08-10}
\end{cxcredits}

Next, we close off the lower endpoint analog of Problem~\ref{ctx:aim-problem-38} (which concerned $a_\lambda \le f^\lambda a_{(1^d)}$).
\begin{cxsource}{aim-problem-37}
Borcea--Br\"and\'en, AIM problem list \citeyearpar{AIMStableProblems}, Problem 37
\end{cxsource}

\begin{cxstatement}{aim-problem-37}
If $A$ is a $d\times d$ matrix then $\det(A)=\Imm_{(1^d)}(A)$ and $\per(A)=\Imm_{(d)}(A)$, where $(1^d)=(1,\ldots,1)$ and $(d)=(1^d)'=(d,0,\ldots,0)$.  Schur's inequality [J,L] asserts that if $A$ is a $d\times d$ positive semi-definite matrix then $\Imm_\lambda(A)\ge f^\lambda\det(A)$ for all $\lambda\vdash d$, where $f^\lambda$ is the number of standard Young tableaux of shape $\lambda$.  A natural real stable extension of this result would be as follows.

\emph{Problem 37.}  Let $p$ be as in Problem 36.  Is $a_\lambda\ge f^\lambda a_{(d)}$ for all $\lambda\vdash d$?
\end{cxstatement}

\begin{cxsummary}{aim-problem-37}
Borcea--Br\"and\'en Problem 37: stable Schur inequality (lower endpoint) \citeyearpar{AIMStableProblems}
\end{cxsummary}

\begin{cxcertificate}{aim-problem-37}
Schur-positive member of the same family; $a_{(1^5)}=69<125=f^{(1^5)}a_{(5)}$.
\end{cxcertificate}

\begin{cxrefutation}
The lower endpoint needs a different member of the same family, because it needs a witness that is Schur \emph{positive}.  Let $G$ be the Gram matrix of the basis of $V$ that produced the $J_i$,
\[
G=\begin{pmatrix}
4&2&2&1&3\\
2&4&1&2&3\\
2&1&4&2&3\\
1&2&2&4&3\\
3&3&3&3&6
\end{pmatrix},
\]
put $\widetilde J_i:=(4J_i+2G)/5$, again a symmetric positive-definite \emph{integer} matrix, %
and define
\begin{equation}\label{eq:aim36-qtilde}
\widetilde q(x_1,\ldots,x_5)=\frac1{5184}\det\Bigl(\nlsum_{i=1}^5x_i\widetilde J_i\Bigr),
\end{equation}
which is $p_4$ in the notation of Appendix~\ref{sec:aim36pencil}.

\begin{theorem}[\statusfalse: lower endpoint, AIM Problem 37]\label{thm:aim37}
The polynomial \eqref{eq:aim36-qtilde} satisfies every hypothesis of Problem~\ref{ctx:aim-problem-37}, yet it violates the lower endpoint inequality $a_\lambda\ge f^\lambda a_{(d)}$: at $\lambda=(1^5)$, where $f^{(1^5)}=1$,
\[
a_{(1^5)}=69<125=a_{(5)}.
\]
\end{theorem}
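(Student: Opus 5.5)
The proof follows the template of Theorem~\ref{thm:aim36}: verify the hypotheses of Problem~\ref{ctx:aim-problem-36} for $\widetilde q$, then compute its Schur expansion exactly and read off the two endpoint coefficients.

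\textbf{Step 1: the hypotheses.} Each $\widetilde J_i=(4J_i+2G)/5$ is positive definite, since $J_i\succ0$ by Theorem~\ref{thm:aim36} and $G\succ0$ as a Gram matrix of a basis (Sylvester's criterion on $G$ confirms this directly). A nonnegative combination of positive definite matrices is positive definite, so $\widetilde q$ is a determinantal pencil with positive definite coefficients. Hence $\widetilde q$ is real stable and homogeneous of degree $5$ in $5$ variables.

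Integrality of $\widetilde J_i$ is checked entrywise: $4J_i+2G\equiv0\pmod 5$.

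For symmetry, use the structure recorded in the Remark on the origin of the pencil. Writing $J_i=2G(I+5P_i)$ gives
\[
\widetilde J_i=\tfrac{2}{5}G\bigl(4I+20P_i+I\bigr)=2G(I+4P_i),
\]
so $\widetilde q$ is the $t=4$ member of the family $\det\bigl(\sum_i x_i(I+tP_i)\bigr)$, up to the constant $\det(2G)$. The $S_5$-equivariance of the projections $P_i$ gives $\sigma P_i\sigma^{-1}=P_{\sigma(i)}$ on $V$. Since the determinant is conjugation invariant, $\widetilde q$ is symmetric.

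\textbf{Step 2: normalization and monomial coefficients.} Expand the determinant exactly in the monomial basis $m_\lambda$, $\lambda\vdash5$, working over $\mathbb{Q}$. From $\det\widetilde J_i=\det(2G)\det(I+4P_i)$ and $\det(J_i)=1119744$ one checks that $5184$ is the right normalization, giving integer coefficients. Confirm that all seven $m_\lambda$-coefficients are positive; this supplies the hypothesis ``at least one positive coefficient.''

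\textbf{Step 3: Schur coefficients.} Invert the Kostka matrix over $\mathbb{Z}$ in the same partition order as in Theorem~\ref{thm:aim36}. We need only two coefficients:
\begin{itemize}
\item $a_{(5)}=[m_{(5)}]\widetilde q$. This is the $x_1^5$ coefficient, $\det\widetilde J_1/5184$, and should equal $125$.
\item $a_{(1^5)}$, obtained from the signed alternating sum with the same weights $1,-2,-2,3,3,-4,1$ used for $-2972$ in Theorem~\ref{thm:aim36}. This should evaluate to $69$.
\end{itemize}
Then $69<1\cdot125$ gives the violation, since $f^{(1^5)}=1$. As a consistency check, I would also confirm the full expansion (all $a_\lambda\ge0$), so the witness is Schur positive as claimed.

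\textbf{Main obstacle.} The hard part is the exact degree-5 expansion of a $5\times5$ symbolic determinant. Rather than brute force, I would try the closed Schur expansion of $\det\bigl(\sum_i x_i(I+tP_i)\bigr)$ from Appendix~\ref{sec:aim36pencil}, evaluated at $t=4$.

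Failing that, a cheaper route is to evaluate at $x=e_1+\dots+e_k$ and other small integer points and interpolate. The monomial coefficients $[m_\lambda]$ are obtained as mixed discriminants of the $\widetilde J_i$, and rational arithmetic certifies each step.
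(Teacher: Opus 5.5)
Your proposal is correct and follows the paper's proof: you establish that the $\widetilde J_i$ are positive definite, derive stability, symmetry and homogeneity, then expand in the monomial basis and invert the Kostka matrix to read off $a_{(5)}=125$ and $a_{(1^5)}=69$; the only difference is that you get positive definiteness from the positive combination of $J_i\succ0$ and $G\succ0$, where the paper uses its table of leading principal minors. Your identification $\widetilde J_i=2G(I+4P_i)$, which gives $\widetilde q=p_4$ because $\det(2G)=5184$, is exactly the link the paper records in its appendix, where the closed formula for $[s_{(1^5)}]p_t$ at $t=4$ returns $69$ without any determinant expansion.
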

\begin{proof}
The leading principal minors of the $\widetilde J_i$ are
\[
\begin{array}{c|rrrrr}
 &\Delta_1&\Delta_2&\Delta_3&\Delta_4&\Delta_5\\ \hline
\widetilde J_1&32&768&17280&118800&648000\\
\widetilde J_2&32&768&17280&118800&648000\\
\widetilde J_3&32&240&5760&118800&648000\\
\widetilde J_4&32&240&5760&32400&648000\\
\widetilde J_5&8&240&1440&32400&648000,
\end{array}
\]
all positive, so $\widetilde J_i\succ0$ and $\widetilde q$ is real stable, symmetric and homogeneous of degree five by the argument used for \eqref{eq:aim36-q}. It can be verified that $\widetilde{q}$ has the Schur positive representation
\begin{equation}
  \label{eq:1}
\widetilde{q}={}125s_{(5)}+1100s_{(4,1)}+2545s_{(3,2)}+3110s_{(3,1,1)}+3321s_{(2,2,1)}+2972s_{(2,1,1,1)}+69s_{(1^5)},
\end{equation}
which can be obtained by starting with the direct monomial basis expansion
\begin{align*}
\widetilde q={}&125m_{(5)}+1225m_{(4,1)}+3770m_{(3,2)}+7980m_{(3,1,1)}+13846m_{(2,2,1)}+30004m_{(2,1,1,1)}+64472m_{(1^5)},
\end{align*}
from which upon inverting the Kostka matrix in the same partition order yields~\eqref{eq:1}. Now $f^{(1^5)}=1$, so the lower endpoint at $\lambda=(1^5)$ would require $69\ge125$.  It is the only partition at which the inequality fails: the remaining six hold, the tightest being $\lambda=(4,1)$ with $1100\ge4\cdot125$.
\end{proof}

\end{cxrefutation}

% ==== end counterexamples/aim-problems/case.tex ====
% ==== counterexamples/macdonald-schur-convexity/case.tex ====

\cxtitle{Schur-convexity of a ratio of Macdonald polynomials}

\begin{cxcredits}
\posedby{McSwiggen and Sahi \citeyearpar{McSwiggenSahi2026}}
\foundby{GPT-5.6 (Pro)}{2026-05-14}
\formalizedby{S.~Sra}
\auditedby{S.~Sra}
\contributedby{S.~Sra}
\end{cxcredits}

\begin{cxcontext}
Let $P_\lambda(x;q,t)$ denote the Macdonald polynomial in the monic normalization of~\citet{macdonald1998symmetric}, with parameters $q,t\in(0,1)$ over a partition $\lambda$ with at most $n$ parts. At $q=t$ it specializes to the Schur polynomial $s_\lambda(x)$.  The statement concerns two convexity properties of the map $\lambda\mapsto\Omega_\lambda(x;q,t)$ on partitions studied by~\citet{McSwiggenSahi2026}: \emph{log-convexity}, and \emph{Schur-convexity} (i.e., $\lambda\succeq\mu$ implies $\Omega_\lambda\ge\Omega_\mu$).  The normalization $\Omega_\lambda$ is defined inside the quoted statement.
\end{cxcontext}

\begin{cxsource}{macdonald-schur-convexity}
C.~McSwiggen and S.~Sahi \citeyearpar{McSwiggenSahi2026}, Theorem 2.1
\end{cxsource}

\begin{cxstatement}{macdonald-schur-convexity}
For all $q,t\in(0,1)$, $a>0$, and $x\in\mathcal L_n^{q,t,a}$, the function $\lambda\mapsto\Omega_\lambda(x;q,t)$ is log-convex and Schur-convex.

Here, by \S2.1 of the same paper, $\Omega_\lambda(x;q,t)=P_\lambda(x;q,t)/P_\lambda(t^\delta;q,t)$ is the Macdonald polynomial normalized to $1$ at the principal specialization $t^\delta=(t^{n-1},t^{n-2},\ldots,1)$, and the scaled, $t$-shifted, dominant $q$-lattice is
\[\mathcal L_n=\mathcal L_n^{q,t,a}=\left\{(aq^{-\mu_1}t^{n-1},aq^{-\mu_2}t^{n-2},\ldots,aq^{-\mu_n}):(\mu_1\ge\ldots\ge\mu_n)\in\mathbb Z^n\right\}\subset\R^n.\]
\end{cxstatement}

\begin{cxsummary}{macdonald-schur-convexity}
McSwiggen--Sahi Theorem 2.1: Macdonald lattice Schur-convexity \citeyearpar{McSwiggenSahi2026}
\end{cxsummary}

\begin{cxcertificate}{macdonald-schur-convexity}
Rank-two Schur specialization, valid for every $0<r<1$.
\end{cxcertificate}

\begin{cxrefutation}
\begin{theorem}[\statusfalse: Theorem 2.1 of \citet{McSwiggenSahi2026}]\label{thm:macdonald}
The stated Schur-convexity fails already in rank two.
\end{theorem}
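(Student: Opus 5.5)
The plan is to specialize to the simplest setting the statement allows: $n=2$ and $q=t=r\in(0,1)$. There $P_\lambda(x;r,r)=s_\lambda(x)$, so $\Omega_\lambda(x)=s_\lambda(x_1,x_2)/s_\lambda(r,1)$. Only the two partitions of size two are needed, $\lambda=(2)$ and $\mu=(1,1)$, with $(2)\succeq(1,1)$ in dominance order. Schur-convexity as stated would force $\Omega_{(2)}(x)\ge\Omega_{(1,1)}(x)$ at every lattice point $x\in\mathcal L_2^{r,r,a}$. I will exhibit a lattice point where this inequality is strict in the opposite direction.

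First I write both sides explicitly. We have $s_{(1,1)}(x_1,x_2)=x_1x_2$ and $s_{(2)}(x_1,x_2)=x_1^2+x_1x_2+x_2^2$. At the principal specialization $t^\delta=(r,1)$ this gives
\[
\Omega_{(1,1)}(x)=\frac{x_1x_2}{r},\qquad \Omega_{(2)}(x)=\frac{x_1^2+x_1x_2+x_2^2}{1+r+r^2}.
\]

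Next I choose the lattice point. The generic choice $\mu=(0,0)$, i.e.\ $x=(ar,a)$, gives equality $\Omega_{(2)}=\Omega_{(1,1)}=a^2$, so it is useless. Instead I take $\mu=(0,-1)$, which is admissible since $\mu_1\ge\mu_2$. This gives $x=(ar,\,ar)$, a point with equal coordinates. Substituting,
\[
\Omega_{(1,1)}(x)=a^2r,\qquad \Omega_{(2)}(x)=\frac{3a^2r^2}{1+r+r^2}.
\]
The desired violation $\Omega_{(2)}(x)<\Omega_{(1,1)}(x)$ is then equivalent to $3r<1+r+r^2$, i.e.\ $(1-r)^2>0$. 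This holds for every $r\in(0,1)$ and every $a>0$.

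This shows Schur-convexity fails in rank two. Log-convexity fails as well, since the theorem's claim is the conjunction of the two properties and one failing suffices. The verification is a single rational-function identity, so it can be checked in exact arithmetic.

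The only real obstacle is picking the right lattice point, since the natural base point $\mu=0$ is exactly the equality case. Pushing $x$ to the diagonal $x_1=x_2$ is what makes the AM–GM-type gap $(1-r)^2$ appear. Beyond that, the one subtlety is to confirm that the convention for $t^\delta$ and for $\mathcal L_n$ matches the statement quoted above, with $t^{n-1}$ attached to the first coordinate. Under that convention $\mu=(0,-1)$ is dominant and produces equal coordinates.
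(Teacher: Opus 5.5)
Your argument is correct and is essentially the paper's proof. The paper takes $q=t=r$, $a=1$ and lattice index $(1,0)$ to land at $x=(1,1)$. Your index $(0,-1)$ gives $x=(ar,ar)$, which is the same diagonal point up to the scalar $ar$ (exactly $(1,1)$ when $a=1/r$). Since $\Omega_{(2)}$ and $\Omega_{(1,1)}$ are both homogeneous of degree two, you reduce to the same inequality $(1-r)^2>0$.

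One caution: your aside that log-convexity ``fails as well'' does not follow from the reason you give, since failure of one conjunct refutes the conjunction, not the other conjunct. It is not needed for this statement, which concerns Schur-convexity only.
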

\begin{proof}
Set $q=t=r\in(0,1)$, $a=1$, and choose lattice index $\nu=(1,0)$.  The resulting lattice point is $x=(1,1)$.  Let $\lambda=(2,0)$ and $\mu=(1,1)$; then $\lambda\succeq\mu$.  At $q=t$, Macdonald polynomials specialize to Schur polynomials, and direct evaluation gives
\[
\Omega_{(2,0)}(1,1;r,r)=\frac{3}{1+r+r^2},\qquad
\Omega_{(1,1)}(1,1;r,r)=\frac1r.
\]
But $1+r+r^2-3r=(1-r)^2>0$, so $\Omega_{(2,0)}<\Omega_{(1,1)}$ for every $0<r<1$, opposite to Schur-convexity.  At $r=1/2$, the values are $12/7<2$.
\end{proof}

\begin{remark}[Scope]
The specialization $q=t$ is a convenience.  Appendix~\ref{sec:macdonaldomega} carries the reversal for all $q,t\in(0,1)$, with gap $(1-t)^2(1-qt)/\bigl(t(1+t)(1-qt^2)\bigr)$, and shows it is admissible whenever $t=q^k$ for a positive integer $k$ --- the case above being $k=1$.  It also locates the failure, in a determinant shift that the principal-specialization normalization does not commute with, and shows that the \textbf{$1^n$-normalized ratio}
\begin{equation*}
  W_\lambda=P_\lambda(x)/P_\lambda(1^n)
\end{equation*}
satisfies in this case exactly the inequality $\Omega_\lambda$ violates.  %
\end{remark}
\end{cxrefutation}

% ==== end counterexamples/macdonald-schur-convexity/case.tex ====
% ==== counterexamples/theta-derivative-log-concavity/case.tex ====

\cxtitle{Log-concavity of all Derivatives of the Jacobi-theta kernel}

\begin{cxcredits}
\posedby{Coffey and Csordas \citeyearpar{Csordas2014}}
\foundby{GPT-5.5 (Pro)}{2026-02-22}
\formalizedby{S.~Sra}
\auditedby{S.~Sra}
\contributedby{S.~Sra}
\end{cxcredits}

\begin{cxcontext}
The kernel $\Phi$ below is the Jacobi theta kernel whose cosine transform is the Riemann $\xi$-function. A positive function $g$ is \emph{strictly log-concave} when $(g')^2-gg''>0$, so the assertion $J_n(t)>0$ says that the $(n-1)$st derivative $\Phi^{(n-1)}$ is strictly log-concave. %
\end{cxcontext}

\begin{cxsource}{theta-derivative-log-concavity}
G.~Csordas \citeyearpar{Csordas2014}, Open Problem 4.13, restating Coffey--Csordas Conjecture 2.5
\end{cxsource}

\begin{cxstatement}{theta-derivative-log-concavity}
([5, Conjecture 2.5])  Show that the derivatives of the Jacobi theta function, $\Phi(t)$, are (strictly) log-concave on $\R$; that is, for each $n\in\mathbb N$,
\[J_n(t):=(\Phi^{(n)}(t))^2-\Phi^{(n-1)}(t)\Phi^{(n+1)}(t)>0\quad\text{for }t\in\R.\]
Here, by (4.2) of the same paper, the Jacobi theta function (without the usual factor $4$) is $\Phi(t):=\sum_{n\ge1}\pi n^2\left(2\pi n^2e^{4t}-3\right)\exp\left(5t-\pi n^2e^{4t}\right)$.
\end{cxstatement}

\begin{cxsummary}{theta-derivative-log-concavity}
Coffey--Csordas Conjecture 2.5 / Csordas Problem 4.13 \citeyearpar{Csordas2014}
\end{cxsummary}

\begin{cxcertificate}{theta-derivative-log-concavity}
Rigorous interval evaluation of $J_9(1/50)<0$.
\end{cxcertificate}

\begin{cxrefutation}
\begin{theorem}[\statusfalse; computer-assisted proof]\label{thm:theta}
At $n=9$ and $t=1/50$, one has $J_9(t)<0$.
\end{theorem}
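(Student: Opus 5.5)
The plan is a computer-assisted evaluation of $J_9(1/50)$ in interval arithmetic. The only analytic work is a reduction that makes the infinite series defining $\Phi$ and its derivatives finite with rigorous error control.

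\textbf{Step 1: closed form for the derivatives.} Write each summand as $\phi_m(t)=\pi m^2(2\pi m^2e^{9t}-3e^{5t})\exp(-\pi m^2e^{4t})$. Set $u=e^{4t}$ and $a_m=\pi m^2$. Each derivative $\phi_m^{(k)}(t)$ then equals $e^{5t}\exp(-a_mu)\,P_k(a_m,u)$, where $P_k$ is a polynomial in $a_m u$ with integer coefficients (up to powers of $4$ and $5$). These polynomials come from the recursion
\[
\frac{d}{dt}\bigl[e^{5t}e^{-a u}Q(au)\bigr]=e^{5t}e^{-au}\bigl(5Q+4(au)Q'-4(au)Q\bigr).
\]
We generate $P_8,P_9,P_{10}$ symbolically and exactly over $\mathbb Q$. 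Then
\[
\Phi^{(k)}(t)=e^{5t}\sum_{m\ge1}P_k(\pi m^2u)\,e^{-\pi m^2u},
\]
and
\[
J_9(t)=e^{10t}\Bigl[\bigl(\textstyle\sum_m P_9\bigr)^2-\bigl(\sum_m P_8\bigr)\bigl(\sum_m P_{10}\bigr)\Bigr],
\]
so the sign of $J_9$ is the sign of the bracket.

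\textbf{Step 2: tail bound.} At $t=1/50$ we have $u=e^{0.08}\approx1.083$. The factor $e^{-\pi m^2 u}$ then decays like $e^{-3.4m^2}$, while $|P_k(x)|\le C_k(1+x)^{k/1+1}$ grows only polynomially, with $C_k$ read off from the coefficient sum. We truncate at some $M$, say $M=10$. For $m>M$, since $x\mapsto x^je^{-x}$ is decreasing for $x>j$, we bound the tail by a geometric series:
\[
\sum_{m>M}|P_k(\pi m^2u)|e^{-\pi m^2u}\le 2|P_k|_{\mathrm{abs}}(\pi M'^2u)\,e^{-\pi (M+1)^2u}.
\]
This makes the tail far below $10^{-100}$. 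Each tail is added to its partial sum as an interval $[-\varepsilon_k,\varepsilon_k]$.

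\textbf{Step 3: interval evaluation.} We evaluate the partial sums using MPFI/Arb ball arithmetic. This covers $\pi$, $e^{0.08}$ (with $t=1/50$ exact) and the exponentials, at a few hundred bits of precision. We then form the bracket as an interval and check that its upper endpoint is negative.

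\textbf{Main obstacle: cancellation.} The derivatives involve large coefficients, of order $4^{10}$ times factorial-type growth in the $P_k$. Moreover, $J_9$ is a difference of two nearly equal products, since the conjecture is ``almost true''. The sign therefore hinges on a small residual relative to the magnitude of each term. If the enclosure straddles $0$, the first remedy is to raise the working precision and evaluate the $P_k$ in Horner form on intervals, which keeps the relative width small. If that is not enough, I would choose a nearby rational $t$ where the margin is larger, though the statement fixes $t=1/50$, so precision is the intended lever. Independence of the rounding is guaranteed because every operation is enclosure-preserving.
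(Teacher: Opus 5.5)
Your plan is essentially the paper's proof: the same integer recursion $P_{k+1}(u)=4uP_k'(u)+(5-4u)P_k(u)$ from $P_0=2u-3$, termwise differentiation, a geometrically bounded tail, and outward-rounded interval summation of $\Phi^{(8)},\Phi^{(9)},\Phi^{(10)}$ at $t=1/50$.

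Two corrections are needed. First, each summand carries the weight $\pi m^2$, so the formula is $\Phi^{(k)}(t)=e^{5t}\sum_{m\ge1}\pi m^2P_k(\pi m^2e^{4t})e^{-\pi m^2e^{4t}}$; your display drops this factor, and implemented literally it would misweight the $m\ge2$ terms against $m=1$. Second, the cancellation you anticipate does not occur: at $t=1/50$ one has $(\Phi^{(9)})^2\approx3.37\times10^{19}$ against $\Phi^{(8)}\Phi^{(10)}\approx5.98\times10^{19}$. Hence $J_9(1/50)\approx-2.61\times10^{19}$ is negative by a wide relative margin, and modest precision suffices.
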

\begin{proof}
Put $u=\pi m^2e^{4t}$ and define integer polynomials
\[
P_0(u)=2u-3,\qquad P_{k+1}(u)=4uP_k'(u)+(5-4u)P_k(u).
\]
Termwise differentiation gives
\[
\Phi^{(k)}(t)=\sum_{m\ge1}\pi m^2e^{5t-u}P_k(u).
\]
Outward-rounded interval summation, with an explicit geometrically bounded tail, yields
\begin{align*}
\Phi^{(8)}(1/50)&\approx 2.95940368409056694\cdot10^8,\\
\Phi^{(9)}(1/50)&\approx 5.80442287530267756\cdot10^9,\\
\Phi^{(10)}(1/50)&\approx 2.02040519425950258\cdot10^{11},
\end{align*}
and an interval for $J_9(1/50)$ contained strictly in the negative half-line, centered at
\[
-2.61006208371358922\cdot10^{19}.
\]
The source package contains a high-precision evaluator and a Sage \texttt{RealIntervalField} certificate.  The negative margin is many orders of magnitude larger than the certified truncation error.
\end{proof}

\begin{remark}
  GPT-5.5 managed to prove the Tur\'an inequalities for $\Phi^{(n)}$ for $n=2$ and $n=3$; while we were attempting an inductive proof to generalize those proofs to all $n$ is when the counterexample popped up. However, we have not yet verified whether the Tur\'an inequalities hold for all $n \le 8$.
\end{remark}
\end{cxrefutation}

% ==== end counterexamples/theta-derivative-log-concavity/case.tex ====
% ==== counterexamples/variance-only-matrix-discrepancy/case.tex ====

\cxtitle{Variance-sensitive Matrix Spencer}

\begin{cxcredits}
\posedby{A.~S.~Bandeira \citeyearpar{Bandeira2016}}
\foundby{GPT-5.5 (Pro)}{2026-05-24}
\formalizedby{E.~Akba\c{s} and S.~Sra \citeyearpar{AkbasSra2026}}
\auditedby{S.~Sra}
\contributedby{S.~Sra}
\end{cxcredits}

\begin{cxcontext}
For symmetric matrices $A_1,\ldots,A_n$ the \emph{discrepancy} is $\disc(A_1,\ldots,A_n)=\min_{x\in\{\pm1\}^n}\|\sum_{i=1}^nx_iA_i\|_{\rm op}$.  Spencer's theorem gives an $O(\sqrt n)$ bound in the scalar case (and thus also for the commuting matrix case); the Matrix Spencer conjecture asks for the same $O(\sqrt n)$ bound for $n$ symmetric contractions of size $n\times n$.  The variance-sensitive strengthening replaces the ambient scale $\sqrt n$ by the intrinsic variance parameter $\|\sum_{i=1}^nA_i^2\|_{\rm op}^{1/2}$, which never exceeds $\sqrt n$ and is often far smaller; it is the parameter that controls the norm of a random signing in matrix concentration, and \citet{kyng2020four} establish the strengthened bound in the rank-one case.
\end{cxcontext}

\begin{cxsource}{variance-only-matrix-discrepancy}
Remark 4.25 of A.~S.~Bandeira's problem collection \citeyearpar{Bandeira2016}.
\end{cxsource}

\begin{cxstatement}{variance-only-matrix-discrepancy}
There exists a universal constant $C$, such that for any choice of $n$ symmetric matrices $A_1,\ldots,A_n\in\R^{n\times n}$ satisfying $\|A_i\|_{\rm op}\le1$ there exists a coloring $x\in\{\pm1\}^n$, such that
\[\left\|\sum_{i=1}^nx_iA_i\right\|_{\rm op}\le C\left\|\sum_{i=1}^nA_i^2\right\|_{\rm op}^{1/2}.\]
\end{cxstatement}

\begin{cxsummary}{variance-only-matrix-discrepancy}
Variance-sensitive Matrix Spencer for $n$ contractions of size $n\times n$ \cite{Bandeira2016}
\end{cxsummary}

\begin{cxcertificate}{variance-only-matrix-discrepancy}
Exact $d=n$ diagonal family: $\disc=(m-1)/\sqrt m$ against variance $1+1/m$, a ratio of $\Theta(\sqrt{\log n})$.
\end{cxcertificate}

\begin{cxrefutation}
The disproof recorded here is not a new contribution of the present manuscript: the family reported below is taken from Theorem A.1 of \citet{AkbasSra2026}, the first refutation of the conjecture; but we couldn't resist including it here as an illustration.

Fix an integer $m\ge2$ and set $n=2^m$, so that the matrices below are $n\times n$ and there are $n$ of them: the dimension is not inflated relative to the number of summands.  Index coordinates by $s=(s_1,\ldots,s_m)\in S:=\{\pm1\}^m$, so $|S|=n$.  Write $p=(1,\ldots,1)$ and $q=(-1,\ldots,-1)$, choose any $U\subseteq S\setminus\{p,q\}$ with $|U|=m$, put $F=S\setminus U$, and fix a bijection $\pi:\{m+1,\ldots,n\}\to F$; note that $p,q\in F$.  Define diagonal matrices
\begin{equation}\label{eq:disc-family}
(A_i)_{s,s}=\frac{s_i}{\sqrt m}\quad(1\le i\le m),
\qquad
A_k=\frac1{\sqrt m}E_{\pi(k),\pi(k)}\quad(m<k\le n),
\end{equation}
where $E_{r,r}$ is the diagonal matrix with a single $1$ in coordinate $r$.  Each $\|A_i\|_{\rm op}=1/\sqrt m\le1$, so the family is admissible.

\begin{theorem}[\statusfalse: variance-sensitive Matrix Spencer]\label{thm:discrepancy}
For the family \eqref{eq:disc-family},
\[
\disc(A_1,\ldots,A_n)=\frac{m-1}{\sqrt m},
\qquad
\left\|\sum_{i=1}^nA_i^2\right\|_{\rm op}^{1/2}=\sqrt{1+\tfrac1m},
\]
so the ratio of the two sides equals $(m-1)/\sqrt{m+1}=\Theta(\sqrt{\log n})$.  No universal constant $C$ can therefore satisfy the variance-sensitive bound, already for $n$ matrices of size $n\times n$.
\end{theorem}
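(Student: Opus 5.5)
Since every $A_i$ in \eqref{eq:disc-family} is diagonal, the plan is to reduce everything to scalar computations, one for each coordinate $s\in S$. I would compute the variance side first, then prove a lower bound on the discrepancy valid for every coloring, and finally exhibit one coloring that attains it.

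\emph{Variance.} For $s\in S$, the $(s,s)$ entry of $\sum_i A_i^2$ is $\sum_{i\le m}s_i^2/m$ plus $1/m$ if $s\in F$. (Each $s\in F$ equals $\pi(k)$ for exactly one $k$, because $\pi$ is a bijection.) This entry is therefore $1+1/m$ on $F$ and $1$ on $U$. Since $F\neq\emptyset$, the operator norm of this diagonal matrix is $1+1/m$, and its square root is $\sqrt{1+1/m}$.

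\emph{Lower bound on the discrepancy.} Fix $x\in\{\pm1\}^n$ and write $y=(x_1,\ldots,x_m)\in S$. The $(s,s)$ entry of $\sum_i x_iA_i$ is
\[
\frac{1}{\sqrt m}\Bigl(\langle y,s\rangle+\mathbf 1[s\in F]\,x_{\pi^{-1}(s)}\Bigr).
\]
I would evaluate this at the coordinate $s=y$. There $\langle y,y\rangle=m$, and the extra term is either absent or equal to $\pm1$. Hence the entry has absolute value at least $(m-1)/\sqrt m$. Consequently every coloring has operator norm at least $(m-1)/\sqrt m$.

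\emph{Matching coloring.} I would take $y=p$, that is, $x_i=1$ for all $i\le m$. I would set $x_{\pi^{-1}(p)}=-1$ and $x_{\pi^{-1}(q)}=+1$; both indices exist because $p,q\in F$. The remaining $x_k$ can be chosen arbitrarily. With this choice, the entry at $s=p$ is $(m-1)/\sqrt m$ and the entry at $s=q$ is $(-m+1)/\sqrt m$. For any other $s$, the sum $\langle p,s\rangle=\sum_i s_i$ satisfies $|\sum_i s_i|\le m-2$, since $s$ has at least one coordinate of each sign. Adding at most one further $\pm1$ keeps the absolute value at most $m-1$. So this coloring achieves norm exactly $(m-1)/\sqrt m$, and the two bounds together give the claimed value of $\disc$.

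\emph{Ratio.} Dividing, the ratio is $\frac{(m-1)/\sqrt m}{\sqrt{(m+1)/m}}=\frac{m-1}{\sqrt{m+1}}$. Since $m=\log_2 n$, this is $\Theta(\sqrt{\log n})$, which is unbounded, so no universal constant $C$ can work.

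The only real subtlety is the upper bound. It requires the two extreme coordinates $p$ and $q$ (where $|\langle p,s\rangle|=m$) to lie in $F$, so that a singleton term can cancel one unit at each of them. This is exactly why $U$ is chosen to avoid $p$ and $q$. I expect no other obstacles.
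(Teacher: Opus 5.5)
Your proposal is correct and follows the paper's proof essentially step for step. The steps match: the diagonal variance computation, the lower bound obtained by evaluating at the coordinate $s=(x_1,\ldots,x_m)$, and the matching coloring with $x_1=\cdots=x_m=1$, $x_{\pi^{-1}(p)}=-1$, $x_{\pi^{-1}(q)}=+1$, which cancels one unit at each of the two extreme coordinates $p,q\in F$.
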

\begin{proof}
Since the matrices are diagonal, so is $\sum_iA_i^2$, and its $s$th entry is
\[
\sum_{i=1}^m\frac{s_i^2}m+\sum_{k=m+1}^n\frac1m\mathbf 1\{s=\pi(k)\}=1+\frac1m\mathbf 1\{s\in F\},
\]
whence $\|\sum_iA_i^2\|_{\rm op}=1+\frac1m$.

For the discrepancy, let $x\in\{\pm1\}^n$ be an arbitrary signing and $X=\sum_{i=1}^nx_iA_i$, so that
\[
X_{s,s}=\frac1{\sqrt m}\sum_{i=1}^mx_is_i+\frac1{\sqrt m}\mathbf 1\{s\in F\}x_{\pi^{-1}(s)}.
\]
Evaluating at the coordinate $s=(x_1,\ldots,x_m)\in S$ makes the first sum equal to $m$, so
\[
\|X\|_{\rm op}\ge|X_{s,s}|\ge\frac{m}{\sqrt m}-\frac1{\sqrt m}=\frac{m-1}{\sqrt m}.
\]
This bound is attained.  Take $x_1=\cdots=x_m=1$ and $x_{k_p}=-1$, $x_{k_q}=1$, where $k_p=\pi^{-1}(p)$ and $k_q=\pi^{-1}(q)$, with the remaining signs arbitrary.  Then $\sqrt mX_{s,s}=\sum_{j=1}^ms_j+\mathbf 1\{s\in F\}x_{\pi^{-1}(s)}$, which equals $m-1$ at $s=p$ and $-(m-1)$ at $s=q$.  For every other $s$ the vector is neither all-plus nor all-minus, so $|\sum_js_j|\le m-2$ and hence $|\sqrt mX_{s,s}|\le m-1$.  Thus $\|X\|_{\rm op}=(m-1)/\sqrt m$, and the infimum over signings is exactly that.

Dividing the two displayed quantities gives $(m-1)/\sqrt{m+1}$, which grows without bound as $m=\log_2n\to\infty$.
\end{proof}

\end{cxrefutation}

% ==== end counterexamples/variance-only-matrix-discrepancy/case.tex ====
% ==== counterexamples/rank-two-mixed-norm/case.tex ====

\cxtitle{A mixed-norm Cauchy--Schwarz question of Sah, Sawhney, Stoner and Zhao}

\begin{cxcredits}
\posedby{A.~Sah et al~\citeyearpar{SahSawhneyStonerZhao2020}, and put to S.~Sra by Y.~Zhao in private communication, May 2018}
\foundby{GPT-5.6 Sol (Pro)}{2026-07-13}
\formalizedby{S.~Sra}
\auditedby{S.~Sra}
\contributedby{S.~Sra}
\end{cxcredits}

\begin{cxcredits}[mixed-norm-general-s]
\foundby{Opus 5}{2026-08}
\end{cxcredits}

\begin{cxcredits}[yufei-psd]
\foundby{GPT-5.6 Sol (Pro)}{2026-07-13}
\end{cxcredits}

\begin{cxcontext}
For a matrix $X \in \C^{m\times n}$ the $\ell_{p,q}$ \emph{mixed norm} is defined as:
\[
\|X\|_{p,q}=\left(\nlsum_{j=1}^n\left(\nlsum_{i=1}^m |x_{ij}|^p\right)^{q/p}\right)^{1/q}.
\]
A matrix is \emph{completely positive} if it has the factorization $YY^T$ for some entrywise nonnegative $Y$. %
\end{cxcontext}

\begin{cxsource}{mixed-norm-general-s}
A.~Sah, M.~Sawhney, D.~Stoner and Y.~Zhao, the open question stated immediately after inequality~(3.1) in \S3
\end{cxsource}

\begin{cxstatement}{mixed-norm-general-s}
We do not know if the inequality can be extended to $\|A^{\mathsf T}B\|_{{p,q}}^2\le\|A^{\mathsf T}A\|_{{q,q}}\|B^{\mathsf T}B\|_{{p,p}}$ for all reals $1\le p\le q$ (this is true for positive integer $p$ by a tensor-power argument).
\end{cxstatement}

\begin{cxsummary}{mixed-norm-general-s}
Extension of the mixed-norm Cauchy--Schwarz inequality to all real $1\le p\le q$ \citep{SahSawhneyStonerZhao2020}
\end{cxsummary}

\begin{cxcertificate}{mixed-norm-general-s}
Nonnegative integer $A,B\in\mathbb Z_{\ge0}^{3\times2}$ at $p=q=3/2$; the deficit is an integer combination of integer square roots, negative by exact algebraic evaluation.
\end{cxcertificate}

\begin{cxsource}{yufei-psd}
A.~Sah, M.~Sawhney, D.~Stoner and Y.~Zhao, the same open question at $B=A$; shared with S.~Sra by Y.~Zhao in private communication, May 2018
\end{cxsource}

\begin{cxstatement}{yufei-psd}
Taking $B=A$ in the question above and writing $Z=AA^{\mathsf T}$ for the resulting completely positive matrix: for all reals $1\le s\le q$ and every completely positive $Z$,
\[\|Z\|_{s,q}^2\le\|Z\|_{s,s}\|Z\|_{q,q}.\]
\end{cxstatement}

\begin{cxsummary}{yufei-psd}
Rank-two completely-positive mixed-norm interpolation at $(s,q)=(6/5,6)$
\end{cxsummary}

\begin{cxcertificate}{yufei-psd}
Explicit 21-vector Gram matrix and interval-separated ratio $>1$.
\end{cxcertificate}

\begin{cxrefutation}
Both Problems~\ref{ctx:mixed-norm-general-s} and~\ref{ctx:yufei-psd} fail by witnesses of quite different character. Problem~\ref{ctx:mixed-norm-general-s} fails at every non-integer exponent as a consequence of the Hadamard-power theory of \citet{FitzGeraldHorn1977} and \citet[Theorem~4.3]{HornMathias1990}; that argument is recorded as Remark~\ref{rmk:fh} below (and it is my own observation, not a connection due to an AI assistant).  Theorem~\ref{thm:mixed-general} adds a minimal explicit witness though. The second witness is larger and interval-certified; it refutes the harder $B=A$ specialization of Problem~\ref{ctx:yufei-psd}. We note that for $p=1$ (and hence every integer valued $p$ by tensoring) the alleged inequalities do hold---\emph{see Appendix~\ref{sec:proofdetails} for details and additional results}.

\begin{theorem}[\statusfalse: explicit disproof, $A\neq B$]\label{thm:mixed-general}
Let $p=q=3/2$ and put
\[
A=\begin{bmatrix}2&4\\5&0\\1&0\end{bmatrix},\qquad
B=\begin{bmatrix}0&5\\4&2\\1&0\end{bmatrix}\ \in\ \mathbb Z_{\ge0}^{3\times2}.
\]
Then $\|A^{\mathsf T}B\|_{p,q}^2>\|A^{\mathsf T}A\|_{q,q}\|B^{\mathsf T}B\|_{p,p}$.
\end{theorem}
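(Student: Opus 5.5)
Since $p=q=3/2$, every mixed norm here collapses to the entrywise $\ell_{3/2}$ norm, $\|X\|_{3/2,3/2}=\bigl(\sum_{ij}|x_{ij}|^{3/2}\bigr)^{2/3}$. Raising both sides of the claimed inequality to the power $3/2$, which is monotone on $[0,\infty)$, the statement becomes equivalent to
\[
\Bigl(\nlsum_{ij}|(A^{\mathsf T}B)_{ij}|^{3/2}\Bigr)^{2}\;>\;\Bigl(\nlsum_{ij}(A^{\mathsf T}A)_{ij}^{3/2}\Bigr)\Bigl(\nlsum_{ij}(B^{\mathsf T}B)_{ij}^{3/2}\Bigr).
\]
This form involves only sums of integer powers $k^{3/2}=k\sqrt k$. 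So the plan has two steps: compute three $2\times2$ integer Gram-type matrices, then certify one inequality between explicit algebraic numbers.

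\emph{Step 1 (the matrices).} Write $a_1=(2,5,1)$, $a_2=(4,0,0)$ for the columns of $A$ and $b_1=(0,4,1)$, $b_2=(5,2,0)$ for those of $B$. Taking dot products gives
\[
A^{\mathsf T}B=\begin{bmatrix}21&20\\0&20\end{bmatrix},\qquad
A^{\mathsf T}A=\begin{bmatrix}30&8\\8&16\end{bmatrix},\qquad
B^{\mathsf T}B=\begin{bmatrix}17&8\\8&29\end{bmatrix}.
\]
The zero entry of $A^{\mathsf T}B$ costs nothing on the left. Meanwhile the off-diagonal $8$'s are small, so their contribution to the right is modest.

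\emph{Step 2 (reduce to surds).} The three sums become
\[
S=21\sqrt{21}+40\sqrt5,\qquad T_1=30\sqrt{30}+32\sqrt2+64,\qquad T_2=17\sqrt{17}+32\sqrt2+29\sqrt{29}.
\]
We must show $S^2-T_1T_2>0$. Numerically $S\approx275.12$ and $S^2\approx 7.569\cdot10^4$, whereas $T_1\approx273.57$, $T_2\approx271.52$ and $T_1T_2\approx7.428\cdot10^4$. The relative gap is therefore about $2\%$, which is comfortably large.

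\emph{Step 3 (exact certification), the only real obstacle.} Rigor requires the comparison to be exact rather than floating-point. I would bracket each square root by rationals, for instance with two or three decimal digits: lower bounds for $\sqrt{21}$ and $\sqrt5$, and upper bounds for $\sqrt{30},\sqrt2,\sqrt{17},\sqrt{29}$. Each bracket is checked by squaring integers; for example $4.582^2<21$ certifies $\sqrt{21}>4.582$. Since all coefficients are positive, $S^2$ is then bounded below, and $T_1T_2$ above, by explicit rationals, and one rational comparison finishes the argument. A margin of about $1400$ on quantities near $7.5\cdot10^4$ leaves ample room for this crude rounding. Alternatively, one can expand $S^2-T_1T_2$ as an integer combination of square roots of integers, as the certificate describes, and evaluate its sign exactly.
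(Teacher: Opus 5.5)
Your proof is correct and follows essentially the paper's route: compute the three Gram-type $2\times2$ matrices, reduce everything to integer combinations of square roots, and certify a sign exactly. The one difference is in what gets certified. The paper proves that the linear deficit $T_1+T_2-2S\approx-5.15$ is negative, which is an integer combination of seven surds. It then uses AM--GM, $T_1+T_2\ge2\sqrt{T_1T_2}$, to conclude $S^2>T_1T_2$. You compare $S^2$ with $T_1T_2$ directly. That is the exact equivalent, loses nothing to AM--GM, and leaves a margin of about $1411$. The cost is that cross products of surds appear. Your one-sided rational brackets handle those, since every coefficient is positive, and I checked that two-decimal brackets already suffice.

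There is one slip to fix. The matrix $A^{\mathsf T}B$ has two entries equal to $20$, and each contributes $20^{3/2}=40\sqrt5$. So $S=21\sqrt{21}+80\sqrt5$, not $21\sqrt{21}+40\sqrt5$. Your value $S\approx275.12$ is the correct one for $21\sqrt{21}+80\sqrt5$. The formula as displayed gives $S\approx185.7$, and Step 3 carried out on it would fail.
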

\begin{proof}
Both matrices are entrywise nonnegative, so they satisfy the hypothesis of the question as posed, not merely the weaker variant in which only the three products are required to be nonnegative.  Those products are
\[
A^{\mathsf T}A=\begin{bmatrix}30&8\\8&16\end{bmatrix},\qquad
B^{\mathsf T}B=\begin{bmatrix}17&8\\8&29\end{bmatrix},\qquad
A^{\mathsf T}B=\begin{bmatrix}21&20\\0&20\end{bmatrix}.
\]
At $p=q=3/2$ we calculate $\|M\|_{3/2,3/2}^{3/2}=\sum_{ij}|m_{ij}|^{3/2}$, and the deficit
\[
\Delta:=\|A^{\mathsf T}A\|_{3/2,3/2}^{3/2}+\|B^{\mathsf T}B\|_{3/2,3/2}^{3/2}-2\|A^{\mathsf T}B\|_{3/2,3/2}^{3/2}
\]
is an integer combination of square roots of integers:
\[
\Delta=64+64\sqrt2+30\sqrt{30}+17\sqrt{17}+29\sqrt{29}-42\sqrt{21}-160\sqrt5 .
\]
Deciding the sign of $\Delta$ is exact algebra over a real field, with no rounding anywhere:
\[
\Delta=-5.150045302282868288532184137690370433502833719528\ldots<0 .
\]
Hence $2\|A^{\mathsf T}B\|_{3/2,3/2}^{3/2}>\|A^{\mathsf T}A\|_{3/2,3/2}^{3/2}+\|B^{\mathsf T}B\|_{3/2,3/2}^{3/2}$, and the AM-GM inequality bounds the right side below by $2\bigl(\|A^{\mathsf T}A\|_{3/2,3/2}\|B^{\mathsf T}B\|_{3/2,3/2}\bigr)^{3/4}$.  Canceling and raising to the power $2/3$,
\[
\frac{\|A^{\mathsf T}B\|_{3/2,3/2}}{\|A^{\mathsf T}A\|_{3/2,3/2}^{1/2}\|B^{\mathsf T}B\|_{3/2,3/2}^{1/2}}=1.00629360763001504125\ldots>1 . \qedhere
\]
\end{proof}

\begin{remark}[Why non-integer exponents fail]\label{rmk:fh}
Let $s\notin\mathbb N$ with $s<n-2$.  By the sharpness half of the FitzGerald--Horn theorem \citep{FitzGeraldHorn1977} there is a positive semidefinite $Z\in\R_{\ge0}^{n\times n}$ whose Hadamard power $Z^{\odot s}$ is not positive semidefinite, so $\sigma^{\mathsf T}Z^{\odot s}\sigma<0$ for some $\sigma$.  Rescaling if necessary, we can take  $\sigma$ to be in $\{\pm1\}^n$; then, splitting the index set along the sign of $\sigma$ write $Z$ as the block Gram matrix of two nonnegative blocks $A,B$, and the displayed quadratic form is exactly $-\Delta$.  This is the mechanism of \citet[Theorem~4.3]{HornMathias1990}, and it produces a counterexample for \emph{every} non-integer $s$. %
\end{remark}

\begin{theorem}[\statusfalse; computer-assisted disproof $A=B$ case]\label{thm:mixed}
There is an explicit $21\times21$ completely positive matrix $Z$ of rank two with
\[
\|Z\|_{s,q}^2>\|Z\|_{s,s}\|Z\|_{q,q},\qquad (s,q)=(6/5,6).
\]
\end{theorem}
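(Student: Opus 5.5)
The plan is to exhibit $Z$ as the Gram matrix of $21$ nonnegative vectors in $\R^2$. Write $Z=YY^{\mathsf T}$ with $Y\in\R_{\ge0}^{21\times 2}$ and rows $y_i=r_i(\cos\theta_i,\sin\theta_i)$, where $\theta_i\in[0,\pi/2]$. Then $z_{ij}=r_ir_j\cos(\theta_i-\theta_j)\ge0$, so $Z$ is completely positive of rank at most two, with equality as soon as two angles differ. Everything reduces to finding angles and weights for which the ratio
\[
R(Z)=\frac{\|Z\|_{s,q}^2}{\|Z\|_{s,s}\|Z\|_{q,q}}
\]
exceeds $1$ at $(s,q)=(6/5,6)$. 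We pick this pair because the inequality is known to hold when $s$ is an integer (Appendix~\ref{sec:proofdetails}), so we want $s$ close to $1$ but non-integer, paired with a large $q$ that amplifies column-norm imbalance. The column $s$-norms are $c_j=\bigl(\sum_i z_{ij}^{6/5}\bigr)^{5/6}$. Hence $\|Z\|_{s,q}=\|c\|_6$, $\|Z\|_{s,s}=\|c\|_{6/5}$, and $\|Z\|_{q,q}$ is the entrywise $6$-norm.

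The search runs as follows. First, parametrize by a coarse grid of angles, with multiplicities encoded through the weights $r_i$. Repeated vectors are allowed, and that is how $21$ rows arise naturally from a handful of distinct directions. Second, maximize $\log R$ by gradient ascent with random restarts over $(r,\theta)$. The function is smooth away from zero entries, and it is scale invariant in $r$, so we fix $\sum r_i=1$. Third, having found a numerical maximizer with $R>1$, rationalize it. We round to rational points on the unit circle, such as Pythagorean directions $((a^2-b^2)/(a^2+b^2),\,2ab/(a^2+b^2))$, and to rational weights. Then every $z_{ij}$ is an exact nonnegative rational, and complete positivity and rank two hold exactly by construction.

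The final step is certification. The quantities involve fractional powers $z^{6/5}$ and $c_j^{6}$, together with $5/6$ and $1/6$ roots, so there is no exact algebraic closed form worth using. Instead we evaluate $\|c\|_6^2$, $\|c\|_{6/5}$ and $\|Z\|_{6,6}$ in outward-rounded interval arithmetic (e.g.\ Sage \texttt{RealIntervalField}). We then verify that the interval for $R(Z)-1$ lies strictly in $(0,\infty)$.

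The main obstacle is the search itself. The $B=A$ case is much more rigid than Theorem~\ref{thm:mixed-general}, and the FitzGerald--Horn mechanism of Remark~\ref{rmk:fh} does not apply directly, since it needs $A\ne B$ blocks and a sign vector. The numerically optimal violation is therefore likely tiny. We must make sure the margin survives rationalization and exceeds the interval widths, so we would reoptimize after rounding, and raise the precision until the enclosure separates from $1$.
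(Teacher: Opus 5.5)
\textbf{There is a genuine gap.} What you have written is a search-and-certify protocol, not a proof. The theorem is an existence statement whose entire content is the witness. Your argument passes through the step ``having found a numerical maximizer with $R>1$,'' which is exactly the conclusion to be established.

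Nothing in the plan shows that the supremum of $\|Z\|_{6/5,6}^2/(\|Z\|_{6/5,6/5}\|Z\|_{6,6})$ over rank-two completely positive $21\times21$ matrices exceeds $1$. This cannot be taken for granted. The inequality holds at integer $s$, at $q=\infty$, and in order at most three (Appendix~\ref{sec:proofdetails}). As you note yourself, the Hadamard-power mechanism of Remark~\ref{rmk:fh} does not transfer to $B=A$.

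When a violation exists it is tiny: the paper's enclosure gives $\|Z\|_{6/5,6}/(\|Z\|_{6/5,6/5}\|Z\|_{6,6})^{1/2}-1\approx 6.2\cdot10^{-7}$. So two questions remain empirical and your proposal does not answer them: whether random-restart gradient ascent over about forty parameters finds such a point, and whether it survives your rational rounding.

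The paper closes exactly this step by exhibiting the data. It uses four groups of rows $w_g^{5/6}y_g$ with multiplicities $(1,18,1,1)$, weights $(46,17,42,1)$ and directions $(1,0),(1,\tfrac1{50}),(1,\tfrac3{50}),(0,1)$, followed by an outward-rounded interval enclosure. Your framing is the same as the paper's: a Gram matrix of nonnegative vectors in $\mathbb R^2$, norms computed through the column $\tfrac65$-norms, and interval certification. Supplying explicit rows of this kind and running the enclosure would complete the argument.

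\textbf{A conceptual slip in your search design.} Multiplicities cannot be ``encoded through the weights.'' Suppose a row $w^{5/6}y$ is repeated $m$ times. The $\ell_{6/5}$ sums depend on that group only through $mw$. The $\ell_6$ sums, both $\|c\|_6^6$ and $\|Z\|_{6,6}^6$, depend on it through $mw^5$. So weight and multiplicity are independent parameters and must be optimized jointly. The paper's witness uses both at once: eighteen copies of weight $17$ against single rows of weight $46$ and $42$. Because you keep $21$ free rows, this does not shrink your search space. But a parametrization that collapsed duplicates into weights would be searching a different and smaller family.
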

\begin{proof}
Let $X$ have 21 rows, grouped as follows.  For each group use the indicated multiplicity $m_g$, weight $w_g$, and direction $y_g$:
\[
\begin{array}{c|cccc}
g&1&2&3&4\\ \hline
m_g&1&18&1&1\\
w_g&46&17&42&1\\
y_g&(1,0)&(1,1/50)&(1,3/50)&(0,1).
\end{array}
\]
Every row in group $g$ is $w_g^{5/6}y_g$, and set $Z=XX^{\mathsf T}$.  Thus $Z\succeq0$, $Z$ is entrywise nonnegative, and $\operatorname{rank}Z=2$; being a Gram matrix of nonnegative vectors, $Z$ is completely positive.  Put
\[
R_Z=\frac{\|Z\|_{6/5,6}}
{\sqrt{\|Z\|_{6/5,6/5}\|Z\|_{6,6}}}.
\]
Direct interval evaluation gives
\begin{align*}
R_Z&\ge 1.00000061733911154365777590600087863798,\\
R_Z&\le 1.00000061733911154365777590600087863800.
\end{align*}
The entire interval lies above one.  The source package regenerates $Z$ from the four groups and evaluates the ratio independently at 100 digits; a Sage interval script supplies outward rounding.
\end{proof}

\end{cxrefutation}

% ==== end counterexamples/rank-two-mixed-norm/case.tex ====
% ==== counterexamples/quantum-coupon-collector/case.tex ====

\cxtitle{Quantum coupon collection: an alternating sum of inverses}

\begin{cxcredits}
\posedby{S.~Sra \citeyearpar{Sra2017QuantumCoupon}}
\foundby{GPT Pro}{2026-02-16}
\formalizedby{GPT-5.6 Pro, which re-derived the exact witnesses in 2026-08}
\auditedby{GPT-5.6 Pro}
\contributedby{S.~Sra}
\end{cxcredits}

\begin{cxcontext}
In the classical coupon collector's problem with sampling probabilities
$x_1,\ldots,x_n>0$, the expected waiting time is the alternating sum
\[
T_n(x_1,\ldots,x_n)=\sum_{k=1}^n(-1)^{k+1}
\sum_{1\le i_1<\cdots<i_k\le n}\frac{1}{x_{i_1}+\cdots+x_{i_k}},
\]
whose positivity can be seen through a clever trick using the fact that $t\mapsto1/t$ is completely monotone on $(0,\infty)$~\citep{FlajoletGardyThimonier1992}.  The statement quoted below asks whether the same holds with the positive numbers replaced by positive definite matrices. %

\end{cxcontext}

\begin{cxsource}{quantum-coupon-collector}
S.~Sra, MathOverflow question 263833 \citeyearpar{Sra2017QuantumCoupon}
\end{cxsource}

\begin{cxstatement}{quantum-coupon-collector}
Here we take symmetric positive definite matrices $X_1,\ldots,X_n\in\mathbf{S}_{++}^d$, and consider the sum
\[
Q_n(X_1,\ldots,X_n):=\sum_{k=1}^n(-1)^{k+1}
\sum_{1\le i_1<\cdots<i_k\le n}\left(X_{i_1}+\cdots+X_{i_k}\right)^{-1}.
\]

\textbf{Conjecture.} For $n\ge1$, and $X_1,\ldots,X_n\in\mathbf{S}_{++}^d$, we have $Q_n(X_1,\ldots,X_n)\succ0$.

\emph{Note.} The cases $n=1,2,3$ are trivially true.  I've tried $n=4$ numerically, and it seems to hold.  (Also, by CCP, positivity immediately holds for simultaneously diagonalizable matrices.)
\end{cxstatement}

\begin{cxsummary}{quantum-coupon-collector}
The matrix coupon-collector sum $\sum_{\emptyset\ne S}(-1)^{|S|-1}X_S^{-1}$ was conjectured positive definite for every order $n$ and every dimension $d$.
\end{cxsummary}

\begin{cxcertificate}{quantum-coupon-collector}
Exact rational $3\times3$ matrices with $n=6$ and the integer vector $(4,1,3)^{\mathsf T}$ give a quadratic form strictly between $-96$ and $-95$.
\end{cxcertificate}

\begin{cxrefutation}
The conjecture holds for a tuple of simultaneously diagonalizable matrices, where it is the classical identity coordinate by coordinate, and the source records that $n\le3$ is trivial and that $n=4$ was checked numerically.  Lemma~\ref{lem:qcc-facet} and Theorem~\ref{thm:qcc-positive-five} below upgrade that record: the conjecture is a theorem for every $d$ and every $n\le5$.  It fails at $n=6$. Let us first present proofs of when it holds before a disproof of the general case.

\begin{lemma}[Facet averaging]\label{lem:qcc-facet}
Let $A_1,\ldots,A_m\in\mathbf{S}_{++}^{d}$, where $m\ge2$, and put $A=\sum_{j=1}^{m}A_j$.  Then
\[
A^{-1}\preceq\frac{m-1}{m^2}\sum_{j=1}^{m}(A-A_j)^{-1}.
\]
\end{lemma}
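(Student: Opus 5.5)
The plan is to derive the facet-averaging inequality from operator convexity of the inverse, applied to a decomposition of $A$ as an average of the ``facet'' sums $B_j:=A-A_j$. Each $A_i$ is omitted from exactly one $B_j$, so
\[
\sum_{j=1}^m B_j=(m-1)A,
\qquad\text{hence}\qquad
A=\frac{1}{m-1}\sum_{j=1}^m B_j=\frac{m}{m-1}\cdot\frac1m\sum_{j=1}^m B_j .
\]
Since $m\ge2$ and every $A_i\succ0$, each $B_j$ is a sum of at least one positive definite matrix. Therefore $B_j\succ0$ and $B_j^{-1}$ is well defined.

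The first step is to use operator convexity of $t\mapsto t^{-1}$ on $\mathbf{S}_{++}^d$, in its Jensen form for a uniform average:
\[
\Bigl(\frac1m\sum_{j}B_j\Bigr)^{-1}\preceq\frac1m\sum_j B_j^{-1}.
\]
This can be quoted directly. If a self-contained argument is preferred, it follows from the two-point midpoint inequality $\bigl(\tfrac{X+Y}{2}\bigr)^{-1}\preceq\tfrac12(X^{-1}+Y^{-1})$. That inequality in turn comes from congruence by $X^{-1/2}$, which reduces it to the scalar case $\bigl(\tfrac{1+\lambda}{2}\bigr)^{-1}\le\tfrac12(1+\lambda^{-1})$. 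The general weighted form then follows by continuity, or one can use the Schur-complement / block-matrix proof that $(X,Y)\mapsto X^{-1}$ is jointly operator convex.

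The second step is the scaling:
\[
A^{-1}=\frac{m-1}{m}\Bigl(\frac1m\sum_jB_j\Bigr)^{-1}\preceq\frac{m-1}{m}\cdot\frac1m\sum_jB_j^{-1}=\frac{m-1}{m^2}\sum_j(A-A_j)^{-1},
\]
which is exactly the claim.

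The only genuine input is the operator convexity of the inverse; everything else is bookkeeping. That convexity is classical, so I do not expect a real obstacle. The point that needs care is the condition $m\ge2$: it guarantees each $B_j\succ0$, and without it the facets would be zero and the argument would fail. As a sanity check, equality holds when all $A_j$ are equal, which confirms that the constant $(m-1)/m^2$ is sharp.
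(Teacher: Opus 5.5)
Your proof is correct and matches the paper's: both set $B_j=A-A_j$, use $\frac1m\sum_j B_j=\frac{m-1}{m}A$, apply operator convexity of the inverse to this uniform average, and rescale. Your added remarks (the congruence reduction to the scalar case, and equality when all $A_j$ coincide) are accurate extras.
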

\begin{proof}
Set $B_j=A-A_j$, which is positive definite because it is a sum of the remaining $A_i$.  Since $\frac1m\sum_{j=1}^{m}B_j=\frac{m-1}{m}A$ and the inversion map is operator convex on $\mathbf{S}_{++}^{d}$,
\[
\frac{m}{m-1}A^{-1}
=\Bigl(\frac1m\sum_{j=1}^{m}B_j\Bigr)^{-1}
\preceq\frac1m\sum_{j=1}^{m}B_j^{-1}.
\]
Rearranging gives the claim.
\end{proof}

For a nonempty $S\subseteq[n]$ we abbreviate $X_S:=\sum_{i\in S}X_i$; then, for fixed $n$, introduce the layer sums
\[
E_k:=\sum_{S\subseteq[n], |S|=k} X_S^{-1},\qquad1\le k\le n,
\]
each of which is positive definite.  Applying Lemma~\ref{lem:qcc-facet} inside each $(k+1)$-subset $T$, with the $A_j$ taken to be the $X_i$ for $i\in T$, gives $X_T^{-1}\preceq\frac{k}{(k+1)^2}\sum_{i\in T}X_{T\setminus\{i\}}^{-1}$.  Summing over all $T$ of size $k+1$, and noting that each fixed $k$-subset arises as a facet of exactly $n-k$ of them, yields
\begin{equation}\label{eq:qcc-layer}
E_{k+1}\preceq\frac{k(n-k)}{(k+1)^2}E_k,\qquad1\le k<n.
\end{equation}

\begin{theorem}[\statusproved: positivity through five variables]\label{thm:qcc-positive-five}
For every $d\ge1$, every $1\le n\le5$, and every $X_1,\ldots,X_n\in\mathbf{S}_{++}^{d}$, one has $Q_n(X_1,\ldots,X_n)\succ0$.
\end{theorem}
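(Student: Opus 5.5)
The plan is to combine the layer inequality \eqref{eq:qcc-layer} with pairing consecutive layers of the alternating sum. Write
\[
Q_n=E_1-E_2+E_3-E_4+E_5,
\]
truncated at $E_n$, and group it as $(E_1-E_2)+(E_3-E_4)+E_5$. When $n$ is odd the last layer $E_n$ is left unpaired, and it is positive definite on its own. For each pair, \eqref{eq:qcc-layer} gives
\[
E_k-E_{k+1}\succeq\Bigl(1-\tfrac{k(n-k)}{(k+1)^2}\Bigr)E_k .
\]
So the whole argument reduces to checking that the constants $c_k:=k(n-k)/(k+1)^2$ satisfy $c_k\le1$ for the odd indices $k$ that occur, and that at least one term is \emph{strictly} positive definite.

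The steps, in order, are as follows.
\begin{enumerate}
\item Record that every $E_k\succ0$, since it is a sum of inverses of positive definite matrices. This handles $n=1$ directly.
\item Tabulate the constants needed in each case:
\begin{enumerate}
\item $n=2$: $c_1=1/4$.
\item $n=3$: $c_1=1/2$, and $E_3$ is unpaired.
\item $n=4$: $c_1=3/4$ and $c_3=3/16$.
\item $n=5$: $c_1=1$ and $c_3=3/8$, and $E_5$ is unpaired.
\end{enumerate}
\item Conclude case by case.
\begin{enumerate}
\item For $n\le4$ every paired difference $E_k-E_{k+1}$ dominates a positive multiple of $E_k\succ0$, and any unpaired $E_n$ is positive definite, so $Q_n\succ0$.
\item For $n=5$ the first pair only yields $E_1-E_2\succeq0$, because $c_1=1$. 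Strictness then comes from the other terms: $E_3-E_4\succeq\tfrac58E_3\succ0$ and $E_5\succ0$, so $Q_5\succ0$.
\end{enumerate}
\end{enumerate}

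The only delicate point is the boundary case $n=5$, $k=1$, where the facet-averaging bound is exactly tight and the first pair gives no positive margin. The plan is to make sure the remaining terms supply strict positivity, which they do. This also shows why the method stops at five: for $n=6$ we get $c_1=5/4>1$, and the pairing argument breaks down. That breakdown is consistent with the counterexample at $n=6$ that the paper announces.
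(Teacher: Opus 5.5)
Your proposal is correct and is essentially the paper's own proof. The paper likewise groups $Q_n$ as $(E_1-E_2)+(E_3-E_4)+E_5$ and applies \eqref{eq:qcc-layer} with the same constants $\tfrac14,\tfrac12,\tfrac34,\tfrac3{16},1,\tfrac38$, obtaining $Q_5\succeq\tfrac58E_3+E_5\succ0$ in the tight case $n=5$, just as you do.
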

\begin{proof}
$n=1$ is immediate.  Write $Q_n=E_1-E_2+E_3-\cdots+(-1)^{n-1}E_n$ and apply \eqref{eq:qcc-layer} to obtain:
\[
\begin{array}{rcl}
n=2:&E_2\preceq\frac14E_1,&Q_2\succeq\frac34E_1,\\[0.35em]
n=3:&E_2\preceq\frac12E_1,&Q_3\succeq\frac12E_1+E_3,\\[0.35em]
n=4:&E_2\preceq\frac34E_1,\quad E_4\preceq\frac3{16}E_3,
&Q_4\succeq\frac14E_1+\frac{13}{16}E_3,\\[0.35em]
n=5:&E_2\preceq E_1,\quad E_4\preceq\frac38E_3,
&Q_5\succeq\frac58E_3+E_5.
\end{array}
\]
Each displayed lower bound is clearly positive definite.
\end{proof}

\begin{theorem}[\statusfalse: quantum coupon collection]\label{thm:quantum-coupon-collector}
The conjecture is false: there are $d=3$, $n=6$ and $X_1,\ldots,X_6\in\mathbf{S}_{++}^{3}$ with $Q_6(X_1,\ldots,X_6)\not\succeq0$, so in particular $Q_6\succ0$ fails.
\end{theorem}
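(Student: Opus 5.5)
The plan is to exhibit an explicit certificate and avoid any asymptotic or abstract argument. Since the statement is existential, it suffices to produce rational matrices $X_1,\ldots,X_6\in\mathbf{S}_{++}^{3}$ with $X_i\succ0$ and an integer vector $v$ such that $v^{\mathsf T}Q_6(X_1,\ldots,X_6)v<0$. The certificate summary suggests $v=(4,1,3)^{\mathsf T}$, with a value between $-96$ and $-95$. Every step can then be done in exact rational arithmetic. Each $X_S$ is a rational $3\times3$ matrix, its inverse is rational by the adjugate formula, and the quadratic form is a finite signed sum of $63$ rationals.

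To find the witness, I would start from the structure exposed by \eqref{eq:qcc-layer} and Theorem~\ref{thm:qcc-positive-five}. Positivity for $n\le5$ came from the layer comparisons. At $n=6$ these give $E_2\preceq\frac54E_1$ and $E_4\preceq\frac{1}{2}E_3$ (from $k=3$: $3\cdot3/16=9/16$, in fact). These constants no longer absorb the negative layers, so there is room. The layer bound is tight only when the facet-averaging inequality of Lemma~\ref{lem:qcc-facet} is nearly an equality. That happens when the $B_j$ are nearly equal, which is the commuting regime where the conjecture holds.

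So the witness should be strongly non-commuting. I would try a few nearly rank-one matrices $X_i=\epsilon I+u_iu_i^{\mathsf T}$ with directions $u_i$ spread in $\R^3$, for example two or three orthogonal frames rotated against each other. The quantity to minimize is $\lambda_{\min}(Q_6)$. I would run a numerical search (random restarts plus local descent over the entries of Cholesky factors) to locate a configuration with $\lambda_{\min}(Q_6)<0$ by a visible margin. Then I would round the entries to small rationals and rescale, so that the witness is short.

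Once the rational witness is in hand, the written proof has three parts. First, verify $X_i\succ0$ by leading principal minors (Sylvester), exactly as in the proof of Theorem~\ref{thm:aim36}. Second, note that every $X_S$ is then positive definite, hence invertible. Third, report the exact rational value of $v^{\mathsf T}Q_6v$ and check that it is negative. A reader can verify this with the accompanying script, but it can also be organized by hand by grouping subsets by size.

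The main obstacle is the search step: finding a configuration where non-commutativity beats the alternating structure. The proof of Theorem~\ref{thm:qcc-positive-five} shows the margin is thin, since lower bounds like $\frac58E_3+E_5$ leave a lot of slack. So the violation should be small relative to the scale of $E_1$. This is also why rounding must be done carefully: after rounding I would recompute exactly and, if needed, adjust $v$ to the rounded minimal eigenvector, scaled to integers.
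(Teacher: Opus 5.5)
\textbf{The proof is missing: no witness is produced.} Your strategy is the paper's own. You use an explicit rational witness of the form $X_i=w_iu_iu_i^{\mathsf T}+\varepsilon I_3$ (regularized, nearly rank-one, strongly non-commuting), a fixed integer test vector, and exact rational evaluation of the $63$ subset terms. But for an existential disproof the witness \emph{is} the proof, and you never produce one.

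The step you call ``the main obstacle'' (numerical search, then rounding, then possibly re-choosing $v$) is left as a plan with no demonstrated outcome. Your heuristics do not close this gap. The layer bounds \eqref{eq:qcc-layer} leave a $-\tfrac14E_1$ term at $n=6$, and you expect the violation to be small relative to $E_1$. These facts only show that the positivity argument of Theorem~\ref{thm:qcc-positive-five} stops working. They do not show that $Q_6$ actually has a negative direction. Likewise, borrowing $v=(4,1,3)^{\mathsf T}$ and the window $(-96,-95)$ from the certificate summary certifies nothing unless you attach the specific matrices those numbers belong to. Two side remarks are also off, though neither is needed for the proof: the $k=3$ layer constant is $9/16$, not $1/2$; and ``the margin is thin, since the lower bounds leave a lot of slack'' contradicts itself.

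\textbf{What to supply.} The paper takes $\varepsilon=1/100$, directions
$u_1,\ldots,u_6=(-1,2,1),(0,-3,1),(-3,-2,0),(2,-2,-2),(-1,0,2),(-1,3,0)$, and weights $(w_1,\ldots,w_6)=(10,10,100,100,100,100)$. Keeping a rational weight $w_i$ separate from an integer direction $u_i$ keeps everything rational without forcing $w_i$ to be a square, as your ansatz $\epsilon I+u_iu_i^{\mathsf T}$ with rational $u_i$ would. Positivity is then immediate from $X_i\succeq\varepsilon I_3\succ0$, so no Sylvester check is needed. Every $X_S$ is positive definite, hence invertible with a rational inverse. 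Exact rational evaluation gives $-96<v^{\mathsf T}Q_6v<-95$ for $v=(4,1,3)^{\mathsf T}$; the paper delegates this evaluation to a rational-arithmetic verifier, just as you propose. With this witness inserted, your three-part verification goes through exactly as you describe.
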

\begin{proof}
Take $d=3$, $n=6$, $\varepsilon=1/100$, and
\[
X_i=w_iu_iu_i^{\mathsf T}+\varepsilon I_3,
\]
where
\[
\begin{array}{c|c|c}
i&u_i^{\mathsf T}&w_i\\\hline
1&(-1,2,1)&10\\
2&(0,-3,1)&10\\
3&(-3,-2,0)&100\\
4&(2,-2,-2)&100\\
5&(-1,0,2)&100\\
6&(-1,3,0)&100
\end{array}
\]
Each $X_i$ is strictly positive definite because $X_i\succeq\varepsilon I_3\succ0$, so the hypotheses are met exactly as posed.  The example is genuinely noncommutative --- the $(1,2)$ entry of $[X_1,X_2]$ equals $-1500$ --- and therefore lies outside the simultaneously diagonalizable regime in which the classical identity applies.  For $v=(4,1,3)^{\mathsf T}$, exact rational evaluation of the sixty-three nonempty-subset terms gives
\[
-96<v^{\mathsf T}Q_6(X_1,\ldots,X_6)v<-95.
\]
The accompanying verifier performs this computation using only rational arithmetic. Thus, $Q_6$ has a negative quadratic form and is not positive semidefinite, let alone positive definite.
\end{proof}

The failure is not confined to the matrix ordering: the weaker scalar consequence $\tr Q_n>0$ fails as well; \emph{see  Appendix~\ref{sec:qcctrace} for more details and commentary}.

\end{cxrefutation}

% ==== end counterexamples/quantum-coupon-collector/case.tex ====
% ==== counterexamples/odonnell-matrix-conjecture/case.tex ====
\cxtitle{Ryan O'Donnell's matrix conjecture}

\begin{cxcredits}
\posedby{R.~O'Donnell and J.~Wright \citeyearpar{ODonnell2015AlmostDiagonal}}
\foundby{GPT-5.6 Pro}{2026-08}
\formalizedby{GPT-5.6 Pro}
\auditedby{GPT-5.6 Pro}
\contributedby{S.~Sra}
\end{cxcredits}

\begin{cxcontext}
For a Hermitian matrix $X$ write $\lVert X\rVert_1=\operatorname{tr}|X|$ for its trace norm.  For $R\in\mathbb C^{n\times n}$, let $D=\operatorname{Diag}(R)=\operatorname{diag}(d_1,\ldots,d_n)$ be its diagonal
part, and let $\Lambda(R)$ be the diagonal matrix carrying the eigenvalues
$\lambda_1\geq\cdots\geq\lambda_n$ in sorted order, so that
\[
  \epsilon:=\lVert D-\Lambda(R)\rVert_1=\nlsum_i|\lambda_i-d_{ii}|.
\]
The source states its conclusion asymptotically, as $\lVert R-D\rVert_1=O(\sqrt\epsilon)$; an
absolute constant in that $O(\cdot)$ is exactly an absolute $c$ with
$\lVert R-D\rVert_1^2\leq c\,\epsilon$, which is the form refuted below.

\end{cxcontext}

\begin{cxsource}{odonnell-matrix-conjecture}
R.~O'Donnell, answering a question of Nengkun Yu on MathOverflow
\cite{ODonnell2015AlmostDiagonal}, reports that he and J.~Wright had been
considering the problem in connection with quantum tomography, and closes by
conjecturing the unit-trace case.  The same conjecture was put to S.~Sra by
O'Donnell in personal communication.
\end{cxsource}

\begin{cxstatement}{odonnell-matrix-conjecture}
The question answered was whether a positive semidefinite $M$ with
$\lVert D(M)-\Lambda(M)\rVert_1\leq\epsilon$ must lie within $2\epsilon$ of
conjectures the normalized case:

\begin{quote}
``If you restrict $M$ to have trace $1$, then we felt the desired result was
true but with a weaker conclusion of $O(\sqrt\epsilon)$ rather than
$2\epsilon$.''
\end{quote}

Written out, and with the implied constant absolute: for every unit-trace
positive semidefinite $R$ with eigenvalues $\lambda_i$ and diagonal entries
$d_{ii}$,
\[
  \lVert R-\operatorname{Diag}(R)\rVert_1^2
  \leq c\nlsum_i |\lambda_i-d_{ii}|
\]
for some absolute constant $c$ independent of $n$.
\end{cxstatement}

\begin{cxsummary}{odonnell-matrix-conjecture}
Every absolute $c$ fails: an exact real-symmetric density-matrix family has ratio greater than $(\log_2 n)/32$.
\end{cxsummary}

\begin{cxcertificate}{odonnell-matrix-conjecture}
An exact algebraic Givens-rotation family with a rational dyadic lower bound, accompanied by a standard-library exact-arithmetic verifier.
\end{cxcertificate}

\begin{cxrefutation}

\begin{theorem}[\statusfalse: O'Donnell's matrix conjecture]\label{thm:odonnell-matrix-conjecture}
For every $C>0$ there exist an integer $n$ and a PSD matrix $R\in\mathbb R^{n\times n}$ with $\operatorname{tr}R=1$ such that, writing $D=\operatorname{Diag}(R)=\operatorname{diag}(d_1,\ldots,d_n)$ and listing the eigenvalues $\lambda_1\geq\cdots\geq\lambda_n$,
\[
  \frac{\lVert R-D\rVert_1^2}
       {\sum_{i=1}^n |\lambda_i-d_i|}>C.
\]
Moreover, both $(\lambda_i)$ and $(d_i)$ are strictly decreasing.
\end{theorem}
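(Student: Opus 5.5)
The plan is to reduce the problem to an explicit one-parameter family indexed by $m$, with $n=2^m$. For that family I will prove a lower bound $\lVert R-D\rVert_1^2/\sum_i|\lambda_i-d_i|\ge m/32$, which exceeds any given $C$ once $m$ is large.

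A single Givens rotation will not do, and neither will a block-diagonal sum of them. A $2\times2$ block with eigenvalues $a>b$, rotated by a small angle $\theta$, has off-diagonal trace norm $\approx2(a-b)\theta$ and diagonal defect $\approx2(a-b)\theta^2$. For a direct sum of such blocks, Cauchy--Schwarz caps the ratio at $2\sum_j(a_j-b_j)\le2$. Rank-one perturbations of $e_1e_1^{\mathsf T}$ likewise give a ratio of only about $2$. The extra $\log n$ therefore has to come from \emph{interaction across scales}.

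\textbf{Construction.} I would take $R=U\Lambda U^{\mathsf T}$ with:
\begin{itemize}
\item $\Lambda$ strictly decreasing, with a dyadic profile: eigenvalues grouped into $m$ blocks of sizes $1,1,2,4,\ldots$, each block carrying total mass about $1/m$, with small rational perturbations to make the values strictly decreasing;
\item $U=G_m\cdots G_1$ a butterfly product of Givens rotations, where level $\ell$ pairs index $i$ with $i+2^{\ell-1}$ and rotates by an angle $\theta_\ell$ tuned to the gap between the paired eigenvalues.
\end{itemize}
The angles should be small enough that, by a second-order expansion, the diagonal entries stay in decreasing order. Then the defect $\sum_i|\lambda_i-d_i|=\sum_i(\lambda_i-d_i)^{\pm}$ can be evaluated exactly. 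It is a sum over levels of terms of the form gap$\cdot\sin^2\theta_\ell$, each of order $1/m$ times $\theta^2$.

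\textbf{Upper bound on the defect.} Writing the defect as $\sum_i(\lambda_i-d_i)$ restricted to the indices where $d_i<\lambda_i$ (twice that sum, since the traces agree), the telescoping over levels can be controlled exactly. I would do this with $\sin^2$ and $\cos^2$ kept as rationals by choosing Pythagorean angles, e.g.\ $\cos\theta=(k^2-1)/(k^2+1)$. That keeps everything exact for the verifier and yields a dyadic rational bound.

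\textbf{Lower bound on the trace norm (the main obstacle).} I would bound $\lVert R-D\rVert_1$ from below by duality, $\lVert X\rVert_1\ge\operatorname{tr}(XS)/\lVert S\rVert_{\rm op}$. The test matrix $S$ would be a signed, Haar-wavelet-like matrix aligned with the first-order off-diagonal terms $\theta_\ell(\text{gap})(e_ie_j^{\mathsf T}+e_je_i^{\mathsf T})$ at every level. The hope is that the level-$\ell$ contributions sit on mutually orthogonal wavelet supports, so that:
\begin{itemize}
\item $\operatorname{tr}(XS)$ adds linearly over the $m$ levels;
\item $\lVert S\rVert_{\rm op}$ stays $O(1)$.
\end{itemize}
This would give $\lVert R-D\rVert_1\gtrsim\sum_\ell(\text{level-}\ell\text{ contribution})$, while the defect is only the sum of the squares. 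Choosing the angles to balance the two, in the Cauchy--Schwarz-extremal way, should then give a ratio of order $m$. Making this orthogonality claim precise, with explicit constants that survive the cross terms from composing rotations, is the step I expect to be hardest. The first thing I would try is choosing $S=USU^{\mathsf T}$-compatible signs so that the cross terms cancel exactly, and then absorbing the remaining error into the constant $1/32$.

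Once both bounds are in hand with exact rational constants, the ratio is at least $m/32=(\log_2n)/32$, and strict monotonicity of $(\lambda_i)$ and $(d_i)$ follows from the smallness of the angles.
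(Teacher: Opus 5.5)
\textbf{There is a genuine gap, and it is structural.} If the defect is controlled the way you propose, no choice of test matrix $S$ can certify more than a constant ratio.

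Write $R=U\Lambda U^{\mathsf T}$ and let $T:=\sum_{i,j}U_{ij}^2|\lambda_i-\lambda_j|$ be the total eigenvalue mass the rotations move. Since $d_i=\sum_jU_{ij}^2\lambda_j$, you have $\epsilon=\sum_i\bigl|\sum_jU_{ij}^2(\lambda_j-\lambda_i)\bigr|\le T\le 2$. Your level-by-level sum of gap$\cdot\sin^2\theta_\ell$ terms is, to leading order, comparable to $T$ itself, with no cancellation.

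Now look at the other side. The matrix $(R-\Lambda)U=U\Lambda-\Lambda U$ has entries $U_{ij}(\lambda_j-\lambda_i)$. Take any $S$ with $\lVert S\rVert_{\rm op}\le1$ and apply Cauchy--Schwarz, using $|\lambda_i-\lambda_j|\le\lambda_i+\lambda_j$:
\[
\bigl|\operatorname{tr}\bigl((U\Lambda-\Lambda U)S\bigr)\bigr|\le\sqrt T\Bigl(\sum_{i,j}S_{ji}^2(\lambda_i+\lambda_j)\Bigr)^{1/2}\le\sqrt{2T}.
\]
The last step holds because every row and column of $S$ has norm at most $1$ and $\operatorname{tr}\Lambda=1$. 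Hence $\lVert R-D\rVert_1\le\sqrt{2T}+\epsilon\le2\sqrt{2T}$, that is, $\lVert R-D\rVert_1^2\le 8T$. So any upper bound on $\epsilon$ of size comparable to $T$ certifies only an $O(1)$ ratio.

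This is the general form of your own direct-sum and rank-one observations, and it explains why the hoped-for wavelet orthogonality cannot hold. Let $c_\ell$ be the total gap used at level $\ell$. Your balancing step mis-scales: level $\ell$ contributes about $2c_\ell\theta_\ell$ to the trace norm but $2c_\ell\theta_\ell^2$ to the defect, not the square of the former. So Cauchy--Schwarz balancing gives at most $2\sum_\ell c_\ell$. That sum is large only if the butterfly reuses indices across levels; for instance, the top index is paired at every level. Contributions sharing a row combine in $\ell_2$, not $\ell_1$. If you avoid reuse, you are back in the direct-sum case with its cap of $2$.

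\textbf{The missing idea is cancellation in the diagonal.} Since the ratio is at most $8T/\epsilon$, you need $\epsilon\ll T$: each interior diagonal position must receive almost exactly what it gives away. Nothing in your butterfly design forces this.

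The paper builds the cancellation in directly:
\begin{itemize}
\item It uses a chain of adjacent Givens rotations on coordinates $(i,i+1)$, with gaps $g_1=\delta(1+q^2)$, $g_i=\delta q^2/i^2$, and $\tan\theta_i=i/q$.
\item These are calibrated so that every rotation transfers exactly $\delta$ from position $i$ to position $i+1$. Consequently $d_i=\lambda_i$ for $2\le i\le n-1$ and $\epsilon=2\delta$, even though $n-1$ transfers occur.
\item The off-diagonal entries $|R_{i,i+1}|\approx\delta q/i$ lie on a path. Pinching to the disjoint blocks $(1,2),(3,4),\ldots$ therefore gives a genuine $\ell_1$ sum, $\lVert R-D\rVert_1>\sqrt2\,\delta q\,O$ with $O=\sum_{i\ \mathrm{odd}}1/i$.
\item The trace constraint forces $\delta=1/\bigl(2(1+q^2H_{n-1})\bigr)$. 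The ratio then exceeds $O^2/\bigl(2(H_{n-1}+1)\bigr)>m/32$.
\end{itemize}
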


\begin{proof}
Fix an integer $m\geq2$ and put $n=2^m$, $q=n$, and $H=H_{n-1}=\sum_{i=1}^{n-1}\frac1i$.  Then, set
$B=1+q^2H$, $\delta=\frac1{2B}$, and $\eta=\frac1{2n}$, and define positive spectral gaps
$g_1=\delta(1+q^2)$, $g_i=\delta q^2/i^2$ for $(2\leq i\leq n-1)$. Then set the eigenvalues as
\[
  \lambda_n=\eta,
  \qquad
  \lambda_i=\eta+\sum_{k=i}^{n-1}g_k
  \quad (1\leq i\leq n-1).
\]
Clearly, $\lambda_1>\cdots>\lambda_n>0$, and $\sum_{i=1}^n\lambda_i =n\eta+\sum_{i=1}^{n-1}i g_i =\frac12+\delta\left(1+q^2+q^2\sum_{i=2}^{n-1}\frac1i\right) =\frac12+\delta B=1$.

Now for $1\leq i\leq n-1$, put
\[
  r_i=\frac qi,
  \qquad
  c_i=\frac{q}{\sqrt{q^2+i^2}},
  \qquad
  s_i=\frac{i}{\sqrt{q^2+i^2}}.
\]
Hence $c_i^2+s_i^2=1$ and $c_i/s_i=r_i$.  Let $G_i$ be the identity except
for the following orthogonal block in coordinates $i,i+1$:
\[
  \begin{pmatrix}
    c_i&-s_i\\
    s_i& c_i
  \end{pmatrix}.
\]
Starting from $R^{(0)}=\Lambda$, define successively
\[
  R^{(i)}=G_iR^{(i-1)}G_i^{\mathsf T},
  \qquad 1\leq i\leq n-1,
\]
and let $R=R^{(n-1)}$.  This is an exact algebraic matrix.  Since it is
orthogonally similar to $\Lambda$, it is real symmetric, positive semidefinite,
has trace one, and has eigenvalues $(\lambda_i)$.

We next compute its diagonal exactly.  Before the first rotation, the active
diagonal gap is
\[
  \lambda_1-\lambda_2=g_1=\delta(1+r_1^2).
\]
Before rotation $G_i$ for $i\geq2$, coordinate $i$ has diagonal entry
$\lambda_i+\delta$, coordinate $i+1$ has diagonal entry $\lambda_{i+1}$,
and coordinate $i+1$ has not previously been touched.  Thus the active
$2\times2$ principal block is diagonal and its diagonal gap is
\[
  (\lambda_i+\delta)-\lambda_{i+1}
  =g_i+\delta
  =\delta(1+r_i^2).
\]
In both cases, $s_i^2\,\delta(1+r_i^2)=\delta$. Consequently, $G_i$ transfers exactly $\delta$ from diagonal position $i$ to
position $i+1$.  Induction gives
\[
  d_1=\lambda_1-\delta,
  \qquad
  d_i=\lambda_i\quad(2\leq i\leq n-1),
  \qquad
  d_n=\lambda_n+\delta.
\]
These entries are strictly decreasing: at the first edge the remaining gap is
$g_1-\delta=\delta q^2>0$, at every interior edge it is $g_i>0$, and at the
last edge it is
\[
  g_{n-1}-\delta
  =\delta\left(\frac{q^2}{(n-1)^2}-1\right)>0
\]
because $q=n$.  Therefore $\sum_{i=1}^n|\lambda_i-d_i|=2\delta$.
The same rotation calculation gives useful off-diagonal entries.  At
step $i$, the newly created $(i,i+1)$ entry has magnitude $c_is_i\,\delta(1+r_i^2)=\delta r_i$. For $i\leq n-2$, the next rotation multiplies this entry by $c_{i+1}$; all
later rotations leave it unchanged.  Hence
\[
  |R_{i,i+1}|=
  \begin{cases}
    \delta r_i c_{i+1},&1\leq i\leq n-2,\\
    \delta r_{n-1},&i=n-1.
  \end{cases}
\]
Let $A=R-D$.  Pinch $A$ to the disjoint coordinate blocks
$(1,2),(3,4),\ldots,(n-1,n)$.  Trace norm is contractive under pinching, and
each retained zero-diagonal $2\times2$ block has trace norm twice the magnitude
of its off-diagonal entry.  If
\[
  O=\sum_{\substack{1\leq i\leq n-1\\ i\ \mathrm{odd}}}\frac1i,
\]
then $q=n$ implies $c_j^2=\frac{q^2}{q^2+j^2}>\frac12 \qquad(1\leq j\leq n-1)$. Using the multiplier $1>1/\sqrt2$ for the final edge as well, we obtain
\[
  \lVert A\rVert_1
  \geq 2\sum_{\substack{1\leq i\leq n-1\\ i\ \mathrm{odd}}}|R_{i,i+1}|
  >\sqrt2\,\delta q O.
\]
It follows that
\[
\begin{aligned}
  \frac{\lVert R-D\rVert_1^2}
       {\sum_i|\lambda_i-d_i|}
  &>\frac{2\delta^2q^2O^2}{2\delta}
    =\delta q^2O^2
    =\frac{q^2O^2}{2(1+q^2H)}\quad>\quad\frac{O^2}{2(H+1)}.
\end{aligned}
\]

It remains only to estimate the two harmonic sums.  In each dyadic block
$[2^k,2^{k+1})$, for $1\leq k\leq m-1$, there are $2^{k-1}$ odd integers,
each contributing more than $2^{-(k+1)}$.  Therefore, $O>1+\frac{m-1}{4}=\frac{m+3}{4}$, and also $ H_{n-1}<1+\log(n-1)<m+1$. Consequently,
\[
  \frac{\lVert R-D\rVert_1^2}
       {\sum_i|\lambda_i-d_i|}
  >\frac{(m+3)^2}{32(m+2)}
  >\frac{m}{32}.
\]
Given any real $C>0$, choose an integer $m>32C$.  The resulting exact matrix
has ratio greater than $C$, proving that no dimension-independent constant can
exist.
\end{proof}

\end{cxrefutation}

% ==== end counterexamples/odonnell-matrix-conjecture/case.tex ====

% ==== end tex/cases.tex ====

% ==== tex/07-conclusion.tex ====
\section{Conclusion}

\noindent The archive behind this manuscript currently holds eighteen refuted statements across fourteen cases; sixteen of them reduce to exact rational arithmetic and two to rigorous interval enclosures, and every one recomputes from source in the accompanying repository (several of the refutations are in additional supported by an analytic proof too, not just numerical). %

\vskip5pt
\noindent What we find most interesting, though, is that the better counterexamples say more than ``false.'' They locate a boundary. The quantum coupon-collector sum is positive definite through five variables in every dimension and fails at six; the mixed-norm inequality holds at every positive integer exponent and fails at every non-integer one; and in the same rank-two instance where Schur convexity of the $t^\delta$-normalized Macdonald ratio fails, the $1^n$-normalized ratio satisfies the very inequality violated. A refutation of that kind does not close a subject so much as move it, and the corrected question it leaves behind is usually the one worth asking next.

% ==== end tex/07-conclusion.tex ====

\vspace*{1cm}
\subsection*{Acknowledgments}
The author acknowledges generous support from the Alexander von Humboldt (AvH) foundation via an AvH Professorship in AI. I'd like to also thank Apoorva Khare (IISC) for suggesting that I write up all the progress in one document, which was instrumental in helping me overcome the \emph{``writer's block''} from not wanting to write 10+ separate preprints!

\subsection*{Statement on AI Usage}
I used Claude Code (Opus 5) to architect the document and the repo. Initial drafts were assembled by Claude using some of my GPT Pro transcripts as well as from my own existing \LaTeX\ code. However, I rewrote the bulk of the main document (excluding the appendix) myself to improve clarity and precision, hoping to also retain \emph{``my own voice.''} The bulk of the credit for the counterexamples reported herein goes to GPT. Moreover, to avoid meta-discourse I'm refraining from dwelling too much on the session prompts---some counterexamples were found in a response to a single-shot prompt, others after an elaborate chat session. That said, I've taken the opportunity to piggyback onto this document some related results that are interesting enough to share and were actually found ``by hand''---I was too lazy to write up as separate preprints, so now they are reported in the appendix; their descriptive text notes their human origin.

\cxendrule
\par\addvspace{\baselineskip}
{\centering\scshape Appendices\par}
\addvspace{.5\baselineskip}
\begin{itemize}[label={},leftmargin=2.2em,itemsep=0.2em]
\item[\ref{sec:additional}.] Additional counterexamples \dotfill \pageref{sec:additional}
\item[\ref{sec:proofdetails}.] Positive results for the mixed-norm Cauchy--Schwarz inequality \dotfill \pageref{sec:proofdetails}
\item[\ref{sec:aim36pencil}.] Specht modules and Borcea--Br\"and\'en's Problems 36, 37 \dotfill \pageref{sec:aim36pencil}
\item[\ref{sec:macdonaldomega}.] Additional results for McSwiggen--Sahi Theorem 2.1 \dotfill \pageref{sec:macdonaldomega}
\item[\ref{sec:qcctrace}.] Some refinements of the quantum coupon collection conjecture \dotfill \pageref{sec:qcctrace}
\item[\ref{sec:lorentzianaudit}.] Two new three-point results for hyperbolic polynomials \dotfill \pageref{sec:lorentzianaudit}
\end{itemize}

\appendix
% arXiv snapshot: the appendices of tex/main.tex, flattened from tex/ at commit 9ef22b8-dirty
% by tools/flatten_arxiv.py (`make arxiv`).  Every \input is inlined;
% comments and disabled blocks are removed.  Edit tex/, not this file.

% ==== tex/09-additional.tex ====
\section{Additional counterexamples}
\label{sec:additional}
We list some additional counterexamples below. They are not in the main body, either because by the time we wrote them  down others had also found their own counterexamples, or the statement under study did not meet all our admission criteria (e.g., public availability of the conjecture).

\cxlevel{subsection}
% ==== counterexamples/courtade-volume-conjecture/case.tex ====

\cxtitle{Courtade's volume conjecture for Minkowski sums is false}

\begin{cxcredits}
\posedby{Thomas A. Courtade \citeyearpar{Courtade2017Concavity}}
\foundby{Suvrit Sra, by hand}{2022-05}
\formalizedby{Claude Fable 5}
\auditedby{Claude Fable 5}
\contributedby{Suvrit Sra}
\end{cxcredits}

\begin{cxcontext}
For two sets $K,L\subset\R^d$ the Minkowski sum is the set
$K+L :=\{x+y:x\in K,\ y\in L\}$. The statement below is the closing question of \citet{Courtade2017Concavity};
the ``Theorem 1'' refers to Courtade's similarly structured entropy-power inequality, of which he conjectured a volume analogue.  Dropping $V(K)^{1/d}V(L)^{1/d}$ weakens his question to the \emph{log-submodularity} of volume under Minkowski
summation, $V(B)V(K+L+B)\le V(K+B)V(L+B)$, a property with its own literature;
see the remarks after the proof. Following Courtade, we'll locally also use the notation $|K| \equiv V(K)$.
\end{cxcontext}

\begin{cxsource}{courtade-volume-conjecture}
Closing remarks (Section~IV) of \citet{Courtade2017Concavity}, posed as a
question for future work; no problem number is assigned there.
\end{cxsource}

\begin{cxstatement}{courtade-volume-conjecture}
In particular, for $K$, $L$, $B$ bounded convex sets in $\R^d$, does it hold
that:
\[
|K+L+B|^{1/d}|B|^{1/d}+|K|^{1/d}|L|^{1/d}\le|K+B|^{1/d}|L+B|^{1/d},
\]
where $|\cdot|$ denotes $d$-dimensional volume?
\end{cxstatement}

\begin{cxsummary}{courtade-volume-conjecture}
Courtade's geometric analogue of entropy-power concavity:
$|K{+}L{+}B|^{1/d}|B|^{1/d}+|K|^{1/d}|L|^{1/d}\le|K{+}B|^{1/d}|L{+}B|^{1/d}$
for bounded convex sets.
\end{cxsummary}

\begin{cxcertificate}{courtade-volume-conjecture}
A zonotope with seven integer generators and two segments; four integer
volumes with $68\cdot44=2992<3024=28\cdot108$.
\end{cxcertificate}

\begin{cxrefutation}
Problem~\ref{ctx:courtade-volume-conjecture} asks for the geometric analogue
of the entropy-power inequality that \citet{Courtade2017Concavity} proves for
Gaussian perturbations: volumes of Minkowski sums should play the role of
entropy powers of independent sums.  For $d=1$ the two sides agree
identically, and for $d=2$ the inequality is true
\citep{FradeliziMadimanMeyerZvavitch2024}.  It fails from dimension three
onwards.  We give a fully explicit witness in the zonotope calculus, where
every volume in sight is an integer.

\begin{theorem}[\statusfalse: Courtade's volume conjecture]
\label{thm:courtade-volume-conjecture}
For every $d\ge4$ there are bounded convex sets $K,L,B\subset\R^d$ with
\[
|K+L+B|^{1/d}|B|^{1/d}+|K|^{1/d}|L|^{1/d}>|K+B|^{1/d}|L+B|^{1/d}.
\]
One may take $B$ a zonotope and $K$, $L$ segments; in $\R^4$, $B$ has seven
integer generators and all three sets may moreover be taken full-dimensional.
\end{theorem}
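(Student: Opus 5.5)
Since $K$ and $L$ will be segments, $|K|=|L|=0$ for $d\ge2$. The inequality in Problem~\ref{ctx:courtade-volume-conjecture} then reduces to log-submodularity, $|K+L+B|\,|B|\le|K+B|\,|L+B|$, after raising both sides to the power $d$. So it suffices to violate this product inequality strictly, and then to promote the witness to full-dimensional sets and to all $d\ge4$.

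\textbf{Step 1: zonotope volume formulas.} Let $B=\sum_{j}[0,b_j]$ be a zonotope with generators $b_1,\dots,b_N\in\mathbb Z^4$, and let $K=[0,u]$, $L=[0,v]$. By the standard zonotope volume formula, $|B|=\sum_{|S|=4}|\det(b_S)|$. Put $\alpha=\sum_{|S|=3}|\det(b_S,u)|$, $\beta=\sum_{|S|=3}|\det(b_S,v)|$ and $\gamma=\sum_{|S|=2}|\det(b_S,u,v)|$. Then
\[
|K+B|=|B|+\alpha,\qquad |L+B|=|B|+\beta,\qquad |K+L+B|=|B|+\alpha+\beta+\gamma .
\]
All four quantities are integers, and the desired strict violation is equivalent to the single inequality $\gamma\,|B|>\alpha\beta$.

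\textbf{Step 2: the search.} This is the main obstacle. The aim is to make the mixed term $\gamma$ large, meaning many generator pairs complete $u,v$ to a well-conditioned basis, while keeping $\alpha$ and $\beta$ small, meaning most generator triples are nearly coplanar with $u$, or with $v$.

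My first attempt would be seven generators, chosen so that $B$ is roughly a product of a zonogon in the $(e_3,e_4)$-plane with a zonogon in the $(e_1,e_2)$-plane, plus a few tilted generators, and with $u,v$ placed in the first plane. In a pure product configuration, $\alpha\beta$ and $\gamma|B|$ essentially balance, since in $d=2$ the inequality is known to hold \citep{FradeliziMadimanMeyerZvavitch2024}. The tilted generators are what should break the balance in dimension four.

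I would run a small integer search, with entries in $\{-2,\dots,2\}$, over such configurations, and stop at one achieving integers like $|B|=28$, $|K+B|=68$, $|L+B|=44$, $|K+L+B|=108$. Then I would verify the certificate $108\cdot28=3024>2992=68\cdot44$ by exact enumeration of the $\binom74$, $\binom73$ and $\binom72$ determinants.

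\textbf{Step 3: full-dimensionality and all $d\ge4$.} For full-dimensional sets, replace $K$ and $L$ by $K_\varepsilon=K+\varepsilon B_2^4$ and $L_\varepsilon=L+\varepsilon B_2^4$. All four volumes, including $|K_\varepsilon|^{1/4}|L_\varepsilon|^{1/4}$, depend continuously on $\varepsilon$, and $|K_\varepsilon|,|L_\varepsilon|\to0$. Hence the strict violation persists for small $\varepsilon>0$. Since $B$ already has full rank, it is already full-dimensional.

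For $d>4$, take $B'=B\times[0,1]^{d-4}$, $K'=K\times\{0\}$ and $L'=L\times\{0\}$. Minkowski sums factor across the product, so every relevant volume is multiplied by $1$, and $\gamma|B|>\alpha\beta$ is unchanged. Finally, apply the same $\varepsilon$-thickening in $\mathbb R^d$ to make all three sets full-dimensional.
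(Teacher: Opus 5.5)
There is a genuine gap, and it sits in Step~2, which is the whole content of the theorem.

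\textbf{What is correct.} Your reduction is sound. With $|K|=|L|=0$, the claim is exactly a strict failure of $|B|\,|K+L+B|\le|K+B|\,|L+B|$. The zonotope formula gives $|K+B|=|B|+\alpha$, $|L+B|=|B|+\beta$ and $|K+L+B|=|B|+\alpha+\beta+\gamma$. The violation is therefore equivalent to $\gamma|B|>\alpha\beta$.

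Step~3 also works. Thickening by $\varepsilon B_2^4$ with a continuity argument is valid. The paper instead thickens by $[0,\varepsilon]^4$, but only so that every volume stays an exact rational. Padding by $[0,1]^{d-4}$ is what the paper does too.

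\textbf{The gap.} You never exhibit generators $b_1,\dots,b_7$ and segments $u,v$. ``Run a small integer search and stop at one with volumes $28,68,44,108$'' is a plan, not a proof of existence. It presupposes that such a configuration exists, which is exactly what has to be shown.

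Your heuristic does not fill this hole. For an exact product $B=Z_{34}\times Z_{12}$ with $u,v$ in the $(e_3,e_4)$-plane, all four volumes scale by $|Z_{12}|$. The inequality then reduces to the planar one, which holds. So the heuristic gives no reason to expect that a tilted perturbation will tip the balance the wrong way, and no guarantee that the search succeeds.

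\textbf{How to close it.} Write down a concrete configuration and do the finite determinant count: $35$ quadruples for $|B|$, $35$ triples each for $\alpha$ and $\beta$, and $21$ pairs for $\gamma$. The paper's witness uses generators
\[
e_1,\ e_2,\ e_3,\ e_4,\ (0,1,-1,1),\ (1,0,-1,1),\ (0,0,1,1),
\]
with $u=(1,-1,1,1)$ and $v=(0,0,-1,1)$. In your notation this gives $|B|=28$, $\alpha=40$, $\beta=16$ and $\gamma=24$. Hence $\gamma|B|=672>640=\alpha\beta$, which is exactly $28\cdot108=3024>2992=68\cdot44$.

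With that explicit witness and its certified counts in place, Steps~1 and~3 go through unchanged.
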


\begin{proof}
A zonotope $Z(v_1,\dots,v_m)=\sum_{i\le m}[0,v_i]\subset\R^d$ has volume
$\sum_S|\det(v_S)|$, the sum running over all $d$-element subsets $S$ of the
generators \citep{Shephard1974}, and Minkowski summation simply concatenates
generator lists.  In $\R^4$ let $B$ be the zonotope with the seven columns of
\[
G=\begin{pmatrix}
1&0&0&0&0&1&0\\
0&1&0&0&1&0&0\\
0&0&1&0&-1&-1&1\\
0&0&0&1&1&1&1
\end{pmatrix}
\]
as generators, and let $K=[0,b]$ and $L=[0,c]$ be the segments with
$b=(1,-1,1,1)$ and $c=(0,0,-1,1)$.  Summing $|\det|$ over the
$\binom{7}{4}=35$ generator quadruples of $B$ (eleven are singular, twenty
have $|\det|=1$, four have $|\det|=2$) gives $|B|=28$, and likewise over the
$\binom{8}{4}=70$ quadruples of $B+K$ and of $B+L$ and the
$\binom{9}{4}=126$ of $B+K+L$:
\[
|B|=28,\qquad|K+B|=68,\qquad|L+B|=44,\qquad|K+L+B|=108.
\]
Since $|K|=|L|=0$, the conjectured inequality asserts
$(28\cdot108)^{1/4}\le(68\cdot44)^{1/4}$.  Raising to the fourth power, which
is monotone on nonnegative reals, this is $3024\le2992$ --- false.

For full-dimensional sets in $\R^4$, thicken each segment to a box: with
$\varepsilon=\tfrac1{100}$ replace $K$ by
$K_\varepsilon=[0,b]+[0,\varepsilon]^4$ and $L$ by
$L_\varepsilon=[0,c]+[0,\varepsilon]^4$, both zonotopes with five generators
and positive volume ($|K_\varepsilon|=\tfrac{401}{10^8}$,
$|L_\varepsilon|=\tfrac{201}{10^8}$).  The same generator sums give the exact
rationals
\[
|K_\varepsilon+B|=\tfrac{6926731601}{10^8},\qquad
|L_\varepsilon+B|=\tfrac{4484551401}{10^8},\qquad
|K_\varepsilon+L_\varepsilon+B|=\tfrac{696978401}{6250000},
\]
whence
$|B|\,|K_\varepsilon+L_\varepsilon+B|-|K_\varepsilon+B|\,|L_\varepsilon+B|
=\tfrac{161348459184476999}{10^{16}}>0$.  In fourth powers the conjectured
inequality fails a fortiori, its left side now carrying the additional
positive term $|K_\varepsilon|^{1/4}|L_\varepsilon|^{1/4}$.

For $d>4$ pad only the common summand: replace $B$ by
$B\times[0,1]^{d-4}$ and embed $K$, $L$ as the segments
$[0,(b,0)]$, $[0,(c,0)]$.  The three sums then acquire the same cube factor,
so all four volumes above are unchanged, and $|K|=|L|=0$ still; the same four
integers refute the inequality in $\R^d$.  Every quantity displayed in this
proof is an integer or an exact rational, recomputed by
\texttt{verify.py} in integer respectively rational arithmetic, and
cross-checked in SageMath, which rebuilds each Minkowski sum as a vertex
polytope over $\mathbb{Q}$ and computes its volume by triangulation instead.
\end{proof}

\noindent\textbf{The author's own (non AI) proof.}
Unlike most of this archive, this refutation was first derived by hand,
in May 2022 (independently of, and essentially contemporaneously with, the
arXiv posting of \citet{FradeliziMadimanZvavitch2024} in June 2022; no priority is claimed) by a three-step reduction that shows failure for every
$d\ge3$ without exhibiting a witness.  It is the shortest disproof of
Problem~\ref{ctx:courtade-volume-conjecture} we know of, so we record it in
full.  Throughout, $a,b,c,h$ are convex bodies in $\R^d$, $V(a)=|a|$, and
$V(a[d{-}k],h[k])$ is the mixed volume with $a$ repeated $d-k$ times.

\textcolor{darkblue}{\textbf{\emph{Step 1: drop a term.}}}  Applied to $(K,L,B)=(b,c,a)$ and stripped of the
nonnegative term $|b|^{1/d}|c|^{1/d}$, the conjecture asserts
$V(a)\,V(a+b+c)\le V(a+b)\,V(a+c)$, which rearranges to
\[
f_b(a+c)\ \ge\ f_b(a)
\qquad\text{for}\qquad
f_b:\ a\ \longmapsto\ \frac{V(a)}{V(a+b)}:
\]
for every fixed $b$, the functional $f_b$ must be non-decreasing along
Minkowski addition.

\textcolor{darkblue}{\textbf{\emph{Step 2: linearize.}}}  The expansion
$V(a+th)=\sum_{k\le d}\binom dk\,t^k\,V(a[d{-}k],h[k])$ is a polynomial in
$t\ge0$ with $\tfrac{d}{dt}V(a+th)\bigl|_{t=0}=d\,V(a[d{-}1],h)$, so
$t\mapsto\log f_b(a+th)$ is differentiable; by Step~1 (applied at each base
point $a+th$, with increment a multiple of $h$) it is non-decreasing, so its
derivative at $t=0$ is nonnegative:
\[
\frac{V(a[d{-}1],h)}{V(a)}\ \ge\ \frac{V((a{+}b)[d{-}1],h)}{V(a+b)}
\qquad\text{for all bodies }a,b,h.
\]

\textcolor{darkblue}{\textbf{\emph{Step 3: take $h$ a ball.}}}  Since $d\,V(a[d{-}1],B_2^d)=|\partial a|$ is
surface area, the display at $h=B_2^d$ is exactly the monotonicity
$I(a+b)\ge I(a)$ of the information functional
$I(\cdot)=|\cdot|/|\partial\,\cdot|$ of \citet{DemboCoverThomas1991}.
\citet[Section~4.1]{ArtsteinFlorentinOstrover2014} exhibit, for every $d\ge3$,
an interval $b$ and a truncated box $a$ with $I(a+b)<I(a)$ --- and prove that
monotonicity does hold for $d=2$, consistent with the planar case of the
conjecture being true.  Undoing Step~2, the strictly negative derivative gives
a $t>0$ with $V(a+b)\,V(a+tB_2^d)<V(a)\,V(a+b+tB_2^d)$, and the conjecture
fails at $(K,L,B)=(b,\,tB_2^d,\,a)$.

The explicit zonotope witness above was constructed for this archive in August
2026, so that the case is self-contained and exactly recomputable; it lands in
$d=4$ of necessity, since log-submodularity of volume holds for zonoids in
$d\le3$ \citep{FradeliziMadimanMeyerZvavitch2024}.

\vskip4pt
\noindent\textbf{What holds instead: hyperbolic polynomials}
Replace $\Vol^{1/d}$ on convex bodies by $p^{1/m}$ on the closed hyperbolicity cone
of a hyperbolic polynomial $p$ of degree $m$. Then, the four-point inequality becomes a theorem, since
Theorem~\ref{thm:lor-fourpoint} in Appendix~\ref{sec:lorentzianaudit} proves
\[
p^{1/m}(a+b)\,p^{1/m}(a+c)\ \ge\ p^{1/m}(a)\,p^{1/m}(a+b+c)+p^{1/m}(b)\,p^{1/m}(c),
\]
extra term included. The weaker inequality without the $p^{1/m}(b)\,p^{1/m}(c)$ was first observed by the author (S.~Sra) here~\citep{sra2017MO}. %

\vskip4pt
\noindent\textbf{Scope, and others' counterexamples.} The conjecture has by now been refuted several times over.  The sharp
Pl\"unnecke--Ruzsa constants of \citet{FradeliziMadimanZvavitch2024} already
imply that log-submodularity --- hence the stronger inequality here --- fails
for general convex bodies in every dimension $d\ge3$.  In August 2026,
\citet{Skorupinski2026} independently gave a zonotope counterexample to
log-submodularity in $\R^4$, with volume vector
$(|B|,|K{+}B|,|L{+}B|,|K{+}L{+}B|)=(14,30,26,56)$.  The witness above came
from an independent random search over integer generators, which turned up
dozens of violating configurations --- among them, rediscovered up to a
unimodular transformation, Skorupinski's own witness with its volume vector
$(14,30,26,56)$ --- so such violations are not isolated.  The configuration
kept here is a different one: its volume vector $(28,68,44,108)$ is not
proportional to his, so the two are not affinely equivalent, and its relative
margin is about twice as large.  What none of these witnesses
touch is the variant of the question in which the common summand $B$ is a
Euclidean ball --- the case closest to $W$ Gaussian in the entropy-power
inequality it mirrors, and one where the linearization above is provably
monotone \citep{DemboCoverThomas1991}.
\end{cxrefutation}

% ==== end counterexamples/courtade-volume-conjecture/case.tex ====
% ==== counterexamples/lorentzian-jensen/case.tex ====

\cxtitle{Log-volume midpoint gap and its Lorentzian generalization}

\begin{cxcredits}
\posedby{S.~Sra}
\foundby{GPT-5.6 Pro}{July 31, 2026}
\formalizedby{S.~Sra}
\auditedby{S.~Sra}
\contributedby{S.~Sra}
\end{cxcredits}

\begin{cxcontext}
\noindent\textbf{Note:} I could not locate the chat but I believe GPT-5.5 Pro already broke this earlier in 2026.
\vskip 8pt

For convex bodies, $K+L=\{k+\ell:k\in K,\ \ell\in L\}$ is the Minkowski sum and $\Vol$ is Lebesgue volume.  Recall that $d$ is a \emph{semimetric} when it is symmetric, nonnegative, vanishes only on equal arguments, and satisfies the triangle inequality; the triangle inequality is the property that fails below.  A homogeneous polynomial with nonnegative coefficients is \emph{Lorentzian} in the sense of Br\"and\'en and Huh \citeyearpar{BrandenHuh} if all its second-order directional derivatives, taken at positive arguments, are quadratic forms of Lorentzian signature $(+,-,\ldots,-)$; in two variables, a form $\sum_{k}a_kx^ky^{d-k}$ is strictly Lorentzian exactly when the normalized coefficients $a_k/\binom dk$ are strictly log-concave, which is the criterion used below.  Lorentzian polynomials are log-concave on the positive orthant, so $F=-\log G$ is convex there and every midpoint Jensen gap $\J_G$ is nonnegative.  On the cone of positive definite matrices the corresponding statement is a theorem: there $\J_{\det}$ is the squared S-divergence, whose square root is a metric \citep{SraSdiv}, and the same holds for a parameterized family of symmetric divergences on that cone \citep{SraJSMetrics2021}.
\end{cxcontext}

\begin{cxsource}{log-volume-distance}
S.~Sra, standalone problem posed in 2012; formulation as recorded in the audit of Appendix~\ref{sec:lorentzianaudit}
\end{cxsource}

\begin{cxstatement}{log-volume-distance}
The log-volume midpoint gap is the square of a distance on convex bodies: that is,
\[d_{\Vol}(K,L):=\left[\log\Vol\!\left(\frac{K+L}{2}\right)-\tfrac12\log\Vol(K)-\tfrac12\log\Vol(L)\right]^{1/2}\]
is a semimetric on convex bodies.
\end{cxstatement}

\begin{cxsummary}{log-volume-distance}
Log-volume midpoint gap as a squared distance on convex bodies \cite{Shephard}
\end{cxsummary}

\begin{cxcertificate}{log-volume-distance}
Strictly Lorentzian rational cubic realized as a Steiner polynomial in $\R^3$; exact rational triangle-violation bound $>7.15\times10^{-3}$.
\end{cxcertificate}

\begin{cxsource}{lorentzian-jensen}
S.~Sra, standalone problem posed in 2019 as a Lorentzian generalization of the log-volume statement; formulation as recorded in the audit of Appendix~\ref{sec:lorentzianaudit}
\end{cxsource}

\begin{cxstatement}{lorentzian-jensen}
Let $\mathcal C\subset\R^m$ be an open convex cone, let $G:\mathcal C\to(0,\infty)$, and put $F=-\log G$ and
\[\J_G(x,y):=\frac{F(x)+F(y)}2-F\!\left(\frac{x+y}{2}\right).\]
If $G$ is homogeneous and Lorentzian, then $d_G:=\sqrt{\J_G}$ is a semimetric.
\end{cxstatement}

\begin{cxsummary}{lorentzian-jensen}
Lorentzian Jensen--Bregman metric principle: $\sqrt{\J_G}$ a semimetric for every homogeneous Lorentzian $G$
\end{cxsummary}

\begin{cxcertificate}{lorentzian-jensen}
The same cubic read as a homogeneous Lorentzian polynomial; the violation transfers verbatim.
\end{cxcertificate}

\begin{cxrefutation}
Both statements fall to a single witness: a strictly Lorentzian bivariate cubic, carried to convex bodies by Shephard's realization theorem.  Throughout, $\J_G(x,y)=\log G(\frac{x+y}2)-\frac12\log G(x)-\frac12\log G(y)$ is the midpoint Jensen gap of $F=-\log G$, and $d_G=\sqrt{\J_G}$.  The witness is isolated first, so that each of the two statements below is a reading of the same three numbers.

\begin{lemma}[The witness]\label{lem:lorentzian-cubic}
Let
\begin{equation}\label{eq:lorentzian-cubic}
G(x,y)=y^3+\tfrac{11}5xy^2+\tfrac32x^2y+\tfrac1{10}x^3,\qquad(x,y)\in\R_{>0}^2,
\end{equation}
and take the rational points
\begin{equation}\label{eq:lorentzian-points}
p=(35,2),\qquad q=\left(\tfrac{11}2,\tfrac{23}2\right),\qquad r=\left(\tfrac1{500},15\right).
\end{equation}
Then $G$ is strictly Lorentzian, and
\begin{equation}\label{eq:lorentzian-gap}
d_G(p,r)-d_G(p,q)-d_G(q,r)>\frac{7156704176613847159228106087505875849130467}{10^{45}}>0.
\end{equation}
\end{lemma}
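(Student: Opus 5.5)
The lemma has two parts: strict Lorentzianity of $G$, and the triangle-inequality violation \eqref{eq:lorentzian-gap}. I would treat them separately and reduce each to exact rational arithmetic.

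\emph{Lorentzianity.} I would use the bivariate criterion recalled in the context. Write $G=\sum_k a_k x^k y^{3-k}$, so $a_0=1$, $a_1=\tfrac{11}5$, $a_2=\tfrac32$, $a_3=\tfrac1{10}$. The normalized coefficients $b_k=a_k/\binom3k$ are then $1,\ \tfrac{11}{15},\ \tfrac12,\ \tfrac1{10}$. Strict log-concavity needs only two checks:
\[
b_1^2=\tfrac{121}{225}>\tfrac12=b_0b_2,\qquad b_2^2=\tfrac14>\tfrac{11}{150}=b_1b_3 .
\]
All coefficients are positive, so $G$ is strictly Lorentzian and positive on $\R_{>0}^2$. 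In particular $\J_G\ge0$, so each $d_G$ is a well-defined real number.

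\emph{The gap.} Each $\J_G(u,v)$ equals $\tfrac12\log\bigl(G(\tfrac{u+v}2)^2/(G(u)G(v))\bigr)$. So each of the three distances has the form $\sqrt{\tfrac12\log\rho}$, where $\rho$ is an explicit rational obtained by evaluating the cubic at rational points. First I compute the three ratios
\[
\rho_{pr}=\frac{G(\tfrac{p+r}2)^2}{G(p)G(r)},\qquad \rho_{pq}=\frac{G(\tfrac{p+q}2)^2}{G(p)G(q)},\qquad \rho_{qr}=\frac{G(\tfrac{q+r}2)^2}{G(q)G(r)}
\]
exactly in $\mathbb Q$.

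Next I bound each logarithm with rational quantities. I would lower-bound $\log\rho_{pr}$ and upper-bound $\log\rho_{pq}$ and $\log\rho_{qr}$, using truncated series for $\log$: for instance the atanh series $\log x=2\sum_{j\ge0}\frac{1}{2j+1}\bigl(\frac{x-1}{x+1}\bigr)^{2j+1}$ with an explicit geometric tail bound, or simply an outward-rounded interval evaluation. Square roots are handled the same way, by rational bracketing through Newton iterates with verified squares.

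Finally I combine the brackets: the lower bound for $d_G(p,r)$ minus the upper bounds for $d_G(p,q)$ and $d_G(q,r)$ should exceed the stated dyadic constant. This last part is the accompanying verifier's job and involves no analysis.

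\emph{Main obstacle.} The hard step is the precision bookkeeping. The point $r=(\tfrac1{500},15)$ sits near the boundary ray, so $\rho_{pr}$ is large while the other two ratios are moderate, and the margin, about $7\times10^{-3}$, is small relative to the distances. The truncation order and rational precision therefore have to be chosen so that the accumulated error is far below that margin.

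I would first evaluate everything in floating point to confirm that the sign and magnitude of the margin are robust. Then I would certify the result with the interval and series bounds above, keeping every bound rational so that the final inequality is an exact comparison of rationals.
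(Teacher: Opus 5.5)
Your proposal follows the paper's proof essentially verbatim. It uses the same normalized-coefficient check $(1,\tfrac{11}{15},\tfrac12,\tfrac1{10})$ for strict Lorentzianity, then exact evaluation of the three rational ratios $G(\tfrac{a+b}2)^2/(G(a)G(b))$, then outward-rounded rational enclosures of their logarithms and square roots.

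One calibration point: the displayed $43$-digit constant is evidently the lower endpoint of an enclosure carried to about $45$ decimal places, so your brackets must be that tight to certify the inequality as stated. Error merely far below the $7\times10^{-3}$ margin certifies only that the gap is positive.
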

\begin{proof}
Writing $G=\sum_{k=0}^3a_kx^ky^{3-k}$ gives $(a_0,a_1,a_2,a_3)=(1,\frac{11}5,\frac32,\frac1{10})$.  By the bivariate criterion \cite[Example~2.3]{BrandenHuh}, strict Lorentzianity is strict log-concavity of $a_k/\binom3k$, and
\[
\left(1,\tfrac{11}{15},\tfrac12,\tfrac1{10}\right)
\text{ satisfies }
\left(\tfrac{11}{15}\right)^2=\tfrac{121}{225}>\tfrac12,\qquad
\left(\tfrac12\right)^2=\tfrac14>\tfrac{11}{150}.
\]
So $G$ is strictly Lorentzian; in particular $F=-\log G$ is convex, and every Jensen gap below is nonnegative.  With $R_{ab}:=G((a+b)/2)^2/(G(a)G(b))$ and $\J_G(a,b)=\frac12\log R_{ab}$, exact evaluation at \eqref{eq:lorentzian-points} gives
\[
R_{pq}=\frac{609584616081}{344696430080},\quad
R_{qr}=\frac{1254036986210090216149539001}{1235460453386242764000000000},\quad
R_{pr}=\frac{6141969899088096806341860001}{2794813396007162280000000000}.
\]
Outward-rounded rational enclosures for the three logarithms and their square roots---no floating point---then certify \eqref{eq:lorentzian-gap}.
\end{proof}

\begin{theorem}[\statusfalse: log-volume distance on convex bodies]\label{thm:volume-distance}
There are full-dimensional convex bodies $K,L\subset\R^3$ and Minkowski combinations $A,B,C$ of them with
\[
d_{\Vol}(A,C)>d_{\Vol}(A,B)+d_{\Vol}(B,C).
\]
Hence the proposed log-volume midpoint gap is not the square of a semimetric.
\end{theorem}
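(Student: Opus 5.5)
The plan is to transport Lemma~\ref{lem:lorentzian-cubic} to convex bodies via Shephard's realization theorem. The cubic $G$ has the form $\Vol(xK+yL)$ for suitable bodies $K,L\subset\R^3$, and the three points $p,q,r$ become three Minkowski combinations $A=35K+2L$, $B=\tfrac{11}2K+\tfrac{23}2L$, $C=\tfrac1{500}K+15L$. Since $(A+B)/2=\tfrac{p_1+q_1}2K+\tfrac{p_2+q_2}2L$, we have $\Vol((A+B)/2)=G((p+q)/2)$, and similarly for the other two pairs. Therefore $d_{\Vol}(A,B)=d_G(p,q)$ for each pair, and \eqref{eq:lorentzian-gap} gives the triangle violation directly.

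The key step is realizability. By the mixed-volume expansion,
\[
\Vol(xK+yL)=\sum_{k=0}^3\tbinom3k V(K[k],L[3-k])\,x^ky^{3-k},
\]
so we need bodies whose normalized mixed volumes $V_k:=V(K[k],L[3-k])$ equal $a_k/\binom3k$, namely $(1,\tfrac{11}{15},\tfrac12,\tfrac1{10})$.

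Shephard's theorem (in the cited form) states the following. A sequence $V_0,\dots,V_d$ of positive numbers is the mixed-volume sequence of a pair of convex bodies in $\R^d$ if and only if it satisfies the Aleksandrov--Fenchel inequalities $V_k^2\ge V_{k-1}V_{k+1}$. For $d=3$, only these log-concavity inequalities are required, and Lemma~\ref{lem:lorentzian-cubic} establishes them strictly. Hence such $K,L$ exist, and since all $V_k>0$ we may take them full-dimensional.

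To make the construction explicit rather than an existence claim, I would try $K$ and $L$ as coaxial boxes or ellipsoid-type bodies: a box $[0,\alpha]\times[0,\beta]\times[0,\gamma]$ against a unit cube gives the elementary symmetric functions, and we would solve for $\alpha,\beta,\gamma$. This works only if the polynomial $\prod(\alpha_i x+y)$ has real roots, which a generic strictly log-concave sequence need not satisfy. So I would simply invoke Shephard for existence.

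The main obstacle is making sure the realization theorem applies with exactly the given sequence and not only up to scaling. Scaling is harmless: if the realized sequence is $cV_k$, then $G$ becomes $cG$, and $\J$ is invariant under multiplying $G$ by a constant, because the $\log c$ terms cancel in the midpoint gap. It is also harmless to rescale $x$ and $y$ separately, via $K\mapsto sK$ and $L\mapsto tL$, provided the points $p,q,r$ are rescaled correspondingly.

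The final step is to check that $A,B,C$ are convex bodies of positive volume. This holds because all their coefficients are positive and $K,L$ are full-dimensional. Then $d_{\Vol}(A,C)-d_{\Vol}(A,B)-d_{\Vol}(B,C)$ equals the left side of \eqref{eq:lorentzian-gap}, which is positive.
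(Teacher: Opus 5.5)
Your proposal is correct and is essentially the paper's proof. Both arguments realize the normalized coefficients $(1,\tfrac{11}{15},\tfrac12,\tfrac1{10})$ as mixed volumes via Shephard's theorem; these are the paper's $W_i$ in reversed order. Both then use Minkowski linearity to identify $d_{\Vol}(A,B)^2=\J_G(p,q)$ and its analogues, and transfer the gap \eqref{eq:lorentzian-gap}. Two small points: the paper's extra inequality $W_1W_2>W_0W_3$ follows from the two log-concavity inequalities you check, and full-dimensionality is forced by $\Vol(K)=\tfrac1{10}$, $\Vol(L)=1$, so it is not a choice.
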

\begin{proof}
For convex bodies in $\R^3$ the relative Steiner polynomial is $\Vol(xK+yL)=\sum_{i=0}^3\binom3iW_i(K,L)x^{3-i}y^i$.  Put
\[
(W_0,W_1,W_2,W_3)=\left(\tfrac1{10},\tfrac12,\tfrac{11}{15},1\right),
\]
which satisfies the full set of Shephard inequalities in dimension three:
\[
W_1^2=\tfrac14>\tfrac{11}{150}=W_0W_2,\qquad
W_1W_2=\tfrac{11}{30}>\tfrac1{10}=W_0W_3,\qquad
W_2^2=\tfrac{121}{225}>\tfrac12=W_1W_3.
\]
By Shephard's realization theorem \citeyearpar[Theorem~4]{Shephard} these are the mixed volumes of some pair $K,L\subset\R^3$, whose Steiner polynomial is then exactly the cubic \eqref{eq:lorentzian-cubic}; the positive endpoint coefficients force $\Vol(K),\Vol(L)>0$, so both bodies are full-dimensional.  Setting $A=p_1K+p_2L$, $B=q_1K+q_2L$, $C=r_1K+r_2L$ for the points \eqref{eq:lorentzian-points}, Minkowski linearity gives $(A+B)/2=\frac{p_1+q_1}2K+\frac{p_2+q_2}2L$ and likewise for the other pairs, so
\[
d_{\Vol}(A,B)^2=\J_G(p,q),\qquad
d_{\Vol}(B,C)^2=\J_G(q,r),\qquad
d_{\Vol}(A,C)^2=\J_G(p,r),
\]
and the gap \eqref{eq:lorentzian-gap} of Lemma~\ref{lem:lorentzian-cubic} transfers verbatim.
\end{proof}

\begin{corollary}[\statusfalse: Lorentzian Jensen--Bregman metricity]\label{thm:lorentzian}
The cubic \eqref{eq:lorentzian-cubic} is a homogeneous strictly Lorentzian polynomial for which $d_G=\sqrt{\J_G}$ violates the triangle inequality on $\R_{>0}^2$.  Hence $\sqrt{\J_G}$ is not a semimetric for every homogeneous Lorentzian $G$.
\end{corollary}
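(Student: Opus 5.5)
The corollary is a rereading of Lemma~\ref{lem:lorentzian-cubic} in the language of Problem~\ref{ctx:lorentzian-jensen}; no new computation is needed. The plan is to check that the cubic \eqref{eq:lorentzian-cubic}, on the open convex cone $\mathcal C=\R_{>0}^2$, meets every hypothesis of that problem. Then the three points \eqref{eq:lorentzian-points} exhibit the failure of the triangle inequality.

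\textbf{Hypotheses.} First, $G$ is homogeneous of degree three, and its coefficients $(1,\tfrac{11}5,\tfrac32,\tfrac1{10})$ are all positive, so $G$ maps $\R_{>0}^2$ into $(0,\infty)$. This makes $F=-\log G$ well defined on the cone. Strict Lorentzianity is exactly what the lemma established through the bivariate criterion of \citet[Example~2.3]{BrandenHuh}. Strictly Lorentzian polynomials are in particular Lorentzian, so $G$ lies in the class the statement quantifies over. Log-concavity of Lorentzian polynomials then makes $F$ convex, hence $\J_G\ge0$ and $d_G=\sqrt{\J_G}$ is real-valued.

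\textbf{Violation.} Next I would note that $p,q,r$ all have strictly positive coordinates, so they lie in $\mathcal C$. The quantities $d_G(p,q)$, $d_G(q,r)$, $d_G(p,r)$ are the same ones certified in \eqref{eq:lorentzian-gap}. That inequality gives $d_G(p,r)>d_G(p,q)+d_G(q,r)$, contradicting the triangle inequality. Therefore $d_G$ is not a semimetric, and the universal claim of Problem~\ref{ctx:lorentzian-jensen} fails.

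The only delicate point is one of definitions: the paper's notion of Lorentzian must be the Br\"and\'en--Huh one, and the bivariate log-concavity criterion must apply to it. Both were already settled in the proof of the lemma. All the numerical work is carried by the rational certificate there, so nothing here is a real obstacle.
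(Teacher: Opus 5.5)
Your proposal is correct and matches the paper's proof: the corollary is read off directly from Lemma~\ref{lem:lorentzian-cubic}, which supplies both strict Lorentzianity of the cubic and the certified violation of the triangle inequality at three points of $\R_{>0}^2$. Your additional checks ($G>0$ on the cone, strictly Lorentzian implies Lorentzian, $\J_G\ge0$) are harmless bookkeeping that the paper leaves implicit.
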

\begin{proof}
Immediate from Lemma~\ref{lem:lorentzian-cubic}: the cubic \eqref{eq:lorentzian-cubic} is homogeneous of degree three and strictly Lorentzian, while \eqref{eq:lorentzian-gap} is exactly the failure of the triangle inequality for $d_G$ at the three points \eqref{eq:lorentzian-points}.
\end{proof}

\begin{remark}[What the audit leaves standing]
The midpoint integral identity and the homogeneous primitive decomposition of the earlier positive argument are correct.  What fails is a claimed Gram-determinant identity for the derivative of a Hessian Rayleigh quotient, whose two sides have incompatible homogeneity in the line direction; the omitted calculation therefore conceals a false identity rather than a technical gap.  Appendix~\ref{sec:lorentzianaudit} carries the full audit, including the exact enclosure procedure behind \eqref{eq:lorentzian-gap}.
\end{remark}
\end{cxrefutation}

% ==== end counterexamples/lorentzian-jensen/case.tex ====
% ==== counterexamples/dpp-feasible-step/case.tex ====

\cxtitle{Feasible Picard steps for DPP likelihood}

\begin{cxcredits}
\posedby{Mariet and Sra \citeyearpar{MarietSra2015}}
\foundby{GPT-5.6}{Aug 2026}
\formalizedby{S.~Sra}
\auditedby{S.~Sra}
\contributedby{S.~Sra}
\end{cxcredits}

\begin{cxcontext}
A determinantal point process with positive definite kernel $L$ on a ground set $\mathcal Y$ assigns a subset $Y\subseteq\mathcal Y$ probability proportional to $\det(L_Y)$, the principal minor on $Y$.  Given observed subsets $Y_1,\ldots,Y_n$, learning the kernel $L$ means maximizing the log-likelihood
\[
\phi(L)=\sum_{i=1}^n\log\det(L_{Y_i})-n\log\det(I+L),
\]
whose stationarity $\nabla\phi(L)=0$ can be written as $\Delta+L^{-1}=L^{-1}$ for the residual
\[
\Delta=\frac1n\sum_{i=1}^nU_i(U_i^*LU_i)^{-1}U_i^*-(I+L)^{-1},
\]
where $U_i$ is the indicator matrix selecting the coordinates in $Y_i$.  The stationarity suggests the Picard iteration $L_{k+1}=L_k+aL_k\Delta_kL_k$ of the quoted passage; a step size $a$ is \emph{feasible} when it keeps the iterate positive definite, which the source guarantees for $a\le1/(1-\gamma)$ with $\gamma=\max\{\lambda_{\min}(LZ),1/\lambda_{\max}(I+L)\}\in[0,1]$.  Ascent is proved for $a=1$; the conjecture asks for every feasible $a\ge1$.
\end{cxcontext}

\begin{cxsource}{dpp-feasible-step}
Z.~Mariet and S.~Sra \citeyearpar{MarietSra2015}, \S2 (unnumbered)
\end{cxsource}

\begin{cxstatement}{dpp-feasible-step}
Above we showed that for $a=1$ ascent is guaranteed.  Empirically, $a>1$ often works well; Prop.~A.1 presents an easily computable upper bound on feasible $a$.  We conjecture that for all feasible values $a\ge1$, iteration (2.5) is guaranteed to increase the log-likelihood.

The iteration carrying the step size is $L_{k+1}=L_k+aL_k\Delta_kL_k$, displayed as (2.7) in the source; feasibility is the bound $a\le1/(1-\gamma)$ of Prop.~A.1.
\end{cxstatement}

\begin{cxsummary}{dpp-feasible-step}
Mariet--Sra feasible-step Picard ascent conjecture for DPP likelihood \citeyearpar{MarietSra2015}
\end{cxsummary}

\begin{cxcertificate}{dpp-feasible-step}
Exact rational $2\times2$ feasible update with lower likelihood.
\end{cxcertificate}

\begin{cxrefutation}
\begin{theorem}[\statusfalse: feasible-step ascent conjecture]\label{thm:dpp}
A feasible step with $a=5$ can strictly decrease the DPP log-likelihood.
\end{theorem}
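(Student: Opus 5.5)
The plan is to exhibit an explicit $2\times2$ rational instance and verify it by exact arithmetic.

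The data will be a ground set $\mathcal Y=\{1,2\}$, a handful of observed subsets (say $Y_1=\{1\}$, $Y_2=\{2\}$, and possibly $Y_3=\{1,2\}$), and a positive definite rational kernel $L$, diagonal or with a small off-diagonal entry. The computation then runs as follows.

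\begin{enumerate}
\item Compute the residual $\Delta=\frac1n\sum_iU_i(U_i^*LU_i)^{-1}U_i^*-(I+L)^{-1}$ exactly. For singletons, $U_i(U_i^*LU_i)^{-1}U_i^*$ is just $L_{jj}^{-1}E_{jj}$, so $\Delta$ is an explicit rational matrix.
\item Compute $\gamma=\max\{\lambda_{\min}(LZ),1/\lambda_{\max}(I+L)\}$, where $Z$ is the first term of $\Delta$. I would choose $L$ so that the relevant eigenvalues are rational; a diagonal $L$ makes this automatic.
\item Confirm that $a=5\le 1/(1-\gamma)$, which requires $\gamma\ge 4/5$. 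This means $L$ must have eigenvalues close to those making $LZ$ close to $I$, i.e.\ $L$ near a stationary point in some directions, or else $\lambda_{\max}(I+L)\le 5/4$.
\item Form $L_+=L+5L\Delta L$ and check that $L_+\succ0$ directly via Sylvester's criterion.
\item Show $\phi(L_+)<\phi(L)$. Since $\phi(L_+)-\phi(L)$ is a log of a ratio of products of determinants, I would reduce the comparison to a single rational inequality between $\prod_i\det((L_+)_{Y_i})\det(I+L)^n$ and $\prod_i\det(L_{Y_i})\det(I+L_+)^n$, which can be decided by integer arithmetic.
\end{enumerate}

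The reason a decrease is possible is that a large step overshoots. With $\gamma$ close to $1$, the feasibility bound allows $a$ up to $1/(1-\gamma)$, but the quadratic model from the $a=1$ ascent proof only guarantees improvement for small steps. The natural way to find the counterexample is a one-parameter scan. In a diagonal model, the likelihood is separable: each coordinate contributes $\log\ell_j\cdot(\text{count})-n\log(1+\ell_j)$, and the update is $\ell\mapsto \ell+a\ell^2\delta$. I would pick coordinates where the data term is small, so that $\ell$ sits below its optimum, and have $\gamma$ driven by $1/\lambda_{\max}(I+L)$ being large. Then $a=5$ pushes $\ell$ far past the maximizer of the concave scalar function, lowering its value.

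The main obstacle is making feasibility and descent compatible. Feasibility with $a=5$ needs $\gamma\ge4/5$, which in the diagonal case forces all $\ell_j\le1/4$. That in turn forces $Z$ large, which makes $\Delta$ large, and the overshoot has to be genuine rather than merely harmless. I would first try the diagonal/singleton setting with small $\ell_j$. If the scalar overshoot is insufficient there, I would introduce an off-diagonal entry, so that the step goes along a direction where $\log\det(I+L)$ grows faster than the data term. Once rational numbers are found, the remaining verification is routine exact arithmetic.
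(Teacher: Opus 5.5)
\textbf{There is a genuine gap.} Your proposal gives no witness. The regime you commit to cannot supply one under the feasibility notion you adopt in step~3.

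In the diagonal case write $\pi_j=c_j/n$ for the fraction of observed sets containing $j$. Then $LZ=\operatorname{diag}(\pi_j)$, so $\gamma=\max\{\min_j\pi_j,\,1/(1+\ell_{\max})\}$. The update is $\ell_j'=\ell_j\bigl(1+a(\pi_j-\tfrac{\ell_j}{1+\ell_j})\bigr)$, which for $\pi_j<1$ can be rewritten as
\[
\ell_j'-\ell_j^*=(\ell_j^*-\ell_j)\Bigl[\frac{a\,\ell_j(1-\pi_j)}{1+\ell_j}-1\Bigr],\qquad \ell_j^*=\frac{\pi_j}{1-\pi_j}.
\]
Both factors $\frac{\ell_j}{1+\ell_j}$ and $1-\pi_j$ lie in $[0,1]$, so their product is at most $\min\bigl\{\frac{\ell_{\max}}{1+\ell_{\max}},\,1-\min_i\pi_i\bigr\}=1-\gamma$. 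Hence the bracket is $\le0$ whenever $a\le1/(1-\gamma)$. Every coordinate therefore moves toward, never past, the maximizer of the unimodal function $\pi_j\log\ell-\log(1+\ell)$; if $\pi_j=1$ it simply increases along an increasing function. So $\phi$ cannot decrease.

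This covers both ways of reaching $\gamma\ge4/5$. Your claim that $\gamma\ge4/5$ forces $\ell_j\le1/4$ overlooks the branch $\min_j\pi_j\ge4/5$, but the computation above handles it too. The overshoot you describe is therefore impossible exactly where you set it up. The off-diagonal fallback is only a hope, with no instance given.

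You are also aiming higher than the paper does. The paper reads ``feasible'' as in its context paragraph: the new iterate is positive definite, with Prop.~A.1 only a sufficient condition. Its proof checks just $L_1\succ0$. Its $L_0$ does not satisfy the Prop.~A.1 bound. With data $\{1\},\{2\},\{1,2\}$ one has $L_0Z=\frac13\bigl(L_0\operatorname{diag}(L_0)^{-1}+I\bigr)$, with eigenvalues $(2\pm\rho)/3$ where $\rho=(L_0)_{12}/\sqrt{(L_0)_{11}(L_0)_{22}}\approx0.58$. Also $1/\lambda_{\max}(I+L_0)\approx0.12$. So $\gamma\approx0.47$ and $1/(1-\gamma)\approx1.9<5$. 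Whether any witness meets your stricter requirement is not settled by the paper.

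Under the paper's reading, drop step~3 and keep the Sylvester check of step~4. Your scalar overshoot idea then works, but from \emph{above} the optimum rather than below. Take data $\{1\},\{2\}$ and $L_0=2I_2$. Then $\Delta=-\tfrac1{12}I$ and $L_1=\tfrac13I\succ0$. The inequality $\phi(L_1)<\phi(L_0)$ reduces to $\frac{1/3}{(4/3)^2}<\frac{2}{3^2}$, i.e.\ $27<32$.
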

\begin{proof}
Use ground set $\{1,2\}$ and observations $\mathcal T=\bigl\{\{1\},\{2\},\{1,2\}\bigr\}$.
Let
\[
L_0=\begin{pmatrix}3.37200647&2.62325460\\2.62325460&6.07953037\end{pmatrix},
\]
with the displayed decimals interpreted as exact rationals of denominator $10^8$.  For this data,
\[
\Delta(L)=\frac13\left[
\begin{pmatrix}L_{11}^{-1}&0\\0&0\end{pmatrix}
+\begin{pmatrix}0&0\\0&L_{22}^{-1}\end{pmatrix}+L^{-1}\right]-(I+L)^{-1}.
\]
At $a=5$, exact rational arithmetic gives
\[
L_1=L_0+5L_0\Delta(L_0)L_0
\approx\begin{pmatrix}
3.1679125391632645&3.1681701017272776\\
3.1681701017272776&3.4400758121589554
\end{pmatrix}.
\]
Both matrices are positive definite; numerically, $\det L_0\approx13.6187510458$ and $\det L_1\approx0.8605575075$.
The normalized objective is $\phi(L)=\frac13\bigl(\log L_{11}+\log L_{22}+\log\det L\bigr)-\log\det(I+L).$
The inequality $\phi(L_1)<\phi(L_0)$ is equivalent, after exponentiating and cubing, to the exact rational comparison
\[
\frac{(L_1)_{11}(L_1)_{22}\det L_1}{(L_0)_{11}(L_0)_{22}\det L_0}
<\left(\frac{\det(I+L_1)}{\det(I+L_0)}\right)^3.
\]
Numerically, the left and right sides are respectively
\[
0.03359118941299074
\quad\text{and}\quad
0.04354945027745948.
\]
Their exact rational difference is positive.  Hence the step is feasible but descending.
\end{proof}
\end{cxrefutation}

% ==== end counterexamples/dpp-feasible-step/case.tex ====
% ==== counterexamples/sdd-nystrom-diminishing-returns/case.tex ====
\cxtitle{Strict diagonal dominance does not ensure diminishing Nystr\"om error reductions}

\begin{cxcredits}
\posedby{Mark Fornace \citeyearpar{AmselEtAl2026}}
\foundby{OpenAI Codex}{2026-08}
\formalizedby{OpenAI Codex}
\auditedby{OpenAI Codex}
\contributedby{Suvrit Sra}
\end{cxcredits}

\begin{cxcontext}
For a symmetric positive-definite matrix $L$ and $\gamma>0$, put
$K=(L+\gamma I)^{-1}$.  For a nonempty selected index set $\mathcal I$, the
nuclear Nystr\"om error is
\[
 F(\mathcal I)=\left\|K-K_{:,\mathcal I}
 K_{\mathcal I,\mathcal I}^{-1}K_{\mathcal I,:}\right\|_*.
\]
The residual is positive semidefinite, so its nuclear norm equals its trace.
The intended diminishing-error-reduction property is that, for
$S\subseteq T$ and $i\notin T$,
\[
 F(S)-F(S\cup\{i\})\geq F(T)-F(T\cup\{i\}).
\]
This formulation avoids the convention-dependent choice between the words
``submodular'' and ``supermodular'' for a decreasing error function.
The property is what would license selecting $\mathcal I$ greedily; without it,
landmark sets are chosen by other means, for instance by determinantal sampling
\citep{LiJegelkaSra2016} or through elementary-symmetric-polynomial design
objectives \citep{MarietSra2017}.
Colbrook had already resolved Problem~\ref{ctx:sdd-nystrom-diminishing-returns}
in July 2026, proving the SDDM case
and giving exact SDD obstructions \cite{Colbrook2026}.  The matrix below was
found independently in August 2026; no priority over Colbrook is claimed.
\end{cxcontext}

\begin{cxsource}{sdd-nystrom-diminishing-returns}
Problem~4.6 of Amsel et al. \cite{AmselEtAl2026}, posed there by Mark Fornace.
Colbrook \cite{Colbrook2026} subsequently completed the answer before this
independent witness was found.
\end{cxsource}

\begin{cxstatement}{sdd-nystrom-diminishing-returns}
Prove or disprove the submodularity of the nuclear Nystr\"om error~(14) when
$L$ is assumed, in contrast to the above, to be (a) SDDM and positive-definite
or (b) SDD and positive-definite.
\end{cxstatement}

\begin{cxsummary}{sdd-nystrom-diminishing-returns}
A strictly diagonally dominant positive-definite $4\times4$ signed matrix has
a nonempty-base marginal Nystr\"om error reduction that increases after an
additional index is selected.
\end{cxsummary}

\begin{cxcertificate}{sdd-nystrom-diminishing-returns}
An exact integer $4\times4$ matrix, rational shift, four rational principal
inverse traces, and the positive violation gap $2165/14833896$.
\end{cxcertificate}

\begin{cxrefutation}
Problem~\ref{ctx:sdd-nystrom-diminishing-returns} asks whether the nuclear
Nystr\"om error retains diminishing reductions for positive-definite SDDM or
SDD matrices \cite{AmselEtAl2026}.
Part~(a) is true, while part~(b) is false \cite{Colbrook2026}.  Here is an
independently found exact witness for part~(b) with a nonempty base set.

\begin{theorem}[\statusfalse: SDD Nystr\"om diminishing returns]
\label{thm:sdd-nystrom-diminishing-returns}
The nuclear Nystr\"om error need not have diminishing error reductions when
$L$ is symmetric diagonally dominant and positive definite, even when $L$ is
strictly diagonally dominant and the comparison uses a nonempty base set.
\end{theorem}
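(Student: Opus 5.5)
Since the statement is existential, the plan is to exhibit one explicit witness and verify it with exact rational arithmetic, as the certificate line promises: an integer $4\times4$ matrix $L$, a rational shift $\gamma$, and a positive violation gap.

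The first step is a reduction to principal inverse traces. Write $K=(L+\gamma I)^{-1}$. The Nystr\"om residual $K-K_{:,\mathcal I}K_{\mathcal I,\mathcal I}^{-1}K_{\mathcal I,:}$ is the Schur complement of $K_{\mathcal I,\mathcal I}$ in $K$, padded with zeros on $\mathcal I$. Its complementary block equals $\bigl((K^{-1})_{\bar{\mathcal I},\bar{\mathcal I}}\bigr)^{-1}$, and $K^{-1}=L+\gamma I$. Since the residual is PSD, its nuclear norm is its trace, so
\[
F(\mathcal I)=\operatorname{tr}\Bigl(\bigl((L+\gamma I)_{\bar{\mathcal I},\bar{\mathcal I}}\bigr)^{-1}\Bigr).
\]
Thus $F$ is the trace of the inverse of a principal submatrix of $M:=L+\gamma I$ on the complement. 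The diminishing-returns inequality for $S\subseteq T$, $i\notin T$ then involves only four rational numbers: the traces of $M_{U,U}^{-1}$ for $U=\bar S,\ \overline{S\cup\{i\}},\ \bar T,\ \overline{T\cup\{i\}}$. These match the "four rational principal inverse traces" of the certificate.

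Next comes the choice of witness. In $n=4$, take $S=\{1\}$ (nonempty) and $T=\{1,2\}$, and pick $i\in\{3,4\}$. The complements then have sizes $3,2,2,1$, so each trace is a rational function computable by hand from cofactors: $\operatorname{tr}(M_U^{-1})=\sum_{j\in U}\det M_{U\setminus j}/\det M_U$.

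I would search over small integer strictly diagonally dominant symmetric $L$ with mixed-sign off-diagonals. Sign mixing is essential: for SDDM (nonpositive off-diagonals), part (a) holds by Colbrook. Heuristically, off-diagonal entries of opposite signs in a triangle produce a cancellation that makes adding index $2$ \emph{increase} the marginal value of index $i$.

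The last step is verification. For the chosen $L$ and $\gamma$, check strict diagonal dominance, $|L_{jj}|>\sum_{k\ne j}|L_{jk}|$ with positive diagonal, which implies positive definiteness (alternatively, Sylvester's criterion). Then compute the four traces exactly and show that
\[
F(T)-F(T\cup\{i\})-F(S)+F(S\cup\{i\})
\]
is a positive rational. The main obstacle is locating the witness. Nothing above forces a particular matrix, so I would do a small exhaustive or random integer search, guided by the sign-pattern heuristic and using small $\gamma$ so that $L$'s structure dominates. After that, the verification is routine rational arithmetic.
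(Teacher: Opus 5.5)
Your reduction is the paper's. The Nystr\"om residual vanishes on the rows and columns indexed by $\mathcal I$ and equals $\bigl((L+\gamma I)_{J,J}\bigr)^{-1}$ on the complement $J$, so $F(\mathcal I)=\operatorname{Tr}\bigl((M_{J,J})^{-1}\bigr)$ with $M=L+\gamma I$. Your configuration (a singleton base $S$, $T$ one index larger, a fourth index $i$) is also, after relabeling, the one the paper uses.

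\textbf{The gap.} The proof stops before the only step with content. The theorem is an existence claim. Once $F$ is reduced to principal inverse traces, what remains is exactly to show that some admissible $(L,\gamma,S,T,i)$ makes $F(T)-F(T\cup\{i\})-F(S)+F(S\cup\{i\})$ positive. Describing a search and a sign heuristic does not establish this, because nothing in your argument shows the search space contains a witness. Until one is displayed and checked, the statement is unproved. Citing Colbrook's obstructions would give existence, but then the result is quoted, not proved.

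\textbf{How the paper fills it.} The paper takes
\[
M=\begin{pmatrix}11&3&-4&3\\3&12&4&-1\\-4&4&14&-2\\3&-1&-2&11\end{pmatrix},\qquad \gamma=\tfrac12,\qquad L=M-\tfrac12 I.
\]
The row margins of $L$ are $\tfrac12,\tfrac72,\tfrac72,\tfrac92$, so $L$ is strictly diagonally dominant. With $S=\{2\}$, $T=\{2,4\}$, $i=3$ one gets $F(\{2\})=100/349$, $F(\{2,3\})=11/56$, $F(\{2,4\})=25/138$ and $F(\{2,3,4\})=1/11$. The combination is therefore $137/1518-1761/19544=2165/14833896>0$. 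Once this matrix is inserted, your cofactor formula and Gershgorin check finish the proof exactly as in the paper.

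\textbf{Two refinements of your search heuristic.} First, $F$ depends only on $M$, so $\gamma$ enters only through the requirement that $M-\gamma I$ stay strictly diagonally dominant. You can therefore search over strictly diagonally dominant integer $M$ and take $\gamma$ below its smallest row margin. Second, a diagonal $\pm1$ similarity preserves both $F$ and diagonal dominance. Part (a) thus excludes every $L$ signature-similar to an SDDM matrix. The operative condition is then not mixed signs as such but a cycle of length $k$ whose off-diagonal product has sign $-(-1)^k$; for a triangle, this means a positive product. The paper's witness has one: $M_{13}M_{14}M_{34}=(-4)\cdot 3\cdot(-2)>0$.
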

\begin{proof}
Let
\[
 M=\begin{pmatrix}
 11&3&-4&3\\
 3&12&4&-1\\
 -4&4&14&-2\\
 3&-1&-2&11
 \end{pmatrix},\qquad
 \gamma=\frac12,\qquad L=M-\frac12I,
\]
and set $K=(L+\gamma I)^{-1}=M^{-1}$.  The diagonal entries of $L$ minus the
sums of the absolute values of the corresponding off-diagonal entries are
\[
 \frac12,\quad\frac72,\quad\frac72,\quad\frac92.
\]
Thus $L$ is symmetric strictly diagonally dominant with positive diagonal;
Gershgorin's theorem gives $L\succ0$.

For a selected set $\mathcal I$ with complement $J$, the block inverse
identity gives
\[
 K_{J,J}-K_{J,\mathcal I}K_{\mathcal I,\mathcal I}^{-1}
 K_{\mathcal I,J}=(M_{J,J})^{-1}.
\]
All other blocks of the Nystr\"om residual vanish.  Consequently
\[
 F(\mathcal I)=\operatorname{Tr}\bigl((M_{J,J})^{-1}\bigr).
\]

Use one-based indices, take the nonempty base $S=\{2\}$, and compare adding
$i=3$ before and after $j=4$.  Direct exact inversion of the four relevant
principal submatrices gives
\[
 F(\{2\})=\frac{100}{349},\qquad
 F(\{2,3\})=\frac{11}{56},\qquad
 F(\{2,4\})=\frac{25}{138},\qquad
 F(\{2,3,4\})=\frac1{11}.
\]
For example, the complement of $\{2\}$ is indexed by $\{1,3,4\}$; its
$3\times3$ principal submatrix has determinant $1396$ and the sum of its
three diagonal cofactors is $400$, yielding $400/1396=100/349$.

The two marginal error reductions are therefore
\[
 F(\{2\})-F(\{2,3\})=\frac{1761}{19544},\qquad
 F(\{2,4\})-F(\{2,3,4\})=\frac{137}{1518}.
\]
Their difference in the forbidden direction is
\[
 \frac{137}{1518}-\frac{1761}{19544}
 =\frac{2165}{14833896}>0.
\]
Hence selecting index $3$ reduces the error more, not less, after index $4$
has already been selected.  This exactly violates diminishing error
reductions.  Every displayed quantity is an integer or an exact rational.
\end{proof}
\end{cxrefutation}
% ==== end counterexamples/sdd-nystrom-diminishing-returns/case.tex ====

\subsection{Statements recorded without a dossier}

These fell to the same standard of evidence as every case above --- each carries
a certificate in the repository that recomputes in exact arithmetic --- but the
statements are slight enough that the archive records them here rather than
spending a section and a ledger row on each.

\medskip
\begin{itemize}[itemsep=0.4em]
% ==== tex/generated/appendix-brief.tex ====
\cxbriefitem{Problem 5.1}{A one-dimensional oblivious subspace injection with injectivity $1$ can make sketch-and-solve incur the fixed factor $\sqrt2$ with probability $1/50$.}{Three exact rationally weighted $2\times2$ sketch atoms, with exact isotropy, an exact $99/100$ injection guarantee, and exact squared residual ratios.}{counterexamples/osi-sketch-and-solve/}
\cxbriefitem{the identity after eq. (20) in §6.2}{The proposed NEPv matrix does not reproduce the product-state Rayleigh quotient when the Hamiltonian sum includes a same-site term with $i=j$.}{The certificate consists of exact $2\times2$ integer matrices and the integer vector $(1,2)^T$, for which the two assertedly equal values are $0$ and $-4/25$.}{counterexamples/hamiltonian-nepv-identity/}
\cxbriefitem{Problem 4.3}{For an exact rational rank-three projector on eight coordinates, QRCP has strict pivots $I=(1,2,3)$ but the resulting inverse norm exceeds $\sqrt{18}$.}{An exact rational $8\times3$ Grassmann-chart matrix, three positive rational pivot gaps, and the Rayleigh gap $192051/250000$.}{counterexamples/qrcp-orthonormal-greedy/}
% ==== end tex/generated/appendix-brief.tex ====
\end{itemize}

% ==== end tex/09-additional.tex ====

% ==== tex/03-proof-details.tex ====
\section{Positive results for Sah et al's mixed-norm CS inequality}
\label{sec:proofdetails}

This appendix collects the positive results that bound where the mixed-norm
counterexamples of Theorems~\ref{thm:mixed-general} and~\ref{thm:mixed} can live.
Throughout we use the convention of \citet{SahSawhneyStonerZhao2020},
\[
\|X\|_{{p,q}}:=\Bigl(\nlsum_i\Bigl(\nlsum_j|x_{ij}|^p\Bigr)^{q/p}\Bigr)^{1/q},
\]
which is the transpose of the convention used in the reported counterexamples; on the
symmetric matrices $A^{\mathsf T}A$, $B^{\mathsf T}B$ and at $p=q$ the two agree,
which covers every statement below.  Write $p'$ for the conjugate exponent,
$1/p+1/p'=1$.  The question is whether
\begin{equation}\label{eq:ssszq}
\|A^{\mathsf T}B\|_{{p,q}}^2\ \le\ \|A^{\mathsf T}A\|_{{q,q}}\,\|B^{\mathsf T}B\|_{{p,p}}
\end{equation}
holds for all real $1\le p\le q$ and all entrywise nonnegative $A,B$.

\begin{lemma}\label{lem:dualitybound}
Let $M$ be Hermitian and $q\ge1$.  Then, $\sup_{\|w\|_{q'}\le1}w^*Mw\le\|M\|_{{q,q}}$.
\end{lemma}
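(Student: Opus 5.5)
The plan is to bound the quadratic form by Hölder's inequality applied entrywise, and then apply Hölder a second time in the outer sum. Write $w^*Mw=\sum_{i,j}\overline{w_i}\,m_{ij}\,w_j$. Taking absolute values gives
\[
|w^*Mw|\le\sum_{i,j}|w_i|\,|m_{ij}|\,|w_j|=\sum_{i,j}|m_{ij}|\,\bigl(|w_i||w_j|\bigr).
\]
We view the right-hand side as a pairing between the array $(|m_{ij}|)$ and the rank-one array $(|w_i||w_j|)$ on the index set $[n]\times[n]$. Hölder's inequality with exponents $q$ and $q'$ then yields
\[
\sum_{i,j}|m_{ij}|\,|w_i||w_j|\le\Bigl(\nlsum_{i,j}|m_{ij}|^q\Bigr)^{1/q}\Bigl(\nlsum_{i,j}|w_i|^{q'}|w_j|^{q'}\Bigr)^{1/q'}.
\]
The first factor is exactly $\|M\|_{q,q}$, since at $p=q$ the mixed norm is just the entrywise $\ell_q$ norm, and this holds under either transpose convention.

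The second factor factorizes:
\[
\Bigl(\nlsum_{i,j}|w_i|^{q'}|w_j|^{q'}\Bigr)^{1/q'}=\Bigl(\nlsum_i|w_i|^{q'}\Bigr)^{2/q'}=\|w\|_{q'}^2\le1 .
\]
Because $M$ is Hermitian, $w^*Mw$ is real, so $w^*Mw\le|w^*Mw|\le\|M\|_{q,q}$. Taking the supremum over $\|w\|_{q'}\le1$ gives the claim.

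Two edge cases remain. At $q=1$ we have $q'=\infty$, and Hölder takes the form $\sum|m_{ij}||w_i||w_j|\le\max_{i,j}|w_i||w_j|\sum|m_{ij}|\le\|w\|_\infty^2\|M\|_{1,1}$, so the same argument goes through. The case $q=\infty$ is not needed, since the statement only assumes $q\ge1$ finite; if one wanted it, it would follow by the symmetric computation.

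There is essentially no obstacle here. The only point needing care is to apply Hölder on the product index set rather than row by row. The row-by-row route would produce an operator norm $\ell_{q'}\to\ell_q$, which is bounded by $\|M\|_{q,q}$ anyway, but the direct product-space Hölder argument is cleaner and avoids that detour.
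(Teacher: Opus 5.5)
Your proof is correct and is essentially the paper's argument. The paper writes $w^*Mw=\operatorname{tr}(Mww^*)$, uses $\|ww^*\|_{q',q'}=\|w\|_{q'}^2$, and invokes the duality of the entrywise $\ell_q$ and $\ell_{q'}$ norms; that duality is exactly your H\"older step on the product index set applied to the rank-one array $(|w_i||w_j|)$, which you simply write out explicitly, together with the endpoint cases. One small correction: the statement does not restrict to finite $q$, and the paper later applies the lemma at $q=\infty$ with $w=e_i$. Your remark that this case follows by the symmetric computation already covers it.
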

\begin{proof}
Since $\|w\|_{q'}\le1$ iff $\|w\|_{q'}^2\le1$, and $\|w\|_{q'}^2=\|ww^*\|_{{q',q'}}$,
\[
\sup_{\|w\|_{q'}\le1}w^*Mw=\sup_{\|ww^*\|_{{q',q'}}\le1}\tr(Mww^*)
\le\sup_{\|W\|_{{q',q'}}\le1}\tr(MW)=\|M\|_{{q,q}},
\]
the last step being duality of $\|\cdot\|_{{q,q}}$ and $\|\cdot\|_{{q',q'}}$.
\end{proof}

\begin{proposition}\label{prop:pone}
Let $A,B$ be entrywise nonnegative and $q\ge1$.  Then \eqref{eq:ssszq} holds at $p=1$.
\end{proposition}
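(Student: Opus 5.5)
The plan is to exploit the fact that at $p=1$ the inner norm is just a row sum, so the left-hand side collapses to an ordinary $\ell_q$ norm of a single vector. Everything then follows from duality, Cauchy--Schwarz, and Lemma~\ref{lem:dualitybound}.

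\emph{Setup.} Write $A=[a_1,\ldots,a_r]$ and $B=[b_1,\ldots,b_s]$ by columns. Then $(A^{\mathsf T}B)_{ij}=a_i^{\mathsf T}b_j\ge0$ by entrywise nonnegativity, so no absolute values are needed.

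\emph{Step 1: collapse the left side.} In the convention of this appendix, the $i$th row $\ell_1$ norm of $A^{\mathsf T}B$ is
\[
\nlsum_j a_i^{\mathsf T}b_j=a_i^{\mathsf T}\beta,\qquad \beta:=B\mathbf 1 .
\]
Hence
\[
\|A^{\mathsf T}B\|_{1,q}=\|A^{\mathsf T}\beta\|_q .
\]
Likewise, since $B^{\mathsf T}B$ is entrywise nonnegative,
\[
\|B^{\mathsf T}B\|_{1,1}=\mathbf 1^{\mathsf T}B^{\mathsf T}B\mathbf 1=\|\beta\|_2^2 .
\]
So the claim reduces to proving
\[
\|A^{\mathsf T}\beta\|_q^2\le\|A^{\mathsf T}A\|_{q,q}\,\|\beta\|_2^2 .
\]

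\emph{Step 2: dualize and apply Cauchy--Schwarz.} By $\ell_q$--$\ell_{q'}$ duality,
\[
\|A^{\mathsf T}\beta\|_q=\sup_{\|w\|_{q'}\le1}w^{\mathsf T}A^{\mathsf T}\beta=\sup_{\|w\|_{q'}\le1}(Aw)^{\mathsf T}\beta .
\]
Cauchy--Schwarz gives $(Aw)^{\mathsf T}\beta\le\|Aw\|_2\|\beta\|_2$, and
\[
\|Aw\|_2^2=w^{\mathsf T}(A^{\mathsf T}A)w .
\]
Lemma~\ref{lem:dualitybound}, applied to the symmetric matrix $M=A^{\mathsf T}A$, bounds this by $\|A^{\mathsf T}A\|_{q,q}$ uniformly over $\|w\|_{q'}\le1$. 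Squaring then yields the reduced inequality, and hence \eqref{eq:ssszq} at $p=1$.

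\emph{Where care is needed.} No step is genuinely hard. The only points to check are bookkeeping:
\begin{itemize}
\item the row/column convention, which makes the inner $\ell_1$ norm a row sum of $A^{\mathsf T}B$ so that it factors through $\beta=B\mathbf 1$;
\item that nonnegativity is exactly what removes the absolute values, both in the row sums and in identifying $\|B^{\mathsf T}B\|_{1,1}$ with $\mathbf 1^{\mathsf T}B^{\mathsf T}B\mathbf 1$.
\end{itemize}
For $q=1$ one has $q'=\infty$, and the duality step is still valid.
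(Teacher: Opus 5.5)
Your proof is correct and is essentially the paper's argument. Both use $\ell_q$--$\ell_{q'}$ duality, then Cauchy--Schwarz against the vector obtained by multiplying by $\mathbf 1$ (which produces the $\|\cdot\|_{1,1}$ factor), then Lemma~\ref{lem:dualitybound} for the remaining quadratic form. The only difference is cosmetic: the paper's written proof sums columns and pairs $A\mathbf 1$ with $Bu$, i.e.\ it works in the transposed (main-body) convention with the roles of $A$ and $B$ interchanged, whereas your row-sum reduction to $\|A^{\mathsf T}B\mathbf 1\|_q$ matches \eqref{eq:ssszq} exactly in the appendix's stated convention.
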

\begin{proof}
As $A^{\mathsf T}B\ge0$ entrywise, $\|A^{\mathsf T}Be_i\|_1=\mathbf 1^{\mathsf T}A^{\mathsf T}Be_i$, so by duality
\[
\|A^{\mathsf T}B\|_{{1,q}}=\sup_{\|u\|_{q'}\le1}\sum_i u_i\mathbf 1^{\mathsf T}A^{\mathsf T}Be_i
=\sup_{\|u\|_{q'}\le1}\langle A\mathbf 1,\ Bu\rangle
\le\sup_{\|u\|_{q'}\le1}\|A\mathbf 1\|_2\|Bu\|_2 ,
\]
by Cauchy--Schwarz.  Now $\|A\mathbf 1\|_2=(\mathbf 1^{\mathsf T}A^{\mathsf T}A\mathbf 1)^{1/2}=\|A^{\mathsf T}A\|_{L_{1,1}}^{1/2}$, while
$$\sup_{\|u\|_{q'}\le1}\|Bu\|_2=\sup_{\|u\|_{q'}\le1}(u^{\mathsf T}B^{\mathsf T}Bu)^{1/2}\le\|B^{\mathsf T}B\|_{{q,q}}^{1/2}$$
by Lemma~\ref{lem:dualitybound}.
\end{proof}

This result appears as Lemma~3.1 in~\citet{SahSawhneyStonerZhao2020}; the duality proof recorded
there was supplied by S.~Sra, and the paper's acknowledgment records as much.

\begin{proposition}[every positive integer $p$]\label{prop:intp}
If $p\in\mathbb N$ and $q\ge p$, then \eqref{eq:ssszq} holds for all entrywise nonnegative $A,B$.
\end{proposition}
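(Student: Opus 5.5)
The plan is to reduce to the case $p=1$ (Proposition~\ref{prop:pone}) by a tensor-power argument. The point is that $p$-th powers of entries of nonnegative matrix products are again entries of products of tensor powers.

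\textbf{Tensor identity.} For entrywise nonnegative $A\in\R^{m\times k}$ and $B\in\R^{m\times n}$, form the row-wise Khatri--Rao (face-splitting) powers $\tilde A:=A^{\bullet p}$ and $\tilde B:=B^{\bullet p}$. These are the $m\times k^p$ and $m\times n^p$ matrices whose row $r$ is $a_r^{\otimes p}$ and $b_r^{\otimes p}$, respectively. Both are again entrywise nonnegative, and
\[
(\tilde A^{\mathsf T}\tilde B)_{\mathbf i,\mathbf j}=\sum_r\prod_{s=1}^p a_{r i_s}b_{r j_s}.
\]
Summing over the index tuples $\mathbf i,\mathbf j$ with prescribed multiplicities recovers sums of $p$-th powers, but only after symmetrization. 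The cleaner route is to use the multinomial identity in a different form. Define $\Phi(X):=X^{\odot}$-powers via the identity $\sum_{\mathbf i}\prod_s x_{i_s j_s}$. One checks directly that
\[
\sum_{i}(A^{\mathsf T}B)_{ij}^{\,p}=\mathbf 1^{\mathsf T}\bigl(\tilde A^{\mathsf T}\tilde B\bigr)_{:,(j,\dots,j)} .
\]
This is the column-$(j,\dots,j)$ $\ell_1$ sum, because $\bigl(\sum_r a_{ri}b_{rj}\bigr)^p$ expands as a sum over $r_1,\dots,r_p$. To make that expansion match, I would instead use the full Kronecker powers $A^{\otimes p}$ and $B^{\otimes p}$, where $(A^{\otimes p})^{\mathsf T}B^{\otimes p}=(A^{\mathsf T}B)^{\otimes p}$. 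Then
\[
\|(A^{\mathsf T}B)^{\otimes p}e_{(j,\dots,j)}\|_1=\|A^{\mathsf T}Be_j\|_1^p .
\]
This is not $\ell_p$ yet, so I would restrict to the diagonal index set: let $P$ be the coordinate restriction to tuples $(i,\dots,i)$, and put $\hat A:=A^{\otimes p}$ with rows restricted to diagonal tuples $(r,\dots,r)$. Then $\hat A$ is exactly the face-splitting power, and $\hat A^{\mathsf T}\hat B$ restricted to diagonal column and row tuples has entries $\sum_r a_{ri}^p b_{rj}^p$. This is the wrong way round, so the correct choice is to restrict the \emph{inner} dimension to be the full tensor and the outer indices to the diagonal: take $\check A:=A^{\otimes p}$ with only the diagonal columns $(i,\dots,i)$ kept, and similarly $\check B$. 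Then
\[
(\check A^{\mathsf T}\check B)_{ij}=\sum_{r_1,\dots,r_p}\prod_s a_{r_s i}b_{r_s j}=(A^{\mathsf T}B)_{ij}^{\,p},
\]
that is, $\check A^{\mathsf T}\check B=(A^{\mathsf T}B)^{\odot p}$, and likewise $\check A^{\mathsf T}\check A=(A^{\mathsf T}A)^{\odot p}$ and $\check B^{\mathsf T}\check B=(B^{\mathsf T}B)^{\odot p}$. The matrices $\check A$ and $\check B$ are nonnegative, so this is the key step.

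\textbf{Norm bookkeeping.} Apply Proposition~\ref{prop:pone} to $(\check A,\check B)$ with exponent $q/p\ge1$:
\[
\|(A^{\mathsf T}B)^{\odot p}\|_{1,q/p}^2\le\|(A^{\mathsf T}A)^{\odot p}\|_{q/p,q/p}\,\|(B^{\mathsf T}B)^{\odot p}\|_{1,1}.
\]
For any $X\ge0$ one has $\|X^{\odot p}\|_{1,q/p}=\|X\|_{p,q}^p$, $\|X^{\odot p}\|_{q/p,q/p}=\|X\|_{q,q}^p$ and $\|X^{\odot p}\|_{1,1}=\|X\|_{p,p}^p$. Taking $p$-th roots then gives \eqref{eq:ssszq}.

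\textbf{Main obstacle.} The step I expect to need the most care is the orientation convention. Proposition~\ref{prop:pone} pairs $\|\cdot\|_{q,q}$ with $B^{\mathsf T}B$ and the $\ell_1$ sum with $A$, whereas \eqref{eq:ssszq} pairs the exponent $q$ with $A^{\mathsf T}A$. I would handle this by swapping the roles of $A$ and $B$ (using $(A^{\mathsf T}B)^{\mathsf T}=B^{\mathsf T}A$), and by checking which index the inner $\ell_p$ sum runs over in the appendix convention. If needed, I would transpose, using that the symmetric factors are unaffected. Everything else is exact identities.
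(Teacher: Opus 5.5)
Your proof is correct and is essentially the paper's argument. Your final $\check A$, with each column $a_i$ replaced by $a_i^{\otimes p}$, is an explicit Gram factor of the $p$-th Hadamard power of $[A\ B]^{\mathsf T}[A\ B]$, which the paper obtains abstractly from the Schur product theorem; both proofs then apply Proposition~\ref{prop:pone} at exponent $q/p$ and take $p$-th roots. Your version has the small advantage that $\check A,\check B$ are themselves entrywise nonnegative, so Proposition~\ref{prop:pone} applies literally. Your bookkeeping also already uses it in the orientation of \eqref{eq:ssszq}, so the swap you worry about is not needed.

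You should delete the abandoned face-splitting detour, because the identity $\sum_i(A^{\mathsf T}B)_{ij}^{p}=\mathbf 1^{\mathsf T}(\tilde A^{\mathsf T}\tilde B)_{:,(j,\dots,j)}$ claimed there is false.
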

\begin{proof}
The block matrix $\begin{bsmallmatrix}A^{\mathsf T}A&A^{\mathsf T}B\\B^{\mathsf T}A&B^{\mathsf T}B\end{bsmallmatrix}=[A\ B]^{\mathsf T}[A\ B]$ is positive semidefinite, so by the Schur product theorem its $p$-th Hadamard power is too, and is therefore $\begin{bsmallmatrix}C^{\mathsf T}C&C^{\mathsf T}D\\D^{\mathsf T}C&D^{\mathsf T}D\end{bsmallmatrix}$ for some $C,D$ with $C^{\mathsf T}D$, $C^{\mathsf T}C$, $D^{\mathsf T}D$ entrywise nonnegative.  Writing $r=q/p\ge1$,
\[
\|A^{\mathsf T}B\|_{{p,q}}=\Bigl(\sum_i\Bigl(\sum_j (A^{\mathsf T}B)_{ij}^{p}\Bigr)^{q/p}\Bigr)^{1/q}
=\|C^{\mathsf T}D\|_{{1,r}}^{1/p}
\le\bigl(\|C^{\mathsf T}C\|_{{1,1}}^{1/2}\|D^{\mathsf T}D\|_{{r,r}}^{1/2}\bigr)^{1/p},
\]
by Proposition~\ref{prop:pone}, and the right-hand side is $\|A^{\mathsf T}A\|_{{p,p}}^{1/2}\|B^{\mathsf T}B\|_{{q,q}}^{1/2}$.
\end{proof}

\begin{proposition}[$q=\infty$]\label{prop:qinf}
For $1\le p$ and any complex $A,B$ of compatible size,
$\|A^*B\|_{{p,\infty}}^2\le\|A^*A\|_{{p,p}}\|B^*B\|_{{\infty,\infty}}$.
Here entrywise nonnegativity is not needed.
\end{proposition}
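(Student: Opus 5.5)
The plan is a two-step Cauchy--Schwarz argument: a duality estimate handles the inner $\ell_p$ sum, and a trivial diagonal bound handles the outer supremum. Write $a_k$ for the columns of $A$ and $b_j$ for the columns of $B$, so that $(A^*B)_{kj}=a_k^*b_j$, $(A^*A)_{kl}=a_k^*a_l$, and $(B^*B)_{jl}=b_j^*b_l$. At $q=\infty$ the left side is a maximum, over the outer index, of an $\ell_p$ norm over the inner index. I will arrange the argument so that the inner sum runs over the index attached to the factor measured in $\|\cdot\|_{p,p}$, here $A$.

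\textbf{Step 1 (the inner sum).} Fix the index $j$ attached to $B$ and consider the vector $v=(a_k^*b_j)_k$. By $\ell_p$--$\ell_{p'}$ duality,
\[
\|v\|_p=\sup_{\|w\|_{p'}\le1}\Bigl|\sum_k \overline{w_k}\,a_k^*b_j\Bigr|=\sup_{\|w\|_{p'}\le1}|(Aw)^*b_j|.
\]
Cauchy--Schwarz bounds this by $\|b_j\|_2\,\sup_{\|w\|_{p'}\le1}\|Aw\|_2$. Next, $\|Aw\|_2^2=w^*(A^*A)w$, and Lemma~\ref{lem:dualitybound}, applied with $M=A^*A$ Hermitian and $q=p$, gives $\|Aw\|_2^2\le\|A^*A\|_{p,p}$. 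Hence
\[
\|v\|_p^2\le \|b_j\|_2^2\,\|A^*A\|_{p,p}.
\]

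\textbf{Step 2 (the outer maximum).} Note that $\|b_j\|_2^2=(B^*B)_{jj}\le\max_{j,l}|(B^*B)_{jl}|=\|B^*B\|_{\infty,\infty}$. Taking the maximum over $j$ then yields $\|A^*B\|_{p,\infty}^2\le\|A^*A\|_{p,p}\|B^*B\|_{\infty,\infty}$. No step uses entrywise nonnegativity: only Cauchy--Schwarz and the duality lemma enter, and both are valid for complex matrices. This explains why the proposition drops that hypothesis.

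\textbf{Expected obstacle.} The only delicate point is bookkeeping of the convention. With the row/column convention of this appendix, one must check that the inner $\ell_p$ sum really runs over the $A$-index. If it runs over the $B$-index instead, the same argument produces the inequality with the roles of $A$ and $B$ exchanged, and the stated form then follows by applying it to the conjugate transpose $(A^*B)^*=B^*A$. I would settle this first by checking a $1\times 2$ example against both conventions. The remaining steps are immediate.
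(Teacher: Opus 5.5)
Your Steps 1 and 2 are correct and are the paper's proof. The paper writes $\|A^*B\|_{p,\infty}^2=\max_i\max_{\|z\|_{p'}\le1}|z^*A^*Be_i|^2$ and splits it by Cauchy--Schwarz into $(z^*A^*Az)\,(B^*B)_{ii}$. It then bounds the first factor by Lemma~\ref{lem:dualitybound} and the second by the largest entry, exactly as you do with $w$ and $b_j$.

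What is wrong is the fallback in your last paragraph. Mixed norms are not invariant under conjugate transposition. Applying the swapped inequality to $B^*A$ therefore bounds $\|B^*A\|_{p,\infty}$, which under a fixed convention is a different quantity from $\|A^*B\|_{p,\infty}$, so nothing transfers.

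In fact, if the inner $\ell_p$ sum ran over the $B$-index, the proposition would be false. Take $A$ to be a single unit column $a$, and let $B$ have two columns, both equal to $a$. Then $A^*B=\begin{pmatrix}1&1\end{pmatrix}$, $\|A^*A\|_{p,p}=1$ and $\|B^*B\|_{\infty,\infty}=1$. But the row-wise value of $\|A^*B\|_{p,\infty}^2$ is $2^{2/p}>1$ for every finite $p$. This is exactly the $1\times2$ test you proposed, and it refutes the fallback rather than confirming it.

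So the convention cannot be repaired either way. The statement holds only under the column-wise reading $\|X\|_{p,\infty}=\max_j\|Xe_j\|_p$. That is the reading the paper's proof uses (through $A^*Be_i$), even though the appendix's displayed formula, with the inner sum over $j$, literally puts the inner sum on the $B$-index. Your worry was therefore not hypothetical. Drop the transposition fallback and fix the column-wise reading explicitly at the start; with that, your argument is complete.
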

\begin{proof}
By duality and Cauchy--Schwarz,
\[
\|A^*B\|_{{p,\infty}}^2=\max_i\max_{\|z_i\|_{p'}\le1}|z_i^*A^*Be_i|^2
\le\max_i\max_{\|z_i\|_{p'}\le1}(z_i^*A^*Az_i)\,(B^*B)_{ii},
\]
and Lemma~\ref{lem:dualitybound} bounds the first factor by $\|A^*A\|_{{p,p}}$ and the second by $\|B^*B\|_{{\infty,\infty}}$.
\end{proof}

The next two statements concern the specialization $B=A$, i.e.\ the inequality
$\|Z\|_{{p,q}}^2\le\|Z\|_{{p,p}}\|Z\|_{{q,q}}$ for completely positive $Z$.

\begin{proposition}[infinite divisibility]\label{prop:infdiv}
Let $Z\in\R_{\ge0}^{n\times n}$ be positive semidefinite and suppose the Hadamard power $Z^{\odot p}=[z_{ij}^p]$ is positive semidefinite for some $p\ge1$.  Then $\|Z\|_{{p,q}}^2\le\|Z\|_{{p,p}}\|Z\|_{{q,q}}$ for every $q\ge p$.
\end{proposition}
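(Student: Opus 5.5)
The plan is to copy the proof of Proposition~\ref{prop:intp}, with the Schur product theorem replaced by the hypothesis that $Z^{\odot p}\succeq0$. Since $Z^{\odot p}$ is positive semidefinite with nonnegative entries, write it as a Gram matrix. Split its index set into a column set (the index $j$ summed inside the mixed norm) and a row set (the outer index $i$). Take both sets to be all of $[n]$ by doubling the index set, i.e.\ work with the block matrix
\[
\begin{bmatrix}Z^{\odot p}&Z^{\odot p}\\ Z^{\odot p}&Z^{\odot p}\end{bmatrix}=[C\ C]^{\mathsf T}[C\ C],\qquad Z^{\odot p}=C^{\mathsf T}C .
\]
Equivalently, just take $D=C$. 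The catch is that $C$ need not be entrywise nonnegative; only $C^{\mathsf T}C=Z^{\odot p}$ is.

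Rewrite the target in terms of $W:=Z^{\odot p}$ and $r:=q/p\ge1$. Then
\[
\|Z\|_{p,q}=\|W\|_{1,r}^{1/p},\qquad \|Z\|_{p,p}=\|W\|_{1,1}^{1/p},\qquad \|Z\|_{q,q}=\|W\|_{r,r}^{1/p}.
\]
So it suffices to prove $\|W\|_{1,r}^2\le\|W\|_{1,1}\|W\|_{r,r}$ for a PSD, entrywise nonnegative $W$.

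Next, rerun the duality argument of Proposition~\ref{prop:pone}, which needs only that $W=C^{\mathsf T}C$ be entrywise nonnegative and not that $C$ be. Nonnegativity gives $\|We_i\|_1=\mathbf 1^{\mathsf T}We_i$, so
\[
\|W\|_{1,r}=\sup_{\|u\|_{r'}\le1}\mathbf 1^{\mathsf T}Wu=\sup_{\|u\|_{r'}\le1}\langle C\mathbf 1,Cu\rangle\le\|C\mathbf 1\|_2\sup_{\|u\|_{r'}\le1}\|Cu\|_2 .
\]
The first factor is $(\mathbf 1^{\mathsf T}W\mathbf 1)^{1/2}=\|W\|_{1,1}^{1/2}$, again using nonnegativity. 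For the second factor, $\sup_u u^{\mathsf T}Wu\le\|W\|_{r,r}$ by Lemma~\ref{lem:dualitybound}. Squaring and raising to the power $1/p$ gives the claim.

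The main obstacle is conceptual rather than computational. I have to check that every step of Proposition~\ref{prop:pone} uses nonnegativity only of the products, and never of the factors $A,B$. I also need the norm convention to be right: $\|W\|_{1,r}$ must be the outer $\ell_r$ norm of the column $\ell_1$ norms, and this should be confirmed to match $\|Z\|_{p,q}$ under the appendix's convention. Since $Z$ is symmetric, transposition is harmless here.
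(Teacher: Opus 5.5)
Your proof is correct and follows the paper's argument: write $Z^{\odot p}=X^{\mathsf T}X$, note $\|Z\|_{p,q}=\|X^{\mathsf T}X\|_{1,q/p}^{1/p}$ (and likewise for the other two norms), and apply the $p=1$ bound of Proposition~\ref{prop:pone}. You also check explicitly that this bound needs entrywise nonnegativity only of the Gram matrix $C^{\mathsf T}C$, never of the factor $C$; the paper's one-line citation leaves this implicit, even though Proposition~\ref{prop:pone} is stated for nonnegative $A,B$ and $X$ need not be nonnegative.
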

\begin{proof}
Write $Z^{\odot p}=X^{\mathsf T}X$.  Then $\|Z\|_{{p,q}}=\|X^{\mathsf T}X\|_{{1,q/p}}^{1/p}$, and Proposition~\ref{prop:pone} bounds this by $\bigl(\|X^{\mathsf T}X\|_{{1,1}}\|X^{\mathsf T}X\|_{{q/p,q/p}}\bigr)^{1/2p}=\|Z\|_{{p,p}}^{1/2}\|Z\|_{{q,q}}^{1/2}$.
\end{proof}

\begin{corollary}[order at most three]\label{cor:smalln}
Let $Z\in\R^{n\times n}$ be positive semidefinite with $n\le3$, with no assumption that its entries are nonnegative.  Then $\|Z\|_{{p,q}}^2\le\|Z\|_{{p,p}}\|Z\|_{{q,q}}$ for all $1\le p\le q$.
\end{corollary}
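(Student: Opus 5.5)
The plan is to reduce to Proposition~\ref{prop:infdiv}: it suffices to show that for every positive semidefinite $Z$ of order $n\le3$ and every $p\ge1$, the entrywise absolute Hadamard power $|Z|^{\odot p}=[|z_{ij}|^p]$ is positive semidefinite. The mixed norms depend only on the moduli $|z_{ij}|$, so we may replace $Z$ by $|Z|$ throughout. The proof of Proposition~\ref{prop:infdiv} uses only a Gram factorization of the $p$-th Hadamard power together with Proposition~\ref{prop:pone} applied to $X^{\mathsf T}X$; entrywise nonnegativity of $X^{\mathsf T}X=|Z|^{\odot p}$ is automatic. Hence positive semidefiniteness of $|Z|^{\odot p}$ is all we need.

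\textbf{Step 1 (reduction to correlation form).} Write $Z=DCD$ with $D$ the diagonal of square roots of the diagonal entries, handling zero diagonal entries separately: such a row and column of a PSD matrix vanish, so the problem drops to a smaller order. Then $|Z|^{\odot p}=D^{p}|C|^{\odot p}D^{p}$, and it suffices to treat a correlation matrix $C$ with entries $c_{ij}$, $|c_{ij}|\le1$.

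\textbf{Step 2 (order two).} For $n=2$, $|C|^{\odot p}$ has unit diagonal and off-diagonal entry $|c_{12}|^p\le1$, so it is PSD.

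\textbf{Step 3 (order three, the main obstacle).} Put $a=|c_{12}|$, $b=|c_{13}|$, $c=|c_{23}|$. Positive semidefiniteness of $C$ gives $1+2\sigma abc-a^2-b^2-c^2\ge0$ for the appropriate sign $\sigma=\pm1$. Taking the worst case $\sigma=-1$ would lose information, so instead note that $|C|$ is PSD whenever $C$ is and $n\le3$. One way to see this: represent $C$ as the Gram matrix of unit vectors $v_1,v_2,v_3$ with pairwise angles $\theta_{ij}$, and form $|C|$ by replacing some angles with their supplements. On the sphere, the supplements still satisfy the triangle inequalities among the three angles and the bound $\theta_{12}+\theta_{13}+\theta_{23}\le2\pi$. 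These are exactly the conditions for three angles to be realizable, i.e. for a unit-diagonal $3\times3$ matrix with entries $\cos\theta_{ij}$ to be PSD.

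Then, with $a,b,c\in[0,1]$ and $\det|C|=1+2abc-a^2-b^2-c^2\ge0$, it remains to show $1+2a^pb^pc^p-a^{2p}-b^{2p}-c^{2p}\ge0$ for $p\ge1$. I would try to verify this in angular coordinates: set $a=\cos\alpha$ and so on with $\alpha,\beta,\gamma\in[0,\pi/2]$. Then the hypothesis is $\gamma\le\alpha+\beta$ and its cyclic versions, and the conclusion is the same triangle condition for the angles $\arccos(\cos^p\alpha)$ and their cyclic versions. The key one-variable fact needed is subadditivity of $\phi(x)=\arccos(\cos^p x)$ on $[0,\pi/2]$, together with monotonicity of $\phi$. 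Subadditivity would follow from concavity of $\phi$ with $\phi(0)=0$.

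Proving this concavity (or some equivalent subadditivity) is the step I expect to be hardest, and I would check it by differentiating twice. If it fails, my fallback is a direct argument: the determinant condition $a^{2p}+b^{2p}+c^{2p}-2(abc)^p\le1$ is monotone in $p$, via the substitution $t=p$ and convexity in $t$ of the left side along the ray.
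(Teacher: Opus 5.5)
Your outline is sound, but it takes a different route from the paper. The paper's proof is a three-line citation chain. First, for $n\le3$ the matrix $|Z|=[|z_{ij}|]$ is PSD, a classical fact it attributes to Marcus--Watkins without checking. Next, the FitzGerald--Horn theorem makes $|Z|^{\odot p}$ PSD because $p\ge1\ge n-2$. Finally, Proposition~\ref{prop:infdiv} applied to $|Z|$ finishes. You keep the first and last steps but reprove by hand the one instance of FitzGerald--Horn that is needed (order three, exponent at least one), using the spherical-triangle description of $3\times3$ correlation matrices. The paper's version is shorter and places the result under the general threshold $n-2$. Yours is elementary and self-contained, and with the determinant comparison below it also supplies the fact the paper takes from an unchecked reference.

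To turn the outline into a proof, you need to settle the step you flagged; it is true. With $y=\cos^2x$ and $1-y^p=p(1-y)\int_0^1\bigl(y+\tau(1-y)\bigr)^{p-1}d\tau$, one gets
\[
\phi'(x)^2=\frac{p^2\cos^{2p-2}x\,\sin^2x}{1-\cos^{2p}x}=\frac{p}{\int_0^1\bigl(1+\tau(y^{-1}-1)\bigr)^{p-1}\,d\tau},\qquad 0<x<\tfrac{\pi}{2}.
\]
Since $y^{-1}-1$ increases with $x$ and $p\ge1$, $\phi'\ge0$ is nonincreasing. So $\phi$ is concave on $[0,\pi/2]$ with $\phi(0)=0$, hence subadditive there.

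Subadditivity only helps when $\alpha+\beta\le\pi/2$, because $\phi$ is only defined on $[0,\pi/2]$. In the other case use $\phi(x)\ge x$, which follows from $\cos^px\le\cos x$: then $\phi(\gamma)\le\pi/2<\alpha+\beta\le\phi(\alpha)+\phi(\beta)$.

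The equivalence between positive semidefiniteness and the triangle inequalities, for angles in $[0,\pi/2]$, follows at once from the identity $1+2\cos\alpha\cos\beta\cos\gamma-\cos^2\alpha-\cos^2\beta-\cos^2\gamma=4\sin s\,\sin(s-\alpha)\sin(s-\beta)\sin(s-\gamma)$ with $2s=\alpha+\beta+\gamma$.

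Your claim that taking supplements preserves the spherical conditions is only asserted; the cases with one or three negative entries would need checking. You do not need it, though. Let $\sigma$ be the sign of $c_{12}c_{13}c_{23}$. Then $\det|C|=1+2abc-a^2-b^2-c^2\ge1+2\sigma abc-a^2-b^2-c^2=\det C\ge0$. The $2\times2$ principal minors of $|C|$ and $C$ coincide, so $|C|\succeq0$; the sign comparison you dismissed loses nothing.

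Finally, drop the fallback, because it rests on a false convexity claim. The expression $a^{2p}+b^{2p}+c^{2p}-2(abc)^p$ is not convex in $p$. At the PSD point $a=b=c=0.9$ it equals $3x^2-2x^3$ with $x=0.9^p$, and its second $p$-derivative at $p=1$ is negative.
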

\begin{proof}
For $n\le3$ a PSD $Z$ has $|Z|:=[|z_{ij}|]$ PSD as well.\footnote{This is a classical fact about small Hermitian matrices; the source manuscript for this case attributes it to Marcus and Watkins (1969), a reference the present text has not independently checked.}  Since $p\ge1\ge n-2$, the FitzGerald--Horn theorem \citep{FitzGeraldHorn1977} makes $|Z|^{\odot p}$ positive semidefinite, so Proposition~\ref{prop:infdiv} applies to $|Z|$.  The mixed norms depend only on $|z_{ij}|$, so the bound transfers to $Z$.
\end{proof}

Together, Propositions~\ref{prop:pone}, \ref{prop:intp}, \ref{prop:qinf} and Corollary~\ref{cor:smalln} confine any counterexample
to non-integer $p$, finite $q$, and order at least four.  Theorem~\ref{thm:mixed-general}
lives at $p=q=3/2$ and order four (as a Gram matrix of two $3\times2$ blocks), and
Theorem~\ref{thm:mixed} at $(6/5,6)$ and order $21$ --- in each case the smallest region the
positive results leave open.

% ==== end tex/03-proof-details.tex ====

% ==== tex/04-aim36-pencil.tex ====
\section{Specht modules and Borcea--Br\"and\'en's Problems 36, 37}
\label{sec:aim36pencil}

The integer matrices of Theorems~\ref{thm:aim36} and~\ref{thm:aim37} are not a numerical accident,
and this appendix says where they come from.  They are the $t=5$ and $t=4$
members of one family of pencils built equivariantly on the Specht module
$S^{(3,2)}$, whose $s_{(1^5)}$ coefficient is a fixed quintic in $t$ that turns
negative once $t$ is large enough.  Both problems are refuted by that single
coefficient, but at different values of $t$, and the appendix ends by locating
the two thresholds.

\subsection*{The module and its projections}

Let $M$ be the real permutation module with orthonormal basis
$\{e_{ab}:1\le a<b\le5\}$ indexed by the $2$-subsets of $[5]$, and let
\[
B:M\longrightarrow\R^5,\qquad B(e_{ab})=e_a+e_b
\]
be the $\mathfrak S_5$-equivariant map that forgets which pair produced which
points.  Row $i$ of $B$ has a $1$ in the four columns $ab$ with $i\in\{a,b\}$,
and two rows share exactly one such column, so
\[
BB^{\mathsf T}=3I_5+\mathbf 1\mathbf 1^{\mathsf T},
\]
which is nonsingular.  Hence $B$ is surjective and $V:=\ker B$ has dimension
$10-5=5$.  Young's rule for the permutation module on $2$-subsets gives
$M\cong S^{(5)}\oplus S^{(4,1)}\oplus S^{(3,2)}$, while the point-permutation
module $\R^5$ is $S^{(5)}\oplus S^{(4,1)}$, so
\[
V\cong S^{(3,2)}.
\]
Write $\rho$ for the representation of $\mathfrak S_5$ on $V$, orthogonal for
the inner product $V$ inherits from $M$.  For $i\in[5]$ set
\begin{equation}\label{eq:aim36-Pi}
P_i:=\frac12\sum_{\substack{1\le a<b\le5\\a,b\ne i}}\rho((ab)),
\end{equation}
half the sum of the transpositions fixing $i$.

\begin{lemma}\label{lem:aim36-proj}
Each $P_i$ is an orthogonal projection of rank three,
$\rho(\sigma)P_i\rho(\sigma)^{-1}=P_{\sigma(i)}$ for every
$\sigma\in\mathfrak S_5$, and $\sum_{i=1}^5P_i=3I_V$.
\end{lemma}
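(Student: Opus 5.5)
The plan is to compute everything through the isotypic decomposition of $M$ under $\mathfrak S_4=\mathrm{Stab}(i)$, using character theory for the $\mathfrak S_5$-action.

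\emph{Equivariance.} I would do this first, since it is immediate. Conjugation gives $\rho(\sigma)\rho((ab))\rho(\sigma)^{-1}=\rho((\sigma(a)\,\sigma(b)))$. Moreover $\sigma$ maps the transpositions fixing $i$ bijectively onto those fixing $\sigma(i)$. Hence $\rho(\sigma)P_i\rho(\sigma)^{-1}=P_{\sigma(i)}$.

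\emph{The sum.} Each transposition $(ab)$ fixes exactly three points of $[5]$, so it appears in exactly three of the $P_i$. Therefore
\[
\sum_i P_i=\tfrac32\sum_{a<b}\rho((ab)).
\]
The class sum of transpositions acts on the irreducible $S^\lambda$ by the scalar $\binom52\chi^\lambda((12))/f^\lambda$. For $\lambda=(3,2)$ we have $\chi=1$ and $f=5$, giving the scalar $10\cdot 1/5=2$. Equivalently, the content sum of $(3,2)$ is $0+1+2-1+0=2$. Hence $\sum_iP_i=\tfrac32\cdot 2\,I_V=3I_V$.

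\emph{Projection of rank three.} $P_i$ is symmetric because $\rho$ is orthogonal and each transposition is an involution. Also $P_i=\tfrac12 T$, where $T$ is the transposition class sum of the subgroup $\mathfrak S_4$ fixing $i$, so $T$ commutes with $\rho(\mathfrak S_4)$. I would restrict $V\cong S^{(3,2)}$ to $\mathfrak S_4$ using the branching rule:
\[
S^{(3,2)}\big|_{\mathfrak S_4}\cong S^{(3,1)}\oplus S^{(2,2)},
\]
of dimensions $3+2$. On $S^{\mu}$ the class sum $T$ acts by the content sum of $\mu$. For $(3,1)$ the content sum is $0+1+2-1=2$, and for $(2,2)$ it is $0+1-1+0=0$. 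So $P_i$ acts as $1$ on the $3$-dimensional isotypic piece and as $0$ on the $2$-dimensional one. It is therefore an orthogonal projection of rank three. As a consistency check, $\operatorname{tr}\sum_iP_i=5\cdot3=15=3\dim V$.

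\emph{Main obstacle.} The main risk is justifying the scalar-by-content-sum claim, that is, the Jucys--Murphy fact that the transposition class sum acts on $S^\mu$ by the sum of the contents. A second risk is confirming $V\cong S^{(3,2)}$ rather than merely being an abstract 5-dimensional module; the paper has already established this from Young's rule. If I prefer to avoid the content facts, I would instead compute the characters directly. The character of $M$ at a transposition is $4$ and that of $\R^5$ is $3$, so the character of $V$ at a transposition is $1$. This recovers the scalar $10\cdot1/5=2$. For the restriction, I would read off the $\mathfrak S_4$-character of $V$ from $M$ minus $\R^5$ and decompose it into irreducibles.
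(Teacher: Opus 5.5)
Your proposal is correct and follows the paper's proof essentially step for step: the branching $S^{(3,2)}\!\downarrow_{\mathfrak S_4}\cong S^{(3,1)}\oplus S^{(2,2)}$ with content sums $2$ and $0$ gives the rank-three projection, and conjugating the transposition class sum gives covariance. The only variation is the identity $\sum_iP_i=3I_V$. You write the sum as $\tfrac32$ times the full $\mathfrak S_5$ transposition class sum and use its scalar $2$ on $S^{(3,2)}$, whereas the paper uses Schur's lemma together with the trace $5\cdot 3=15$; both arguments are valid.
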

\begin{proof}
The transpositions in \eqref{eq:aim36-Pi} are exactly those of the point
stabilizer $H_i\cong\mathfrak S_4$, and their sum is the class sum of
transpositions, which is central in $\R[\mathfrak S_4]$ and therefore acts on an
irreducible Specht module $S^\mu$ by the sum of the contents of $\mu$.  By
branching,
\[
S^{(3,2)}\!\downarrow_{\mathfrak S_4}\cong S^{(3,1)}\oplus S^{(2,2)},
\]
and the content sums are $0+1+2-1=2$ for $(3,1)$ and $0+1-1+0=0$ for $(2,2)$.
So the operator in \eqref{eq:aim36-Pi} is the identity on the three-dimensional
$S^{(3,1)}$ summand and zero on the two-dimensional $S^{(2,2)}$ summand: an
orthogonal projection of rank three.  Covariance follows by conjugating the
class sum, since conjugation by $\rho(\sigma)$ carries the transpositions
fixing $i$ to those fixing $\sigma(i)$.  Finally $\sum_iP_i$ commutes with
$\rho(\mathfrak S_5)$, so Schur's lemma makes it a scalar on the irreducible
$V$; its trace is $5\cdot3=15$, hence it is $3I_V$.
\end{proof}

\subsection*{The one-parameter family}

For $t\ge0$ put
\[
A_i(t):=I_V+tP_i,\qquad
p_t(x):=\det\Bigl(\nlsum_{i=1}^5x_iA_i(t)\Bigr),
\]
the determinant taken on $V$.  By Lemma~\ref{lem:aim36-proj} each $A_i(t)$ is
symmetric positive definite with eigenvalues $1+t$ of multiplicity three and
$1$ of multiplicity two.

\begin{proposition}\label{prop:aim36-family}
For every $t\ge0$ the polynomial $p_t$ is symmetric, homogeneous of degree five,
real stable, and has nonnegative coefficients in the monomial basis.  Its Schur
coefficients are
\begin{align*}
[s_{(5)}]p_t&=(t+1)^3,\\
[s_{(4,1)}]p_t&=\tfrac12(t+1)^2(3t^2+8t+8),\\
[s_{(3,2)}]p_t&=\tfrac1{16}(t+1)(9t^4+48t^3+128t^2+160t+80),\\
[s_{(3,1,1)}]p_t&=\tfrac1{16}(t+1)(9t^4+64t^3+168t^2+192t+96),\\
[s_{(2,2,1)}]p_t&=\tfrac18(6t^5+41t^4+108t^3+156t^2+120t+40),\\
[s_{(2,1,1,1)}]p_t&=\tfrac18(6t^5+35t^4+96t^3+132t^2+96t+32),\\
[s_{(1^5)}]p_t&=-\tfrac1{16}(9t^5-21t^4-56t^3-72t^2-48t-16).
\end{align*}
In particular $[s_{(1^5)}]p_5=-743/2<0$.
\end{proposition}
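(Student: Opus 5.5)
The qualitative properties come first, and they are cheap. Homogeneity of degree five holds because $\dim V=5$. For real stability, each $A_i(t)\succ0$ by Lemma~\ref{lem:aim36-proj}, so $p_t$ is a determinantal pencil with positive definite coefficients. For symmetry, the covariance $\rho(\sigma)P_i\rho(\sigma)^{-1}=P_{\sigma(i)}$ gives
\[
\rho(\sigma)\Bigl(\nlsum_i x_iA_i\Bigr)\rho(\sigma)^{-1}=\nlsum_i x_{\sigma^{-1}(i)}A_i,
\]
and the determinant is invariant under conjugation. Nonnegativity of the monomial coefficients follows from the standard mixed-discriminant expansion: the coefficient of $x^\alpha$ is a positive multiple of a mixed discriminant of PSD matrices, hence $\ge0$.

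For the Schur coefficients I would not expand symbolically in general. Instead I would use that the $s_\lambda$-coefficient of a symmetric polynomial in five variables is a fixed integer combination of the monomial coefficients $[m_\mu]$, obtained by inverting the Kostka matrix as in Theorem~\ref{thm:aim36}. So the task reduces to computing the seven numbers $[x^\mu]p_t$, each a polynomial in $t$ of degree at most $5$.

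To compute these, write $A_i=I+tP_i$. Multilinearity of the mixed discriminant, applied to the repeated arguments, turns each coefficient into a sum over subsets $T$ of the slots that receive $tP_i$ rather than $I$. Each term is a mixed discriminant of projections, which is a sum of terms of the form
\[
\det\bigl(\text{Gram matrix of bases of the ranges }P_i(V)\bigr).
\]
Equivalently, each coefficient is the coefficient of $x^\mu$ in
\[
\det\Bigl(\bigl(\nlsum_i x_i\bigr)I+t\nlsum_i x_iP_i\Bigr),
\]
which one can expand in $t$ through the elementary symmetric functions of eigenvalues of $\sum_i x_iP_i$.

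The practical route is the one the paper already used. Fix the explicit basis of $V$ with Gram matrix $G$, in which $2G(I+tP_i)$ are integer-polynomial matrices; this is the same setup behind the $J_i$. Then compute $\det(\sum_i x_iA_i(t))$ exactly, with $t$ kept symbolic. Since every coefficient has degree at most $5$ in $t$, it also suffices to evaluate at six rational values of $t$ and interpolate; the values $t=4$ and $t=5$ are already available from Theorems~\ref{thm:aim37} and~\ref{thm:aim36} up to the known normalizations.

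Two consistency checks support the formulas. At $t=0$ we get $p_0=(x_1+\cdots+x_5)^5=\sum_\lambda f^\lambda s_\lambda$, and indeed $1,4,5,6,5,4,1$ is recovered. The factor $(t+1)^3$ of $[s_{(5)}]=[m_{(5)}]=\det A_1$ is also visible directly.

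The final claim is then arithmetic:
\[
9\cdot3125-21\cdot625-56\cdot125-72\cdot25-240-16=5944,
\qquad
[s_{(1^5)}]p_5=-\frac{5944}{16}=-\frac{743}{2}.
\]

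The main obstacle is the exact symbolic determinant expansion, or the equivalent bookkeeping of mixed discriminants of the projections, which has no conceptual shortcut. I would rely on an explicit basis and certified rational arithmetic, cross-checked against the two integer certificates already established.
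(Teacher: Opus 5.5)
Your proposal is correct and follows the paper's own proof. Symmetry comes from the covariance of the $P_i$, stability from positive definiteness of the $A_i(t)$, and nonnegativity from mixed discriminants. The Schur coefficients come from an exact expansion into monomial coefficients $c_\mu(t)$, followed by inversion of the Kostka matrix. The only difference is presentational: the paper writes out the seven polynomials $c_\mu(t)$ and the last column $(1,-2,-2,3,3,-4,1)^{\mathsf T}$ of $K^{-1}$. That makes the step you leave to machine computation checkable by hand, and your $t=0$ and $(t+1)^3$ checks agree with that table.
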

\begin{proof}
Symmetry is the covariance of Lemma~\ref{lem:aim36-proj}: relabeling the variables
by $\sigma$ conjugates the pencil by $\rho(\sigma)$, which leaves its
determinant alone.  For real stability, suppose $z_1,\ldots,z_5$ lie in the open
upper half-plane and $\bigl(\sum_iz_iA_i(t)\bigr)v=0$ for some $v\ne0$.  Then
\[
0=\operatorname{Im}\Bigl\langle v,\Bigl(\nlsum_iz_iA_i(t)\Bigr)v\Bigr\rangle
 =\nlsum_i\operatorname{Im}(z_i)\,\langle v,A_i(t)v\rangle>0,
\]
because each $A_i(t)$ is positive definite --- a contradiction.  Homogeneity of
degree five is immediate.  The coefficient of $x_1^{\alpha_1}\cdots
x_5^{\alpha_5}$ is a positive multiple of the mixed discriminant of the multiset
holding $\alpha_i$ copies of $A_i(t)$, and mixed discriminants of positive
semidefinite matrices are nonnegative, so the monomial coefficients are
nonnegative --- here in fact strictly positive.

For the coefficients themselves, expanding the determinant exactly gives
$p_t=\sum_{\mu\vdash5}c_\mu(t)m_\mu$ with
\begin{align*}
c_{(5)}(t)&=(t+1)^3,\\
c_{(4,1)}(t)&=\tfrac12(t+1)^2(3t^2+10t+10),\\
c_{(3,2)}(t)&=\tfrac1{16}(t+1)(9t^4+72t^3+232t^2+320t+160),\\
c_{(3,1,1)}(t)&=\tfrac18(t+1)(t+2)(9t^3+62t^2+120t+80),\\
c_{(2,2,1)}(t)&=\tfrac1{16}(39t^5+317t^4+1040t^3+1728t^2+1440t+480),\\
c_{(2,1,1,1)}(t)&=\tfrac18(45t^5+348t^4+1100t^3+1764t^2+1440t+480),\\
c_{(1^5)}(t)&=\tfrac18(99t^5+765t^4+2320t^3+3600t^2+2880t+960),
\end{align*}
every numerator having positive coefficients, which is the promised positivity
again.  In the partition order
$(5),(4,1),(3,2),(3,1,1),(2,2,1),(2,1,1,1),(1^5)$ the Kostka matrix of
$s_\lambda=\sum_\mu K_{\lambda\mu}m_\mu$ is
\[
K=\begin{pmatrix}
1&1&1&1&1&1&1\\
0&1&1&2&2&3&4\\
0&0&1&1&2&3&5\\
0&0&0&1&1&3&6\\
0&0&0&0&1&2&5\\
0&0&0&0&0&1&4\\
0&0&0&0&0&0&1
\end{pmatrix},
\]
and if $c$ and $a$ are the row vectors of monomial-symmetric and Schur
coefficients then $c=aK$.  Applying $K^{-1}$ to the $c_\mu(t)$ gives the table
above.  The last line needs only the final column of $K^{-1}$,
$(1,-2,-2,3,3,-4,1)^{\mathsf T}$:
\begin{align*}
[s_{(1^5)}]p_t&=c_{(5)}-2c_{(4,1)}-2c_{(3,2)}+3c_{(3,1,1)}+3c_{(2,2,1)}
  -4c_{(2,1,1,1)}+c_{(1^5)}\\
&=-\tfrac1{16}(9t^5-21t^4-56t^3-72t^2-48t-16).
\end{align*}
At $t=5$ this is $-(28125-13125-7000-1800-240-16)/16=-5944/16=-743/2$.
\end{proof}

\begin{remark}[Two thresholds]
The family crosses the two problems at different places, and the gap between
them is where Theorem~\ref{thm:aim37} lives.  Schur positivity survives exactly as long
as $[s_{(1^5)}]p_t\ge0$, that is until the single positive real zero
$t_2\approx4.2907656153$ of $9t^5-21t^4-56t^3-72t^2-48t-16$; past $t_2$ the
family refutes Problem~\ref{ctx:aim-problem-36}.  The lower endpoint of
Problem~\ref{ctx:aim-problem-37} goes earlier.  Since $f^{(1^5)}=1$ and
$[s_{(5)}]p_t=(t+1)^3$, its margin at $\lambda=(1^5)$ is
\[
(t+1)^3-[s_{(1^5)}]p_t=\frac{t^2\bigl(9t^3-21t^2-40t-24\bigr)}{16},
\]
so the endpoint fails exactly past the cubic's single positive zero
$t_1\approx3.7205453558$.  On $(t_1,t_2)$ the family is therefore still Schur
positive and already short of the endpoint --- the substantive refutation of
Problem~\ref{ctx:aim-problem-37} --- and $t=4$ is the only integer there, which
is why Theorem~\ref{thm:aim37} is stated at it.  Beyond $t_2$ the two failures merge and
stop being distinguishable, which is the reason $q$ at $t=5$ cannot serve as a
witness for the endpoint.  Both decimals are stated for orientation only: no step
of the proof, and no assertion in the certificate, uses a floating-point value.
\end{remark}

\subsection*{Back to the integer pencil}

What remains is bookkeeping.  Choose a rational basis of $V=\ker B$, assemble it
as the columns of a matrix $U$, and let $G=U^{\mathsf T}U$ be its Gram matrix.
In that basis $P_i$ is no longer symmetric but $G$-self-adjoint,
$P_i^{\mathsf T}G=GP_i$, so with
\[
J_i(t):=2G\bigl(I+tP_i\bigr)
\]
we get $J_i(t)^{\mathsf T}=2(I+tP_i)^{\mathsf T}G=2G(I+tP_i)=J_i(t)$: an ordinary
symmetric matrix, positive definite because $A_i(t)$ is, and at $t=4$ and $t=5$
an integer one in the basis the certificate picks.  Since $\det(G)=162$ there,
\[
\det\Bigl(\nlsum_ix_iJ_i(t)\Bigr)=2^5\det(G)\,p_t(x)=5184\,p_t(x).
\]
At $t=5$, writing $J_i:=J_i(5)$ and dividing by $648$ leaves
\[
q(x)=\frac1{648}\det\Bigl(\nlsum_ix_iJ_i\Bigr)=8\,p_5(x),
\]
which is the normalization in \eqref{eq:aim36-q}, and
$[s_{(1^5)}]q=8\cdot(-743/2)=-2972$, the coefficient of Theorem~\ref{thm:aim36}.  The
factor $8$ is what clears the denominators of
Proposition~\ref{prop:aim36-family} and leaves that whole Schur expansion
integral.  At $t=4$ no such factor is needed --- $p_4$ is already integral --- so
$\widetilde J_i:=J_i(4)$ and
\[
\widetilde q(x)=\frac1{5184}\det\Bigl(\nlsum_ix_i\widetilde J_i\Bigr)=p_4(x),
\]
the witness of Theorem~\ref{thm:aim37}, with $[s_{(1^5)}]\widetilde q=69$ against
$[s_{(5)}]\widetilde q=(4+1)^3=125$.  The two pencils are related by
$\widetilde J_i=(4J_i+2G)/5$, which is how \eqref{eq:aim36-qtilde} can be
written down from the $J_i$ and $G$ alone.

The certificate \path{counterexamples/aim-problems/verify_pencil.py} recomputes all
of this in exact rational arithmetic: it builds $V$ and the $P_i$ and checks
idempotence, $G$-self-adjointness, rank, covariance and $\sum_iP_i=3I$; forms
both pencils and certifies positive definiteness by exact leading principal
minors; expands each determinant into all $126$ monomials; derives $K$ by
enumerating semistandard Young tableaux rather than quoting it, and reads
$f^\lambda$ off its last column rather than quoting that either; inverts to get
the Schur expansions and re-extracts every coefficient independently by the
bialternant formula; verifies the closed formulas of
Proposition~\ref{prop:aim36-family} as identities in $t$; and checks that
$\widetilde q$ is Schur positive, that its lower endpoint fails at $(1^5)$ and
nowhere else, and that $t=4$ lies strictly between the two thresholds above.

% ==== end tex/04-aim36-pencil.tex ====

% ==== tex/05-macdonald-omega.tex ====
\section{Additional results for McSwiggen--Sahi Theorem 2.1}
\label{sec:macdonaldomega}

Theorem~\ref{thm:macdonald} refutes Schur-convexity at $q=t$, where Macdonald
polynomials are Schur polynomials and the arithmetic is smallest.  Nothing about
the failure needs that specialization.  This appendix records the reversal for
all $q,t\in(0,1)$, identifies the lattice points at which it is admissible,
locates the step at which the claim goes wrong, and says what survives.

Throughout, $n=2$, $\lambda=(2,0)$ and $\mu=(1,1)$, so that $\lambda\succeq\mu$,
and the evaluation point is $x=(1,1)$.  Write $t^\delta=(t,1)$.

\subsection*{The rank-two ratio}

Only two Macdonald polynomials are involved, and neither is quoted: in degree
two the triangularity $P_\lambda=m_\lambda+\text{(lower terms)}$ leaves a single
unknown, and orthogonality in the $(q,t)$ Hall inner product fixes it.

\begin{lemma}\label{lem:macdonald-rank2}
For $n=2$, $P_{(1,1)}(x_1,x_2;q,t)=x_1x_2$ and
\[
P_{(2,0)}(x_1,x_2;q,t)=x_1^2+x_2^2+A\,x_1x_2,
\qquad
A=\frac{(1+q)(1-t)}{1-qt}.
\]
At $q=t$ this is $A=1$, so $P_{(2,0)}=s_{(2)}$.
\end{lemma}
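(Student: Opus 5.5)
The plan is to determine both polynomials from the two defining properties of Macdonald polynomials: unitriangularity in the monomial basis with respect to dominance, and orthogonality in the $(q,t)$ Hall inner product $\langle p_\lambda,p_\mu\rangle_{q,t}=\delta_{\lambda\mu}z_\lambda\prod_i\frac{1-q^{\lambda_i}}{1-t^{\lambda_i}}$. In degree two the partitions are $(2)\succ(1,1)$.

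First, $(1,1)$ is the bottom of the dominance order, so triangularity forces $P_{(1,1)}=m_{(1,1)}=e_2$. Restricted to two variables this is $x_1x_2$. Second, triangularity leaves one unknown, $P_{(2)}=m_{(2)}+A\,m_{(1,1)}$, and the single orthogonality condition $\langle P_{(2)},m_{(1,1)}\rangle_{q,t}=0$ determines it.

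To carry this out, I will work in the power-sum basis. We have $m_{(2)}=p_2$ and $m_{(1,1)}=\tfrac12(p_1^2-p_2)$, with
\[
\langle p_2,p_2\rangle_{q,t}=2\,\frac{1-q^2}{1-t^2},\qquad
\langle p_1^2,p_1^2\rangle_{q,t}=2\Bigl(\frac{1-q}{1-t}\Bigr)^2,\qquad
\langle p_2,p_1^2\rangle_{q,t}=0.
\]
From these I compute $\langle m_{(2)},m_{(1,1)}\rangle=-\tfrac{1-q^2}{1-t^2}$ and $\langle m_{(1,1)},m_{(1,1)}\rangle=\tfrac12\bigl[(\tfrac{1-q}{1-t})^2+\tfrac{1-q^2}{1-t^2}\bigr]$. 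Their ratio is
\[
A=\frac{2(1+q)(1-t)}{(1-q)(1+t)+(1+q)(1-t)}.
\]
The denominator simplifies to $2(1-qt)$, so $A=\frac{(1+q)(1-t)}{1-qt}$.

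Finally I restrict to $n=2$. Macdonald polynomials are stable under setting variables to zero, and $m_{(2)}\mapsto x_1^2+x_2^2$, $m_{(1,1)}\mapsto x_1x_2$, which gives the stated formula. At $q=t$ the value is $A=(1+t)(1-t)/(1-t^2)=1$, so $P_{(2)}=h_2=s_{(2)}$, consistent with the Schur specialization. The only real obstacle is keeping the $z_\lambda$ normalizations straight. As a cross-check I would verify that $P_{(2)}$ is an eigenvector of the two-variable Macdonald operator $D$, comparing the $x_1x_2$ coefficients of $D(x_1^2+x_2^2+Ax_1x_2)$.
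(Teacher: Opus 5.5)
Your proposal is correct and follows the paper's proof essentially line for line: triangularity forces $P_{(1,1)}=m_{(1,1)}=e_2$, and the single orthogonality condition $\langle m_{(2)}+A\,m_{(1,1)},m_{(1,1)}\rangle_{q,t}=0$, worked out in power-sum coordinates, gives the same bracket $(1-q)(1+t)+(1+q)(1-t)=2(1-qt)$. One small wording slip: $A$ is \emph{minus} the ratio of the two inner products you list, since $\langle m_{(2)},m_{(1,1)}\rangle_{q,t}$ is negative, but your final formula for $A$ is already correct.
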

\begin{proof}
$P_{(1,1)}=e_2=m_{(1,1)}$ is forced, being the lowest partition of $2$.  Write
$P_{(2,0)}=m_{(2)}+A\,m_{(1,1)}$, and use coordinates in the power sums:
$m_{(2)}=p_2$ and $m_{(1,1)}=e_2=(p_1^2-p_2)/2$.  With
$\langle p_\lambda,p_\mu\rangle_{q,t}=\delta_{\lambda\mu}z_\lambda\prod_i(1-q^{\lambda_i})/(1-t^{\lambda_i})$
we have $\langle p_2,p_2\rangle=2(1-q^2)/(1-t^2)$,
$\langle p_1^2,p_1^2\rangle=2\bigl((1-q)/(1-t)\bigr)^2$ and
$\langle p_2,p_1^2\rangle=0$, so $\langle P_{(2,0)},P_{(1,1)}\rangle=0$ reads
\[
A\,\frac{(1-q)^2}{(1-t)^2}=(2-A)\,\frac{1-q^2}{1-t^2},
\qquad\text{i.e.}\qquad
A\bigl[(1-q)(1+t)+(1+q)(1-t)\bigr]=2(1+q)(1-t).
\]
The bracket is $2(1-qt)$, which gives $A$.
\end{proof}

\begin{proposition}\label{prop:macdonald-general}
For all $q,t\in(0,1)$,
\[
\Omega_{(2,0)}(1,1;q,t)=\frac{2+A}{1+t^2+At},
\qquad
\Omega_{(1,1)}(1,1;q,t)=\frac1t,
\]
and
\begin{equation}\label{eq:macdonald-gap}
\Omega_{(1,1)}(1,1;q,t)-\Omega_{(2,0)}(1,1;q,t)
=\frac{(1-t)^2(1-qt)}{t\,(1+t)\,(1-qt^2)}\;>\;0 .
\end{equation}
\end{proposition}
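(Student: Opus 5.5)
The plan is direct substitution using Lemma~\ref{lem:macdonald-rank2}, followed by one rational simplification. By definition $\Omega_\lambda(x;q,t)=P_\lambda(x;q,t)/P_\lambda(t^\delta;q,t)$ with $t^\delta=(t,1)$ for $n=2$. First I evaluate the numerators at $x=(1,1)$. The lemma gives $P_{(2,0)}(1,1)=2+A$ and $P_{(1,1)}(1,1)=1$. Next I evaluate the denominators at $(t,1)$: $P_{(2,0)}(t,1)=t^2+1+At$ and $P_{(1,1)}(t,1)=t$. Both are positive for $q,t\in(0,1)$, since $A>0$ because $1-qt>0$. This yields the two displayed formulas immediately.

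For the gap \eqref{eq:macdonald-gap}, I put everything over the common denominator $t(1+t^2+At)$. The numerator is
\[
(1+t^2+At)-t(2+A)=1-2t+t^2=(1-t)^2,
\]
because the $At$ terms cancel. So the difference is $(1-t)^2/\bigl(t(1+t^2+At)\bigr)$.

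The remaining step is the only one needing care. I substitute $A=(1+q)(1-t)/(1-qt)$ into $1+t^2+At$ and multiply through by $1-qt$:
\[
(1+t^2)(1-qt)+t(1+q)(1-t).
\]
Expanding gives $1+t^2-qt-qt^3+t-t^2+qt-qt^2$. This simplifies to $1+t-qt^2-qt^3=(1+t)(1-qt^2)$. Hence
\[
1+t^2+At=\frac{(1+t)(1-qt^2)}{1-qt},
\]
and inverting yields exactly the stated fraction $(1-t)^2(1-qt)/\bigl(t(1+t)(1-qt^2)\bigr)$.

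Strict positivity then follows factor by factor for $q,t\in(0,1)$. The factor $(1-t)^2$ is positive because $t\neq1$. The factors $1-qt$ and $1-qt^2$ are positive because $qt<1$ and $qt^2<1$. The factors $t$ and $1+t$ are positive as well.

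I expect the factorization of $(1+t^2)(1-qt)+t(1+q)(1-t)$ to be the only obstacle. It is a routine expansion, but it has to be checked term by term. As a sanity check, setting $q=t$ should reproduce Theorem~\ref{thm:macdonald}'s gap $1/r-3/(1+r+r^2)=(1-r)^2/\bigl(r(1+r+r^2)\bigr)$. This holds because $(1+t)(1-t^3)/(1-t^2)=1+t+t^2$.
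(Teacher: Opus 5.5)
Your proposal is correct and follows essentially the same route as the paper's proof: evaluate the Lemma~\ref{lem:macdonald-rank2} formulas at $(1,1)$ and $(t,1)$, cancel the $At$ terms to get $(1-t)^2/\bigl(t(1+t^2+At)\bigr)$, and then use the factorization $1+t^2+At=(1+t)(1-qt^2)/(1-qt)$ to read off each sign. Your term-by-term expansion checks out, and your $q=t$ sanity check matches the paper's remark following the proposition.
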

\begin{proof}
By Lemma~\ref{lem:macdonald-rank2}, $P_{(2,0)}(1,1)=2+A$ and
$P_{(2,0)}(t,1)=1+t^2+At$, while $P_{(1,1)}(1,1)=1$ and $P_{(1,1)}(t,1)=t$,
which gives the two values.  Subtracting,
\[
\frac1t-\frac{2+A}{1+t^2+At}
=\frac{1+t^2+At-t(2+A)}{t(1+t^2+At)}
=\frac{(1-t)^2}{t(1+t^2+At)},
\]
the term $At$ canceling --- so the gap does not depend on $A$ except through
the denominator.  Substituting $A$ and clearing,
\[
1+t^2+At=\frac{(1+t)(1-qt^2)}{1-qt},
\]
which turns the previous display into \eqref{eq:macdonald-gap}.  On $q,t\in(0,1)$
each of $(1-t)^2$, $1-qt$, $t$, $1+t$ and $1-qt^2$ is strictly positive.
\end{proof}

At $q=t=r$ this is $\tfrac1r-\tfrac3{1+r+r^2}=\tfrac{(1-r)^2}{r(1+r+r^2)}$, the
computation carried in Theorem~\ref{thm:macdonald}.

\subsection*{Where the point is admissible}

The reversal above is an identity in $q$ and $t$, but it refutes the claim only
where $x=(1,1)$ is a legal evaluation point, i.e.\ lies on $\mathcal L_n$.

\begin{lemma}\label{lem:macdonald-lattice}
Let $t=q^k$ with $k$ a positive integer and $a=1$.  Then
$1^n\in\mathcal L_n^{q,q^k,1}$.
\end{lemma}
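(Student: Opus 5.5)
We need a lattice index $\mu\in\mathbb Z^n$ with $\mu_1\ge\cdots\ge\mu_n$ such that $q^{-\mu_j}t^{n-j}=1$ for every $j=1,\ldots,n$. With $t=q^k$ this reads $q^{-\mu_j+k(n-j)}=1$. Because $0<q<1$, the map $m\mapsto q^m$ is injective on $\mathbb Z$, so the condition is equivalent to $\mu_j=k(n-j)$ for each $j$.

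The plan is therefore to set $\mu=(k(n-1),k(n-2),\ldots,k,0)$, that is, $\mu=k\delta$. This vector has integer entries because $k$ is a positive integer. It is weakly decreasing, in fact strictly decreasing, since $k>0$. So $\mu$ is an admissible dominant index in the definition of $\mathcal L_n^{q,t,a}$.

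Substituting back with $a=1$, the $j$-th coordinate of the lattice point is $q^{-k(n-j)}q^{k(n-j)}=1$, so the lattice point is $1^n$. This shows $1^n\in\mathcal L_n^{q,q^k,1}$. As a sanity check, $n=2$ and $k=1$ give $\mu=(1,0)$, which is exactly the index $\nu$ used in Theorem~\ref{thm:macdonald}.

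No step here is a real obstacle; the argument is direct. The only point needing care is the remark that integrality of $\mu$ is exactly where $k\in\mathbb N$ is used: for non-integer $k$ the forced exponents $k(n-j)$ are not integers, so $1^n$ is not on the lattice.
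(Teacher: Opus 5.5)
Your proof is correct and is the same as the paper's: take $\mu_i=k(n-i)$, observe that it is an integer, strictly decreasing sequence, and check that each coordinate $q^{-k(n-i)}q^{k(n-i)}$ equals $1$. Your injectivity remark shows that this $\mu$ is forced, so integrality of $k$ is genuinely needed when $n\ge2$; the paper does not state this, but it is accurate.
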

\begin{proof}
Take $\mu_i=k(n-i)$, which is a partition: $\mu_i-\mu_{i+1}=k>0$.  The $i$-th
coordinate of the corresponding lattice point is
$q^{-\mu_i}t^{n-i}=q^{-k(n-i)}q^{k(n-i)}=1$.
\end{proof}

So for every positive integer $k$ the pair $\lambda=(2,0)\succeq\mu=(1,1)$ at
$x=(1,1)$ refutes Schur-convexity for the parameters $(q,q^k)$, and
Theorem~\ref{thm:macdonald} is the case $k=1$.  For an instance away from the Schur
locus take $k=2$, $q=1/2$, $t=1/4$: then $A=9/7$ and
\[
\Omega_{(2,0)}(1,1;\tfrac12,\tfrac14)=\frac{368}{155}<4=\Omega_{(1,1)}(1,1;\tfrac12,\tfrac14),
\]
a gap of $252/155$.

\subsection*{Where the claim goes wrong}

The failure is not an accident of small rank: it is the principal-specialization
normalization, which does not commute with the determinant shift.

\begin{lemma}[Determinant shift for $\Omega$]\label{lem:macdonald-shift}
Let $\lambda=(\lambda_1,\ldots,\lambda_N)$ and $c=\lambda_N$.  Then
\[
\Omega^{(N)}_\lambda(y;q,t)
=\left(\frac{y_1\cdots y_N}{t^{N(N-1)/2}}\right)^{c}
\Omega^{(N)}_{\lambda-c(1^N)}(y;q,t).
\]
\end{lemma}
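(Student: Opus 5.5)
The plan is to combine the classical determinant factorization of Macdonald polynomials with the principal specialization used in the normalization. First I would recall the standard identity (Macdonald, Ch.~VI): for $N$ variables and a partition $\lambda$ with $\lambda_N=c$,
\[
P_\lambda(y_1,\ldots,y_N;q,t)=(y_1\cdots y_N)^{c}\,P_{\lambda-c(1^N)}(y_1,\ldots,y_N;q,t).
\]
The cleanest route is through the defining characterization: $P_\lambda$ is the unique symmetric polynomial that is unitriangular in the monomial basis with respect to dominance and is an eigenvector of Macdonald's operator $D_N^1$. Multiplication by $e_N^c=(y_1\cdots y_N)^c$ sends $m_\mu$ to $m_{\mu+c(1^N)}$, which preserves dominance order and triangularity. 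It also commutes with $D_N^1$ up to the scalar shift $t$-power factor: $T_{q,y_i}$ multiplies $e_N^c$ by $q^c$, so $D_N^1 e_N^c = q^c e_N^c D_N^1$. Hence $e_N^cP_{\lambda-c(1^N)}$ is an eigenvector with the same triangularity, and so it equals $P_\lambda$. I would either cite this or include this short check.

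Next, I would evaluate both sides at the principal specialization $t^\delta=(t^{N-1},\ldots,t,1)$, where $e_N(t^\delta)=t^{0+1+\cdots+(N-1)}=t^{N(N-1)/2}$. This gives
\[
P_\lambda(t^\delta)=t^{cN(N-1)/2}\,P_{\lambda-c(1^N)}(t^\delta),
\]
and the right-hand factor is nonzero by the principal specialization formula, so the division defining $\Omega$ is legitimate. Dividing the identity at $y$ by the identity at $t^\delta$ yields exactly
\[
\Omega^{(N)}_\lambda(y)=\bigl(y_1\cdots y_N/t^{N(N-1)/2}\bigr)^c\,\Omega^{(N)}_{\lambda-c(1^N)}(y).
\]

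The only real obstacle is justifying the shift identity for $P_\lambda$ rigorously, rather than merely citing it. The eigen-operator argument above is the step I would check carefully, particularly the commutation $D_N^1 e_N = q\,e_N D_N^1$ and the distinctness of eigenvalues that guarantees uniqueness, which holds for generic $q,t$ and extends to $q,t\in(0,1)$ by continuity of the coefficients as rational functions. Everything after that is a direct substitution.
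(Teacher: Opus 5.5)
Your proposal is correct and is the paper's argument: apply $P_\lambda(y)=(y_1\cdots y_N)^cP_{\lambda-c(1^N)}(y)$ at $y$ and at $t^\delta$, where $(y_1\cdots y_N)^c$ becomes $t^{cN(N-1)/2}$, and divide. The paper only cites the shift identity, whereas your eigen-operator check and your remark that $P_{\lambda-c(1^N)}(t^\delta)\ne0$ fill it in; note that the commutation factor is the $q$-power $q^c$, as your formula says, not a ``$t$-power'' as your prose says.
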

\begin{proof}
From $P_\lambda(y)=(y_1\cdots y_N)^cP_{\lambda-c(1^N)}(y)$, evaluation at
$t^\delta$ contributes $\bigl(\prod_{i=1}^Nt^{N-i}\bigr)^c=t^{cN(N-1)/2}$, since
$\sum_{i=1}^N(N-i)=N(N-1)/2$.  Divide the two relations.
\end{proof}

The factor $t^{-cN(N-1)/2}$ is exactly what the argument for Schur-convexity
drops.  Its effect is not a perturbation.  At $N=2$ and $\lambda=(1,1)$, where
$c=1$ and $\lambda-c(1^2)=(0,0)$, the lemma gives
$\Omega_{(1,1)}(y)=y_1y_2/t$, whereas the shift without the $t$-power would give
$y_1y_2$; the two agree only at $t=1$.  Evaluated at $y=1^2$ this is the $1/t$
of Proposition~\ref{prop:macdonald-general}, which diverges as $t\downarrow0$ while
$\Omega_{(2,0)}(1,1;q,t)\to3+q$ stays bounded.  The reversal is therefore not
delicate: past a point the two sides are not even the same order of magnitude.

\subsection*{What is not refuted}

Normalizing at $1^n$ instead of at $t^\delta$ removes the phenomenon entirely.
Write $W_\lambda(x;q,t)=P_\lambda(x;q,t)/P_\lambda(1^n;q,t)$.  Since
$P_\lambda(1^N)=P_{\lambda-c(1^N)}(1^N)$, the determinant shift for $W$ carries
no extra factor at all, and $W_\lambda(1^n;q,t)=1$ for every $\lambda$ --- so the
witness above says nothing whatever about $W$, both sides being $1$ at the
evaluation point.  More than that, in the same rank-two case the $W$-inequality
holds where the $\Omega$-inequality fails:
\[
W_{(2,0)}(x_1,x_2;q,t)-W_{(1,1)}(x_1,x_2;q,t)
=\frac{x_1^2+x_2^2+Ax_1x_2-(2+A)x_1x_2}{2+A}
=\frac{(x_1-x_2)^2}{2+A}\;\ge\;0 .
\]
This is the rank-two shadow of a general fact on the Schur locus $q=t$, where
$W_\lambda$ is the normalized Schur function $s_\lambda(x)/s_\lambda(1^n)$: for
every $x$ with nonnegative entries, $\lambda\succeq\mu$ implies
$s_\lambda(x)/s_\lambda(1^n)\ge s_\mu(x)/s_\mu(1^n)$, which is the conjecture of
Cuttler, Greene and Skandera, proved by \citet{SraNormalizedSchur2016}.  So on
that locus the $1^n$-normalized ratio is Schur-convex in exactly the sense that
$\Omega_\lambda$ is not.
Theorem~\ref{thm:macdonald} is thus a statement about the normalization, not about
Macdonald polynomials on the lattice: what it refutes is Schur-convexity of
$\lambda\mapsto\Omega_\lambda$, and any corresponding claim for $W_\lambda$ is
untouched.

The certificate \path{counterexamples/macdonald-schur-convexity/verify.py}
recomputes all of this exactly: it recovers $A$ from the $(q,t)$ Hall inner
product rather than quoting Lemma~\ref{lem:macdonald-rank2}, verifies
\eqref{eq:macdonald-gap} as a rational-function identity in $q$ and $t$ together
with the factorization of $1+t^2+At$ that makes each sign visible, checks the
lattice membership of Lemma~\ref{lem:macdonald-lattice} symbolically in $q$, $k$ and
$n$, checks the shift exponent and the $N=2$ instance where the naive shift
fails, and confirms the $W$ identity above.

% ==== end tex/05-macdonald-omega.tex ====

% ==== tex/06-quantum-coupon-trace.tex ====
\section{Some refinements of the quantum coupon collection conjecture}
\label{sec:qcctrace}

Theorem~\ref{thm:quantum-coupon-collector} refutes the conjecture in the Loewner order,
which is what was asked.  The failure is not confined to that ordering.  Write
\[
\Delta_n(X_1,\ldots,X_n):=\tr Q_n(X_1,\ldots,X_n)
=\sum_{\emptyset\ne S\subseteq[n]}(-1)^{|S|-1}\tr X_S^{-1},
\]
with $X_S:=\sum_{i\in S}X_i$ as in the case.  A positive definite matrix has
positive trace, so the conjecture implies $\Delta_n>0$, and
Theorem~\ref{thm:qcc-positive-five} gives $\Delta_n>0$ for every $d$ and every $n\le5$.
This scalar consequence fails too, and for a reason one can see in closed form.

Let $u_1,\ldots,u_n\in\R^3$ be such that every pair is linearly independent and every triple spans $\R^3$, and put
\[
X_i(\varepsilon):=u_iu_i^{\mathsf T}+\varepsilon I_3,\qquad\varepsilon>0,
\]
so that $X_S(\varepsilon)=\sum_{i\in S}u_iu_i^{\mathsf T}+k\varepsilon I_3$ for $|S|=k$.  A singleton leaves two regularized null directions, so $\tr X_S(\varepsilon)^{-1}=2/\varepsilon+O(1)$; a pair leaves one, so $\tr X_S(\varepsilon)^{-1}=1/(2\varepsilon)+O(1)$; and for $k\ge3$ the unregularized matrix is already positive definite, so the trace of the inverse is $O(1)$.  Only finitely many subsets occur, so the remainders sum uniformly and
\begin{equation}\label{eq:qcc-asymptotic}
\Delta_n\bigl(X_1(\varepsilon),\ldots,X_n(\varepsilon)\bigr)
=\frac1\varepsilon\Bigl(2n-\tfrac12\binom n2\Bigr)+O(1)
=\frac{n(9-n)}{4\varepsilon}+O(1).
\end{equation}
The coefficient is negative exactly for $n\ge10$, which the next theorem realizes with an exact witness.

\begin{theorem}[\statusfalse: the trace consequence]\label{thm:quantum-coupon-collector-trace}
There are $d=3$, $n=10$ and $X_1,\ldots,X_{10}\in\mathbf{S}_{++}^{3}$ with $\Delta_{10}(X_1,\ldots,X_{10})<0$.  A fortiori $Q_{10}\succ0$ fails.
\end{theorem}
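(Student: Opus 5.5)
The plan is to realize the asymptotic \eqref{eq:qcc-asymptotic} at $n=10$, where the leading coefficient is $10(9-10)/4=-5/2$. The witness will be $X_i=u_iu_i^{\mathsf T}+\varepsilon I_3$, with ten integer vectors $u_1,\ldots,u_{10}\in\mathbb Z^3$ in general position and a small rational $\varepsilon$, for instance $\varepsilon=1/1000$. For general position I would take points on the moment curve, $u_i=(1,i,i^2)$ for $i=1,\ldots,10$. Every triple is then a Vandermonde matrix with distinct nodes, so it is nonsingular and every pair is independent. Each $X_i\succeq\varepsilon I_3\succ0$, so the hypotheses hold exactly. Once the parameters are fixed, $\Delta_{10}$ is a finite sum of $1023$ rational numbers $\pm\tr X_S^{-1}$, each computed by an exact $3\times3$ inversion. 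So the theorem reduces to a single exact rational evaluation, which a verifier performs. The human-readable part of the argument should explain why the sign is negative and why $\varepsilon$ is small enough.

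To make this explanation rigorous, I would bound the $O(1)$ remainder explicitly. This avoids relying on the computation alone. For a singleton, $X_i^{-1}$ has eigenvalue $1/\varepsilon$ with multiplicity two and eigenvalue $1/(\|u_i\|^2+\varepsilon)$ once. Hence $\tr X_i^{-1}=2/\varepsilon+1/(\|u_i\|^2+\varepsilon)$ exactly, and the remainder is at most $1/\|u_i\|^2\le1/3$.

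For a pair $S=\{i,j\}$, the matrix $u_iu_i^{\mathsf T}+u_ju_j^{\mathsf T}$ has a null eigenvalue and two positive eigenvalues $\mu_\pm$, with $\mu_+\mu_-=\|u_i\|^2\|u_j\|^2-\langle u_i,u_j\rangle^2=\|u_i\times u_j\|^2$. Therefore $\tr X_S^{-1}=1/(2\varepsilon)+1/(\mu_++2\varepsilon)+1/(\mu_-+2\varepsilon)$. The remainder is at most $(\mu_++\mu_-)/(\mu_+\mu_-)=(\|u_i\|^2+\|u_j\|^2)/\|u_i\times u_j\|^2$, which is bounded by an explicit integer ratio.

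For $|S|=k\ge3$, the inverse satisfies $0\prec X_S^{-1}\preceq(\sum_{i\in S}u_iu_i^{\mathsf T})^{-1}$. Its trace is bounded by the sum of the cofactors of $\sum_{i\in S} u_iu_i^{\mathsf T}$ divided by its determinant. By Cauchy--Binet, this determinant is at least the square of any triple determinant inside $S$, and that is at least $1$ for integer vectors. Taking signs into account, only the subsets with $|S|\ge3$ and $|S|$ even contribute negative terms. The odd ones and the singleton remainders only help positivity, so they must be bounded above. I would bound them crudely by a constant $M$ computed from these estimates.

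The conclusion is then $\Delta_{10}\le -\tfrac{5}{2\varepsilon}+M$, and choosing $\varepsilon<5/(2M)$ finishes the proof. The main obstacle is that $M$ is the sum of roughly a thousand crude bounds, and with vectors like $(1,i,i^2)$ the Gram entries grow and some triples are poorly conditioned. The crude bound could therefore force a very tiny $\varepsilon$. That is harmless for exact arithmetic, but it would make the clean statement less transparent.

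My first attempt would therefore be the direct exact computation at a moderate $\varepsilon$ such as $10^{-3}$ or $10^{-4}$. The analytic bound would serve only to guarantee that some rational $\varepsilon$ works. The reported certificate would be the exact rational value of $\Delta_{10}$, shown to lie strictly below zero.
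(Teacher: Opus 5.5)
Your proposal is correct and uses the same witness family as the paper. The paper takes $u_t=(1,t,t^2)^{\mathsf T}$ for $t=0,\ldots,9$ and $\varepsilon=1/10000$. It certifies $-13901<\Delta_{10}<-13900$ purely by exact rational evaluation of the $1023$ terms; the asymptotic \eqref{eq:qcc-asymptotic} serves only as motivation.

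What you add is a one-sided estimate that makes the existence claim independent of the computer. Your steps are as follows.
\begin{itemize}
\item Keep the exact singular parts $2/\varepsilon$ and $1/(2\varepsilon)$.
\item Bound the singleton remainders by $\sum_i\|u_i\|^{-2}$.
\item Discard every term that enters with a minus sign. This includes the pair remainders, so your explicit bound $(\|u_i\|^2+\|u_j\|^2)/\|u_i\times u_j\|^2$ on those is not actually needed. It also means your remark that only even $|S|\ge3$ contribute negative terms slightly misstates things.
\item Bound each odd $|S|\ge3$ term by $\operatorname{tr}G_S^{-1}$, where $G_S=\sum_{i\in S}u_iu_i^{\mathsf T}\preceq X_S$. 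Cauchy--Binet makes this explicit.
\end{itemize}
Together these give $\Delta_{10}\le-\tfrac{5}{2\varepsilon}+M$ with $M$ independent of $\varepsilon$. That proves the theorem as stated; the a fortiori clause is just $\operatorname{tr}Q_{10}=\Delta_{10}<0$.

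The trade-off is that your estimate only certifies very small $\varepsilon$. Already the triple of nodes $8,9,10$ has $\operatorname{tr}G_S^{-1}=40836/4=10209$, so $5/(2M)<2.5\times10^{-4}$. The paper's own numbers tell the same story: a value near $-13900$ against a leading term of $-25000$ means the net ``$O(1)$'' remainder is about $+11100$ at $\varepsilon=10^{-4}$. So a first trial at $\varepsilon=10^{-3}$ would be supported only by the exact computation, not by your bound.

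In short, the paper's route yields a concrete, moderately sized certificate, while yours yields a human-checkable existence proof with no specific practical $\varepsilon$ attached.
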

\begin{proof}
Take $d=3$, $n=10$, $\varepsilon=1/10000$, and, for $t=0,1,\ldots,9$,
\[
u_t=(1,t,t^2)^{\mathsf T},\qquad X_{t+1}=u_tu_t^{\mathsf T}+\varepsilon I_3.
\]
Every $X_i$ is strictly positive definite, and every three of the $u_t$ are independent because their determinant is a nonzero Vandermonde product; the matrices are noncommuting, the $(1,2)$ entry of $[X_1,X_2]$ being $1$.  Exact rational evaluation of all $2^{10}-1=1023$ nonempty-subset terms gives
\[
-13901<\Delta_{10}(X_1,\ldots,X_{10})<-13900.
\]
The supplied verifier computes this with exact rational arithmetic and records the full numerator and denominator in \texttt{artifacts/certificate.json}.
\end{proof}

\begin{remark}[Scope]
The record for the trace form is coarser than for the conjecture itself: true
for $n\le5$ by Theorem~\ref{thm:qcc-positive-five}, false at $n=10$, and undecided for
$6\le n\le9$.  Equation \eqref{eq:qcc-asymptotic}, whose coefficient vanishes at
$n=9$, says nothing there, since that construction is only one family; and a
$Q_n$ that is not positive semidefinite, such as the one of
Theorem~\ref{thm:quantum-coupon-collector}, can still have positive trace.
\end{remark}

% ==== end tex/06-quantum-coupon-trace.tex ====

% ==== tex/08-lorentzian-audit.tex ====
\section{Two new results for hyperbolic polynomials (author: S.~Sra)}
\label{sec:lorentzianaudit}
\noindent\dblue{\emph{These results are non-AI (by S.~Sra); including here to share with interested folks.}}

\vskip8pt

Replacing ``Lorentzian'' by ``hyperbolic'' turns the refuted principle into a
theorem.  Let $p:\R^n\to\R$ be homogeneous of degree $m\ge1$ and hyperbolic with
respect to $e$, normalized so that $p(e)>0$, and let $\Lambda_{++}$ be the open
hyperbolicity cone containing $e$.  By \citet{Garding1959},
$\Lambda_{++}$ is an open convex cone, $p$ is hyperbolic with respect to each of
its points, and $F:=p^{1/m}$ is positive, homogeneous of degree one and concave
on it.  The other external input is the Helton--Vinnikov theorem
\citep{HeltonVinnikov2007,LewisParriloRamana2005}, in the normalized form: a
ternary form $q$ of degree $m$, hyperbolic with respect to $\mathbf 1=(1,1,1)$
with $q(\mathbf 1)>0$, admits real symmetric $A_1,A_2,A_3$ with
$A_1+A_2+A_3=I_m$ and $q(r)=q(\mathbf 1)\det(r_1A_1+r_2A_2+r_3A_3)$.  Three
points at a time is exactly what a triangle inequality needs.

\begin{lemma}[Ternary restriction]\label{lem:lor-ternary}
Fix $x_1,x_2,x_3\in\Lambda_{++}$, put $T(r):=r_1x_1+r_2x_2+r_3x_3$ and
$q:=p\circ T$.  Then $q$ is homogeneous of degree $m$ and hyperbolic with
respect to $\mathbf 1$, and each coordinate vector $\varepsilon_i\in\R^3$ lies in
the hyperbolicity cone of $q$ containing $\mathbf 1$.
\end{lemma}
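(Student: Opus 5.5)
The plan is to verify the three claims of Lemma~\ref{lem:lor-ternary} in turn: homogeneity, hyperbolicity of $q$ in the direction $\mathbf 1$, and membership of each $\varepsilon_i$ in the cone of $q$ containing $\mathbf 1$. Homogeneity is immediate because $T$ is linear and $p$ is homogeneous of degree $m$. Hence $q(sr)=p(sT(r))=s^mq(r)$.

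For hyperbolicity, set $e':=T(\mathbf 1)=x_1+x_2+x_3$. This point lies in $\Lambda_{++}$ since the cone is convex and closed under positive scaling (G\aa rding). First, $q(\mathbf 1)=p(e')>0$. By G\aa rding, $p$ is hyperbolic with respect to $e'$. Fix $r\in\R^3$. Then
\[
q(r-s\mathbf 1)=p\bigl(T(r)-se'\bigr)
\]
is a univariate polynomial in $s$ whose roots are all real, by hyperbolicity of $p$ in direction $e'$ at the point $T(r)$. Its degree is exactly $m$, with leading coefficient $(-1)^mp(e')\neq0$. So $q$ is hyperbolic with respect to $\mathbf 1$.

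For the cone membership, I would use the standard characterization: the hyperbolicity cone of $q$ containing $\mathbf 1$ is the set of $r$ for which all roots of $s\mapsto q(r-s\mathbf 1)$ are strictly positive. Equivalently, it is the connected component of $\{q\neq0\}$ containing $\mathbf 1$. With that characterization, the cleanest route is a segment argument. Consider the segment $r(\tau)=(1-\tau)\mathbf 1+\tau\varepsilon_i$ for $\tau\in[0,1]$. Its image under $T$ is
\[
T(r(\tau))=(1-\tau)e'+\tau x_i .
\]
This is a convex combination of two points of $\Lambda_{++}$, so it stays in $\Lambda_{++}$, where $p>0$. Hence $q>0$ along the whole segment. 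The segment therefore joins $\mathbf 1$ to $\varepsilon_i$ inside $\{q\neq0\}$, placing $\varepsilon_i$ in the component containing $\mathbf 1$.

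A more direct alternative avoids the component characterization. The roots of $s\mapsto q(\varepsilon_i-s\mathbf 1)=p(x_i-se')$ are the eigenvalues of $x_i$ relative to $e'$. These are all positive because $x_i\in\Lambda_{++}$ and $p$ is hyperbolic with respect to $e'$, again by G\aa rding's theorem that the cone is independent of the chosen interior direction. I would present this version, since it is one line given G\aa rding.

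The only step I expect to need care is citing precisely the facts used from \citet{Garding1959}. These are: (i) the cone of $p$ with respect to $e'$ coincides with $\Lambda_{++}$; and (ii) membership in the open cone is equivalent to strict positivity of all eigenvalues. Both are exactly the standard statements the paper already invokes, so no real obstacle remains. Everything else is bookkeeping with the linear map $T$.
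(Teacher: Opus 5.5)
Your proof is correct, and your segment argument is the paper's: the paper runs the same segment, written as $T(\gamma_i(t))=x_i+t\sum_{j\ne i}x_j$, and concludes that $\varepsilon_i$ lies in the positive component of $\{q\neq0\}$ containing $\mathbf 1$. The one-line variant you say you would present instead reads the roots of $s\mapsto q(\varepsilon_i-s\mathbf 1)=p(x_i-se')$ as the eigenvalues of $x_i$ relative to $e'$; it is equally valid and simply replaces the connected-component characterization of the cone with G\aa rding's theorem that the cone does not depend on the interior direction chosen.
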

\begin{proof}
Homogeneity is immediate, and $w:=T(\mathbf 1)=x_1+x_2+x_3\in\Lambda_{++}$
because the cone is convex.  So $p$ is hyperbolic with respect to $w$, and for
every $r\in\R^3$ the univariate $\lambda\mapsto q(r+\lambda\mathbf 1)
=p(T(r)+\lambda w)$ has only real zeros; that is hyperbolicity with respect to
$\mathbf 1$.  For the second claim, the path
$\gamma_i(t)=(1-t)\varepsilon_i+t\mathbf 1$ satisfies
$T(\gamma_i(t))=x_i+t\sum_{j\ne i}x_j\in\Lambda_{++}$, so $q(\gamma_i(t))>0$ on
$[0,1]$: it joins $\varepsilon_i$ to $\mathbf 1$ inside the positive component.
\end{proof}

\begin{lemma}[Positive definite representatives]\label{lem:lor-pdrep}
In that setting the matrices $A_1,A_2,A_3$ supplied by Helton--Vinnikov are
positive definite, and for all $i,j$,
\[
p(x_i)=q(\mathbf 1)\det A_i,\qquad
p\Bigl(\frac{x_i+x_j}2\Bigr)=q(\mathbf 1)\det\Bigl(\frac{A_i+A_j}2\Bigr).
\]
\end{lemma}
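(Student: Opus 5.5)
The identities come first, then positive definiteness. Set $M(r):=r_1A_1+r_2A_2+r_3A_3$, so that $q(r)=q(\mathbf 1)\det M(r)$ for all $r\in\R^3$. Since $q=p\circ T$ and $T(\varepsilon_i)=x_i$, evaluating at $r=\varepsilon_i$ gives $p(x_i)=q(\varepsilon_i)=q(\mathbf 1)\det A_i$. Evaluating at $r=(\varepsilon_i+\varepsilon_j)/2$ gives $T(r)=(x_i+x_j)/2$ and $M(r)=(A_i+A_j)/2$, which is the second identity. These hold for any real symmetric representatives, so the only substantive claim is $A_i\succ0$.

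For positive definiteness I will use Lemma~\ref{lem:lor-ternary}, which says that $\varepsilon_i$ lies in the hyperbolicity cone of $q$ containing $\mathbf 1$. Along the segment $\gamma_i(t)=(1-t)\varepsilon_i+t\mathbf 1$, the matrix $M(\gamma_i(t))=(1-t)A_i+tI_m$ is real symmetric, and it equals $I_m\succ0$ at $t=1$. Its determinant is $q(\gamma_i(t))/q(\mathbf 1)$. By the proof of Lemma~\ref{lem:lor-ternary} this is strictly positive on $[0,1]$, because $T(\gamma_i(t))\in\Lambda_{++}$ and $p>0$ there. So no eigenvalue of $M(\gamma_i(t))$ ever crosses zero along the path. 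The eigenvalues depend continuously on $t$ and all equal $1$ at $t=1$, so they stay positive for all $t\in[0,1]$. At $t=0$ this gives $A_i\succ0$.

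Two points in this argument need checking, and neither looks hard. The first is that $q(\mathbf 1)>0$, which is required for the normalized Helton--Vinnikov form. This holds because $q(\mathbf 1)=p(w)$ and $w\in\Lambda_{++}$, where $p$ is positive since $p(e)>0$ and $p$ does not vanish on the connected open cone. The second is that positivity of $p$ on all of $\Lambda_{++}$ is what gives $\det M(\gamma_i(t))>0$ along the whole path, and not merely $\det M(\gamma_i(t))\neq0$. Either version is enough for the continuity argument, since non-vanishing already prevents an eigenvalue from crossing zero. I expect the main obstacle to be only this bookkeeping: making sure that the path stays in the region where $\det M\ne0$, which the convexity of $\Lambda_{++}$ guarantees.
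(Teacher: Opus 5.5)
Your proof is correct and is essentially the paper's argument. Both restrict the representation to the segment $\gamma_i(t)=(1-t)\varepsilon_i+t\mathbf 1$, use $q(\gamma_i(t))>0$ to keep the pencil nonsingular, and pass from $I_m$ at $t=1$ to $A_i\succ0$ at $t=0$ by continuity of eigenvalues. The two identities are read off from the representation at $\varepsilon_i$ and at $(\varepsilon_i+\varepsilon_j)/2$. Writing the path explicitly as $(1-t)A_i+tI_m$ makes the eigenvalue step transparent: $(1-t)\lambda+t\neq0$ for all $t\in[0,1]$ holds exactly when $\lambda>0$.
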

\begin{proof}
Along the path of Lemma~\ref{lem:lor-ternary},
$L_i(t):=\sum_k\gamma_i(t)_kA_k$ satisfies
$q(\gamma_i(t))=q(\mathbf 1)\det L_i(t)>0$, so $L_i(t)$ is nonsingular on
$[0,1]$; the eigenvalues of a continuous path of nonsingular real symmetric
matrices cannot change sign, so the inertia is constant.  At $t=1$,
$L_i(1)=A_1+A_2+A_3=I_m\succ0$, hence $A_i=L_i(0)\succ0$.  The two identities
are the determinantal representation evaluated at $\varepsilon_i$ and at
$(\varepsilon_i+\varepsilon_j)/2$.
\end{proof}

\begin{theorem}[\statusproved: hyperbolic Jensen--Stein pseudometric]\label{thm:lor-hyperbolic}
Let $p$ be hyperbolic as above.  Then $\sqrt{\J_p}$, with
$\J_p(x,y)=\log p\bigl(\frac{x+y}2\bigr)-\frac12\log p(x)-\frac12\log p(y)$, is a
pseudometric on $\Lambda_{++}$.
\end{theorem}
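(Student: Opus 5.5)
The plan is to reduce the triangle inequality, which involves only three points at a time, to the known metric property of the S-divergence on positive definite matrices \citep{SraSdiv}, using Lemmas~\ref{lem:lor-ternary} and~\ref{lem:lor-pdrep}.

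First, the easy pseudometric axioms. Symmetry of $\J_p$ is evident. Nonnegativity follows from concavity of $F=p^{1/m}$ on $\Lambda_{++}$ (G\aa rding): concavity gives $F\bigl(\frac{x+y}2\bigr)\ge\frac12F(x)+\frac12F(y)$, and AM--GM bounds the right side below by $\sqrt{F(x)F(y)}$. Taking $m\log$ yields $\J_p\ge0$. Also $\J_p(x,x)=0$. Since only a pseudometric is claimed, we need not show that $\J_p(x,y)=0$ forces $x=y$; this indeed fails, for example along rays or in lineality directions.

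The main step is the triangle inequality. Fix $x_1,x_2,x_3\in\Lambda_{++}$ and form $q=p\circ T$ as in Lemma~\ref{lem:lor-ternary}. That lemma makes $q$ hyperbolic with respect to $\mathbf 1$ with $q(\mathbf 1)=p(x_1+x_2+x_3)>0$, so the normalized Helton--Vinnikov theorem applies. It supplies real symmetric $A_1,A_2,A_3$, and Lemma~\ref{lem:lor-pdrep} shows they are positive definite with
\[
p(x_i)=q(\mathbf 1)\det A_i,\qquad p\Bigl(\frac{x_i+x_j}2\Bigr)=q(\mathbf 1)\det\Bigl(\frac{A_i+A_j}2\Bigr).
\]
Substituting into $\J_p$, the constant $\log q(\mathbf 1)$ appears with total coefficient $1-\frac12-\frac12=0$ and cancels. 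Hence
\[
\J_p(x_i,x_j)=\log\det\Bigl(\frac{A_i+A_j}2\Bigr)-\frac12\log\det A_i-\frac12\log\det A_j=S(A_i,A_j),
\]
which is the S-divergence, i.e.\ the squared Stein distance, on $m\times m$ positive definite matrices. By \citep{SraSdiv}, $\sqrt{S}$ is a metric on the positive definite cone. Therefore
\[
\sqrt{\J_p(x_1,x_3)}\le\sqrt{\J_p(x_1,x_2)}+\sqrt{\J_p(x_2,x_3)},
\]
and since the triple was arbitrary, the triangle inequality holds in general.

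The main obstacle is that Helton--Vinnikov only represents ternary forms. The identity above holds for one triple at a time, with matrices that depend on the triple, so a global embedding of $\Lambda_{++}$ into the positive definite cone is not available. This is harmless, because the triangle inequality only ever involves three points.

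One technical point needs care. If some $x_i$ coincide, or the $x_i$ are linearly dependent, then $q$ may have $\deg q=m$ with representation size still $m$ but a degenerate structure. The normalized Helton--Vinnikov statement still applies, since it requires only hyperbolicity and $q(\mathbf 1)>0$, not smoothness. I would rely on that, with a continuity fallback if needed: perturb the triple slightly and pass to the limit, using continuity of $\J_p$ on $\Lambda_{++}$.
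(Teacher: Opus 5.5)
Your proposal is correct and follows the paper's proof essentially step for step: nonnegativity comes from concavity of $p^{1/m}$ plus AM--GM, and the triangle inequality comes from applying Lemmas~\ref{lem:lor-ternary} and~\ref{lem:lor-pdrep} to each triple, cancelling $q(\mathbf 1)$, and invoking the metric property of the S-divergence. One slip in an aside, which plays no role in the argument: $\J_p(x,tx)=m\log\frac{1+t}{2\sqrt t}>0$ for $t\ne1$, so $\J_p$ does not degenerate along rays, and lineality directions are the only valid example you gave.
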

\begin{proof}
Concavity of $F=p^{1/m}$ gives
$F(\frac{x+y}2)\ge\frac{F(x)+F(y)}2\ge\sqrt{F(x)F(y)}$, which after raising to
the $m$th power makes $\J_p\ge0$; symmetry and vanishing on the diagonal are
immediate.  For the triangle inequality fix $x_1,x_2,x_3\in\Lambda_{++}$ and take
$A_1,A_2,A_3\succ0$ from Lemma~\ref{lem:lor-pdrep}.  The factor $q(\mathbf 1)$
cancels, leaving
\[
\J_p(x_i,x_j)
=\log\frac{\det\bigl((A_i+A_j)/2\bigr)}{\sqrt{\det A_i\det A_j}}
=\delta_{\mathrm S}^2(A_i,A_j),
\]
the squared Stein divergence, whose square root is a metric on the positive
definite cone \citep{SraSdiv}.  Applying that triangle inequality to
$A_1,A_2,A_3$ gives the claim, and the triple was arbitrary.
\end{proof}

\noindent Theorem~\ref{thm:lor-fourpoint} below proves a conjecture once shared with the author (S.~Sra) by~\citet{thomas2025}.
\begin{theorem}[\statusproved]\label{thm:lor-fourpoint}
With $F=p^{1/m}$, for all $a,b,c$ in the closed hyperbolicity cone,
\[
F(a+b)\,F(a+c)\;\ge\;F(a)\,F(a+b+c)+F(b)\,F(c).
\]
\end{theorem}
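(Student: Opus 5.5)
The plan is to reduce Theorem~\ref{thm:lor-fourpoint} to a determinantal inequality, exactly as in the proof of Theorem~\ref{thm:lor-hyperbolic}, and then to prove that inequality for positive definite matrices.

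\emph{Reduction.} First work in the open cone $\Lambda_{++}$; the closed cone follows at the end by replacing $a,b,c$ with $a+\epsilon e$, $b+\epsilon e$, $c+\epsilon e$ and letting $\epsilon\downarrow0$, since $F$ is continuous. For $a,b,c\in\Lambda_{++}$, apply Lemma~\ref{lem:lor-ternary} with $(x_1,x_2,x_3)=(a,b,c)$ to obtain the ternary form $q=p\circ T$. Lemma~\ref{lem:lor-pdrep} then gives $A_1,A_2,A_3\succ0$ of size $m$ with
\[
p(r_1a+r_2b+r_3c)=q(\mathbf 1)\det(r_1A_1+r_2A_2+r_3A_3)
\]
at every $r$ with $0/1$ entries. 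That lemma states the representation only at the vectors $\varepsilon_i$ and $(\varepsilon_i+\varepsilon_j)/2$. Here I need it also at $(1,1,0)$, $(1,0,1)$ and $\mathbf 1$, but it holds there by the same Helton--Vinnikov identity. Taking $m$th roots, the factor $q(\mathbf 1)^{1/m}$ appears once in each of the four products and cancels. The theorem therefore becomes the matrix statement
\[
\det(A+B)^{1/m}\det(A+C)^{1/m}\ \ge\ \det(A)^{1/m}\det(A+B+C)^{1/m}+\det(B)^{1/m}\det(C)^{1/m}
\]
for $A,B,C\succ0$. Each term is multiplied by $\det(A)^{2/m}$ under the congruence $X\mapsto A^{1/2}XA^{1/2}$, so I may assume $A=I$.

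\emph{Matrix inequality.} Set $P=I+B$ and $Q=I+C$, and use $\det P\det Q=\det(P^{1/2}QP^{1/2})$. The natural tool is Minkowski's determinant inequality, $\det(X+Y)^{1/m}\ge\det X^{1/m}+\det Y^{1/m}$ for $X,Y\succeq0$. I would look for a splitting
\[
P^{1/2}QP^{1/2}=X+Y,\qquad X\succeq I+B+C\ (\text{or at least }\det X\ge\det(I+B+C)),\qquad \det Y\ge\det B\det C.
\]
The first candidate is
\[
X=I+B+C,\qquad Y=P^{1/2}CP^{1/2}-C+B^{1/2}(\cdot)B^{1/2}\text{-type corrections}.
\]
The commutative case suggests the right bookkeeping. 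There the identity $(1+b)(1+c)-(1+b+c)=bc$ splits as the sum of $bc$ and nothing else, so one wants $Y$ to be congruent to something like $B^{1/2}CB^{1/2}$. Concretely, I would try
\[
X=I+B+C+\text{(a PSD cross term)},\qquad Y=B^{1/2}CB^{1/2},
\]
and verify that $P^{1/2}QP^{1/2}-B^{1/2}CB^{1/2}\succeq I+B+C$. If that fails, I would instead compare $\det P\det Q=\det\bigl(I+B+C+\tfrac12(BC+CB)\bigr)$-type expressions via the $2\times2$ block matrix $\begin{bsmallmatrix}P&I\\ I&Q\end{bsmallmatrix}$ and a Schur-complement form of Minkowski.

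\emph{Main obstacle.} The hard step is the noncommutative matrix inequality. Naive chains, such as Minkowski applied to $P+P^{1/2}CP^{1/2}$ followed by $\det(P)^{1/m}\ge1+\det(B)^{1/m}$, end up needing $\det(I+B+C)^{1/m}\le\det(P)^{1/m}+\det(C)^{1/m}$. That is the wrong direction, so the cross term must be handled exactly rather than bounded crudely.

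\emph{Fallback.} If no clean splitting works, I would avoid matrices and argue directly with hyperbolic polynomials. Fix $a,b$ and view both sides as functions of $c$ along the ray $c=s\,c_0$. The difference is concave-minus-linear in a suitable sense, and the inequality at $s=0$ is trivial because it reads $F(a+b)F(a)\ge F(a)F(a+b)$. I would then use G\aa rding concavity of $F$ together with the interlacing of the roots of $t\mapsto p(a+tb)$ and $t\mapsto p(a+c+tb)$ to control the derivative in $s$.
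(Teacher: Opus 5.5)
Your reduction is sound and matches the paper's. The Helton--Vinnikov identity $p(ra+sb+tc)=q(\mathbf 1)\det(rA+sB+tC)$ holds for all real $r,s,t$, not only at the points listed in Lemma~\ref{lem:lor-pdrep}. The factor $q(\mathbf 1)^{2/m}$ cancels, congruence normalizes $A=I$, and $u\mapsto u+\varepsilon e$ handles the closed cone.

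The gap is the matrix inequality $\det(I+B)^{1/m}\det(I+C)^{1/m}\ge\det(I+B+C)^{1/m}+\det(B)^{1/m}\det(C)^{1/m}$. This is the substantive step, you never prove it, and the splitting you propose cannot work. Work in an eigenbasis of $B=\operatorname{diag}(b_1,\ldots,b_m)$ and put $E:=P^{1/2}QP^{1/2}-B^{1/2}CB^{1/2}-(I+B+C)$. Then
\[
E_{ij}=C_{ij}\Bigl(\sqrt{(1+b_i)(1+b_j)}-1-\sqrt{b_ib_j}\Bigr),
\]
so $E$ has zero diagonal, and a positive semidefinite matrix with zero diagonal vanishes. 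By Cauchy--Schwarz the bracket is strictly positive whenever $b_i\ne b_j$, so $E\succeq0$ holds exactly when $BC=CB$. That is only the commutative case, where the theorem is trivial.

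Your alternatives are not arguments yet. The expression $\det\bigl(I+B+C+\tfrac12(BC+CB)\bigr)$ is not $\det P\det Q$ in general; the true identity is $\det P\det Q=\det(I+B+C+BC)$, whose argument is nonsymmetric. In the ray argument, both sides are concave in $s$, so G\aa rding concavity gives no sign for their difference.

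The missing idea is to split a diagonal matrix with the same determinant, rather than the matrix $P^{1/2}QP^{1/2}$. Since $\det(I+B)\det(I+C)$ depends only on the two spectra separately, you may pair them as you like. Take $x_1\ge\cdots\ge x_m$ for $B$ and $y_1\le\cdots\le y_m$ for $C$. Then
\[
\det(I+B)\det(I+C)=\prod_{i=1}^m\bigl[(1+x_i+y_i)+x_iy_i\bigr].
\]
Superadditivity of the geometric mean (Minkowski for diagonal matrices) bounds the $m$th root of this below by $\bigl(\prod_i(1+x_i+y_i)\bigr)^{1/m}+\bigl(\prod_ix_iy_i\bigr)^{1/m}$. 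The second term is $\det(B)^{1/m}\det(C)^{1/m}$.

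For the first term, use Fiedler's upper bound for the determinant of a sum of two positive semidefinite matrices. Applied to $I+B$ and $C$ with oppositely ordered spectra, it gives $\det(I+B+C)\le\prod_i(1+x_i+y_i)$. That is the paper's proof: Fiedler's inequality absorbs all of the noncommutativity, and it is the key lemma your proposal lacks.
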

\begin{proof}
Write $\mathcal D(M)=\det(M)^{1/m}$.  We first prove the matrix form
$\mathcal D(A+B)\mathcal D(A+C)\ge\mathcal D(A)\mathcal D(A+B+C)+\mathcal D(B)\mathcal D(C)$
for $A,B,C\succeq0$.  For $A\succ0$, put $X=A^{-1/2}BA^{-1/2}$ and
$Y=A^{-1/2}CA^{-1/2}$; every term carries $\mathcal D(A)^2$, so it suffices to
show $\mathcal D(I+X)\mathcal D(I+Y)\ge\mathcal D(I+X+Y)+\mathcal D(X)\mathcal D(Y)$.
Order the eigenvalues of $X$ decreasingly and those of $Y$ increasingly.  Since a
product does not depend on the ordering,
$\mathcal D(I+X)\mathcal D(I+Y)=\bigl(\prod_i[(1+x_i+y_i)+x_iy_i]\bigr)^{1/m}$, and
Minkowski's determinant inequality
$\det(R+S)^{1/m}\ge\det R^{1/m}+\det S^{1/m}$ --- itself Jensen's inequality for
$t\mapsto\log(1+e^t)$ applied to the eigenvalues of $R^{-1/2}SR^{-1/2}$ ---
applied to the diagonal matrices $R=\diag(1+x_i+y_i)$ and $S=\diag(x_iy_i)$
gives
\[
\mathcal D(I+X)\mathcal D(I+Y)\ \ge\ \Bigl(\prod_i(1+x_i+y_i)\Bigr)^{1/m}
+\Bigl(\prod_ix_iy_i\Bigr)^{1/m}.
\]
Fiedler's upper bound \citep{Fiedler1971}, which pairs one decreasing spectrum
with the other increasing one, gives $\det(I+X+Y)\le\prod_i(1+x_i+y_i)$, while
the second term is $\mathcal D(X)\mathcal D(Y)$; a limit $\varepsilon\downarrow0$
in $A+\varepsilon I$ covers singular $A$.  For the cone statement, apply
Lemmas~\ref{lem:lor-ternary} and~\ref{lem:lor-pdrep} to $a,b,c\in\Lambda_{++}$ to
get $A,B,C\succ0$ with $p(ra+sb+tc)=q(\mathbf 1)\det(rA+sB+tC)$; then
$F(ra+sb+tc)=q(\mathbf 1)^{1/m}\mathcal D(rA+sB+tC)$ for $r,s,t\ge0$, and the
displayed inequality is the matrix one times $q(\mathbf 1)^{2/m}$.  Points of the
closed cone are handled by $u\mapsto u+\varepsilon e$, which lies in
$\Lambda_{++}$, and continuity.
\end{proof}

Nothing here collides with Theorem~\ref{thm:volume-distance} and
Corollary~\ref{thm:lorentzian}, and it is worth saying why not.  The witness
\eqref{eq:lorentzian-cubic} is strictly Lorentzian but \emph{not} hyperbolic:
$G(x,1)=\frac1{10}x^3+\frac32x^2+\frac{11}5x+1$ has discriminant
\[
18abcd-4b^3d+b^2c^2-4ac^3-27a^2d^2=-\frac{1499}{1250}<0
\qquad
\Bigl(a,b,c,d=\tfrac1{10},\tfrac32,\tfrac{11}5,1\Bigr),
\]
so it has one real and two complex conjugate roots.  A binary form is hyperbolic
with respect to some direction only if it splits into real linear factors --- each
factor $\ell_i$ contributes the root $-\ell_i(x)/\ell_i(e)$, which is real for
every real $x$ only when $\ell_i$ is real up to scale --- so $G$ is hyperbolic
with respect to no direction at all.  In two variables hyperbolicity is therefore
strictly stronger than strict Lorentzianity, and the witness lives in the gap.
That gap is the content of both results together: what makes the midpoint Jensen
gap a squared distance is the determinantal representation that hyperbolicity
buys through Helton--Vinnikov, and not log-concavity of the coefficients.

% ==== end tex/08-lorentzian-audit.tex ====

\cxendrule   %
\vskip 12pt

\end{document}